\DeclareMathOperator*{\argmin}{argmin}
\newcommand{\norm}[1]{\left\lVert#1\right\rVert}
\DeclarePairedDelimiter\abs{\lvert}{\rvert}
\newcounter{casenum}
\newenvironment{caseof}{\setcounter{casenum}{1}}{\vskip.5\baselineskip}
\newcommand{\case}[2]{\vskip.5\baselineskip\par\noindent {\bfseries Case \arabic{casenum}:} #1\\#2\addtocounter{casenum}{1}}
\newcommand{\cmark}{\ding{51}}%
\setlist[enumerate]{leftmargin=.5in}
\setlist[itemize]{leftmargin=.5in}
\crefname{hypothesis}{Hypothesis}{Hypotheses}
\title{Centering Data Improves the Dynamic Mode Decomposition}
\author{Seth M. Hirsh\thanks{Department of Physics, University of Washington, Seattle, WA 
  (\email{hirshs@uw.edu}).}
\and Kameron Decker Harris\footnotemark[4]~\thanks{Paul G. Allen School of Computer Science, University of Washington, Seattle, WA 
  (\email{kamdh@uw.edu}).}
\and J. Nathan Kutz\thanks{Department of Applied Mathematics, University of Washington, Seattle, WA 
(\email{kutz@uw.edu})}
\and Bingni W. Brunton\thanks{Department of Biology, University of Washington, Seattle, WA 
  (\email{kamdh@uw.edu},\email{bbrunton@uw.edu}).}
}
\DeclarePairedDelimiterX\set[1]\lbrace\rbrace{#1}
\newcommand{\R}{\mathbb{R}}
\newcommand{\A}{\bar{\bm{A}}}
\begin{document}
\maketitle

\begin{abstract}
Dynamic mode decomposition (DMD) is a data-driven method that models high-dimensional time series as a sum of spatiotemporal modes, where the temporal modes are constrained by linear dynamics.
For nonlinear dynamical systems exhibiting strongly coherent structures, DMD can be a useful approximation to extract dominant, interpretable modes.
In many domains with large spatiotemporal data---including fluid dynamics, video processing, and finance---the dynamics of interest are often perturbations about fixed points or equilibria, which motivates the application of DMD to centered (i.e. mean-subtracted) data.
In this work, we show that DMD with centered data is equivalent to incorporating an affine term in the dynamic model and is not equivalent to computing a discrete Fourier transform.
Importantly, DMD with centering can always be used to compute eigenvalue spectra of the dynamics.
However, in many cases DMD without centering cannot model the corresponding dynamics, most notably if the dynamics have full effective rank.
Additionally, we generalize the notion of centering to extracting arbitrary, but known, fixed frequencies from the data. 
We corroborate these theoretical results numerically on three nonlinear examples: 
the Lorenz system, a surveillance video, and brain recordings. 
Since centering the data is simple and computationally efficient, we recommend it as a preprocessing step before DMD; furthermore, we suggest that it can be readily used in conjunction with many other popular implementations of the DMD algorithm.

\end{abstract}

\begin{keywords}Dynamic mode decomposition, 
spatiotemporal decomposition,
centering, 
equilibrium
\end{keywords}



\section{Introduction}

Recent advances in sensing, data storage, and computing technologies have resulted in an unprecedented increase in the availability of large-scale measurements. 
Many measurements come from high-dimensional, complex systems in which the governing equations are poorly understood or entirely unknown, which has motivated the development of data-driven techniques for characterizing and modeling  spatiotemporal dynamics.
Importantly, these techniques must be computationally efficient and interpretable, providing insights into the underlying physics and potentially enabling predictions for rapid manipulation and control.

One popular method for modeling such systems is the {\em dynamic mode decomposition} (DMD) \cite{schmid2008sixty,schmid2010dynamic,rowley2009spectral,mezic2013analysis,tu2013dynamic,kutz2016dynamic}. 
Like principal component analysis (PCA)~\cite{jolliffe2011principal,wold1987principal} and independent component analysis (ICA)~\cite{hyvarinen1999fast}, DMD is a dimensionality reduction technique that decomposes data into a set of spatial and temporal modes. Unlike PCA and ICA, DMD makes the additional assumption that the data are observations from an underlying dynamical system.
In particular, the dynamics are assumed to be approximately linear, and the data are decomposed into pairs of interpretable spatial and temporal modes. 
DMD has been successfully applied in a wide variety of disciplines, including fluid dynamics~\cite{schmid2011applications}, neuroscience~\cite{brunton2016extracting}, 
disease modeling~\cite{proctor2015discovering}, finance~\cite{mann2016dynamic}, and computer vision~\cite{grosek2014dynamic}. 
In addition, several extensions and variations to the DMD algorithm have been developed (see \cite{jovanovic2012low,williams2015data,kutz2016multiresolution,zhang2017online,proctor2016dynamic,jovanovic2014sparsity,askham2018variable}, among many others).

For many systems of interest, the dynamics we want to model are perturbations about equilibria.
To name a few specific examples, in hydrodynamics we may model motion of a fluid about a base flow~\cite{noack2003hierarchy,tadmor2010mean}; in video processing we may extract the foreground from a static background~\cite{sobral2014comprehensive}; and in climate science we may analyze anomalies that depart from long-term averages~\cite{hart2001using,fritts1971multivariate}. 
Further, linearizing about equilibria provides key information on the stability of the system about these fixed points.
In general, the mean of the measurement data is a natural estimate of an unknown equilibrium point;
therefore, it is natural to apply DMD on mean-subtracted data.


\begin{table}[]
\centering
\begin{tabular}{llcc}
 &  & \multicolumn{2}{c}{Data Generation} \\
 &  & \thead{Linear System \\ $\bm{x}_{j+1} = \bm{A} \bm{x}_j$} & \thead{Affine System \\ $\bm{x}_{j+1} = \bm{A} \bm{x}_j + \bm{b}$} \\ \cline{3-4} 
\multicolumn{1}{c}{\multirow{2}{*}{\rotatebox{90}{Method}}} & \multicolumn{1}{l|}{\rotatebox{90}{\thead{DMD w/o \\ centering}}} & \multicolumn{1}{c|}{\begin{tabular}[c]{@{}c@{}}\cmark \\ (Theorem \ref{theorem:uniqueness}) \vspace{0.5cm} \end{tabular}} & \multicolumn{1}{c|}{\begin{tabular}[c]{@{}c@{}}Sometimes \\ (Theorem \ref{theorem:dmd_conditions_affine}) \vspace{0.5cm} \end{tabular}} \\ \cline{3-4} 
\multicolumn{1}{c}{} & \multicolumn{1}{l|}{\rotatebox{90}{\thead{DMD w/ \\ centering}}} & \multicolumn{1}{c|}{\begin{tabular}[c]{@{}c@{}}\cmark \\ (Theorem \ref{theorem:one_rank_update_1}) \vspace{0.5cm} \end{tabular}} & \multicolumn{1}{c|}{\begin{tabular}[c]{@{}c@{}}\cmark \\ (Theorem \ref{theorem:rank_one_update}) \vspace{0.5cm} \end{tabular}} \\ \cline{3-4} 
\end{tabular}
\caption{Comparison of performance of DMD with and without centering. 
A \cmark ~indicates that the method does correctly extract the spectrum and modes of the system in each column. 
}
\label{table:comparing_dmd_with_and_without_centering}
\end{table}

In a complementary perspective, we may think of DMD computed over a short time window as a multivariate Taylor expansion of the dynamics.
It follows that the model should include an affine, or bias, term (Fig.~\ref{fig:motivation}), which is usually not a part of the DMD model; if DMD is computed on centered data, then this affine term is expected to be small
(in fact, one of our results is that it will be zero).

In this work, we show that centering data improves the performance of DMD. 
Previous work has suggested that computing the DMD of centered data may be restrictive and have undesirable consequences~\cite{chen2012variants}.
In particular, Chen \emph{et al.}~\cite{chen2012variants} show that DMD on mean-subtracted data is equivalent to a temporal discrete Fourier transform (DFT) in time, restricting the frequencies extracted to be independent of the dataset. 
This argument hinged on the mean-subtracted data being full rank; however, here we show that, in linear systems that contain a fixed point, mean-subtracted data will always have linearly dependent columns.
Therefore, DMD on centered data does not converge to the DFT.
Furthermore, 
our proposed method of centering the data successfully extracts the equilibrium and dynamics about this equilibrium.


In Section~\ref{sec:background} we review the DMD algorithm, focusing on comparing the SVD-based approach to the companion matrix approach. 
We propose centering the data in Section~\ref{sec:centering_data},
showing that it is equivalent to incorporating an affine term in the DMD model.
Section~\ref{sec:Uniqueness} concerns the uniqueness of the DMD modes and whether the DMD problem is well-posed, 
generalizing previous results to the case where data may be low rank.
Section~\ref{sec:comparing_without_centering} compares DMD with and without centering, including theory and numerical examples. 
We find that, in the case of linear dynamics about an equilibrium point, DMD with centering can always extract the correct dynamics; however, DMD without centering sometimes produces an inaccurate model. These results are summarized in Table \ref{table:comparing_dmd_with_and_without_centering}.
The work by Chen \emph{et al.}~\cite{chen2012variants} is discussed in detail in Section~\ref{sec:DFT}, where we argue that DMD with centering is not equivalent to a DFT.{}
This notion of data centering is generalized in Section~\ref{sec:arbitraryfrequencies} to extract dynamics 
while subtracting any known fixed frequencies.
Finally, Section~\ref{sec:examples} demonstrates DMD with centering and fixed frequency subtraction on three nonlinear examples, the Lorenz system, background-foreground separation of a video, and brain recordings. As a practical recommendation, we suggest centering data as a preprocessing step in DMD.
All the code used to reproduce results in the figures is openly available at \url{https://github.com/sethhirsh/DMD_with_Centering}.


\begin{figure}[bt]
\centering
\vspace{5pt}
\begin{overpic}[scale=0.6,unit=1bp]{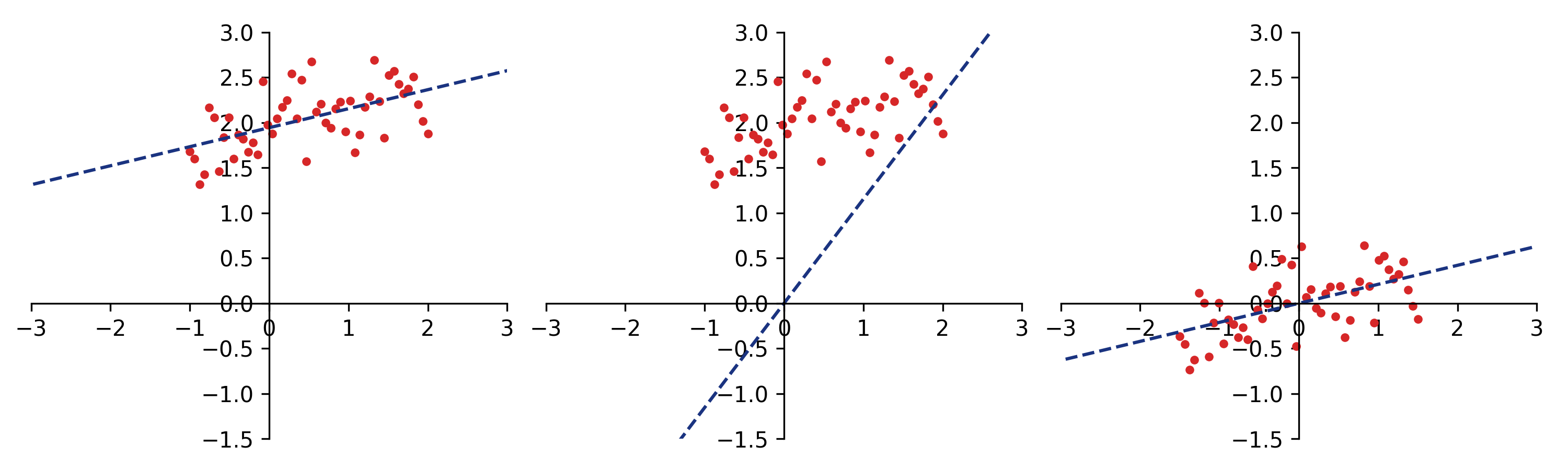}
\put(32,7){$x$}
\put(18,27.5){$y$}
\put(65,7){$x$}
\put(51,27.5){$y$}
\put(98,7){$\bar{x}$}
\put(84,27.5){$\bar{y}$}
\put(18,17){\textcolor[RGB]{31, 119, 180}{$y=ax+b$}}
\put(1,30){a)}
\put(33,30){b)}
\put(67,30){c)}
\put(54,12){\textcolor[RGB]{0, 107, 164}{$y=ax$}}
\put(86,17){\textcolor[RGB]{0, 107, 164}{$\bar{y}=a \bar{x}$}}
\end{overpic}
\caption{An illustration of the benefit of centering for one-dimensional regression, where the data $(x_j, y_j)$ is generated by an affine model with noise. a) Data fit to affine model $y = a x + b$ yields a good fit. b) Data fit to linear model $y = a x$ yields a poor fit. c) Centered data $(\bar{x}_j \bar{y}_j)$ fit to linear model $\bar{y} = a \bar{x}$ yields a good fit.}
\label{fig:motivation}
\end{figure}

\nomenclature{Fig.}{Figure}
\nomenclature{$A_i$}{Area of the $i^{th}$ component}\textbf{}

\section{Background}
\label{sec:background}

Initially developed in the fluid dynamics community, dynamic mode decomposition (DMD) has become a popular tool for analyzing large-scale dynamical systems in many different application domains~\cite{kutz2016dynamic,schmid2011applications}. 
In this section we briefly review two formulations of this problem.

Consider a set of $T+1$ measurement snapshots $\bm{x}_j \in \R^n$ for $j = 1, \ldots, T+1$,
which are generated by linear dynamics,
\begin{equation}
  \bm{x}_{j + 1} = \bm{A} \bm{x}_{j}.
  \label{eq:approx}
\end{equation}
The goal of DMD is to characterize the dynamics of the system by the eigendecomposition of the linear operator $\bm{A} \in \R^{n \times n}$: 
\begin{equation}
  \bm{A} \bm{v}_i = \lambda_i \bm{v}_i \text{ for } i = 1, \ldots,n.
  \label{eq:DMD_modes}
\end{equation}
The eigenvectors $\bm{v}_i$ are typically refered to as the DMD modes. 
For our theoretical results, we typically assume that 
the eigenvalues $\lambda_i \neq 0$ are distinct.
For many systems of interest, the true dynamics may be nonlinear and/or stochastic.
In addition, observations may contain measurement noise.
Where the measurements deviate from true linear dynamics, the goal of DMD is to find the best linear approximation.


\begin{table}[t]
\captionsetup{labelformat=empty} 
\begin{framed}
\centering
{\HLtext \textcolor{header1}{\normalsize{Notation}}} \\
\vspace{2 pt}
\begin{tabular}{cp{0.6\textwidth}}  
$T+1$ & number of time samples \\
$n$ & number of features \\
$\bm{A}$ & $\mathbb{R}^{n \times n}$ matrix that generates a dynamical system \\
$r$ & rank of $\bm{A}$ \\
$\bm{X}$ & $\R^{n \times T+1}$ set of measurement snapshots ranging in time from $\bm{x}_1$ through $\bm{x}_{T+1}$ \\
$\bm{X}_1$ & $\R^{n \times T}$ matrix containing $\bm{x}_1$ through $\bm{x}_T$ \\
$\bm{X}_2$ & $\R^{n \times T}$ matrix containing $\bm{x}_2$ through $\bm{x}_{T+1}$ \\
$\bm{\mu}$ & $\R^n$ mean of $\bm{X}$ \\
$\bm{\mu}_1$ & $\R^n$ mean of $\bm{X}_1$ \\
$\bm{\mu}_2$ & $\R^n$ mean of $\bm{X}_2$ \\
$\hat{\bm{A}}$ & $\R^{n \times n}$ matrix computed using SVD-based DMD without centering \\
$\bar{\bm{A}}$ & $\R^{n \times m}$ matrix computed using SVD-based DMD on centered matrices $\bm{X}_1 - \bm{\mu}_1 \bm{1}^{\intercal}$ and $\bm{X}_2 - \bm{\mu}_2 \bm{1}^{\intercal}$ \\
$\bm{C}$ & $\R^{T \times T}$ companion matrix \\
$\bm{b}$ & $\R^n$ affine or bias term in dyanmics \\
$\bm{1}$ & $\R^T$ vector of ones \\
$\bm{I}$ & identity matrix
\end{tabular}
\caption{}
\vspace{-10mm}
\end{framed}
\end{table}

\subsection{SVD-based DMD}
We first summarize the most commonly used formulation of DMD, the SVD-based approached also known as exact DMD~\cite{tu2013dynamic}.
First, let us define the pair of 
snapshot matrices containing the measurement vectors
\begin{equation}
\bm{X}_1 = \begin{bmatrix} | & | &  & | \\
          \bm{x}_1 & \bm{x}_2 & \cdots & \bm{x}_{T} \\
          | & | & & |
\end{bmatrix}
\quad
\text{and}
\quad
\bm{X}_2 = \begin{bmatrix} | & | &  & | \\
          \bm{x}_2 & \bm{x}_3 & \cdots & \bm{x}_{T + 1} \\
          | & | &  & |
\end{bmatrix}.
\label{eq:x1andx2}
\end{equation}
If the snapshots satisfy \eqref{eq:approx},
then we have that
\begin{equation}
  \bm{X}_2 = \bm{A} \bm{X}_1 .
  \label{eq:A_approx}
\end{equation}
Otherwise, we hope to discover the ``best''
$\bm{A}$ which approximately satisfies this equation.

One solution to \eqref{eq:A_approx} is obtained by regression with least squares minimization; we define this solution to be $\hat{\bm{A}}$. 
In general, \eqref{eq:A_approx} may be consistent (having at least one solution), or inconsistent (having no solution). 
With these two cases, the corresponding minimization problem takes the form,
\begin{equation}  \bm{\hat{A}} = \begin{cases} 
      \min_{\bm{A}} \norm{\bm{A}}_F \text{ such that } \bm{X}_2 = \bm{A} \bm{X}_1 & \text{if \eqref{eq:A_approx} is consistent} \\
       \min_{\bm{A}} \norm{\bm{X}_2 - \bm{A} \bm{X}_1}_F & \text{if \eqref{eq:A_approx} is inconsistent}
   \end{cases}
   \label{eq:cases}.
\end{equation}
The solution in either case is given by the least squares fit, 
\begin{equation}
  \bm{\hat{A}} \coloneqq \bm{X}_2 \bm{X}_1^{\dagger},
  \label{eq:DMD_pseudoinverse}
\end{equation}
where $\bm{X}_1^{\dagger}$ denotes the Moore-Penrose pseudo-inverse of $\bm{X}_1$ \cite{penrose1955generalized}.
The DMD modes and eigenvalues in the SVD approach are the eigenvectors and eigenvalues of $\bm{\hat{A}}$, respectively. 

When $n$ is large, it may not be practical to compute $\hat{\bm{A}} \in \R^{n \times n}$ and its eigendecomposition directly. 
If $\bm{X}_1$ is low rank or approximately low rank, we may project the dynamics to a lower dimensional basis.  
In particular, if $\bm{X}_1$ has rank $r$, we may compute the reduced SVD,
\begin{equation}
  \bm{X}_1^r \coloneqq \bm{U}_r \bm{\Sigma}_r \bm{V}_r^{\intercal},
  \label{eq:x1svd}
\end{equation}
where the left singular vectors 
$\bm{U}_r \in \R^{n \times r}$ 
and right singular vectors 
$\bm{V}_r \in \R^{r \times T}$ 
are orthogonal matrices and 
$\bm{\Sigma}_r$ is a real positive diagonal matrix \cite{golub1971singular}. 
When measurement noise is present, 
we define $r$ to be the effective rank of the system
(discussed in detail in Section~\ref{subsec:effective_rank}).

We can then define the matrix,
\begin{equation*}
  \tilde{\bm{A}} \coloneqq \bm{U}_r^{\intercal} \bm{X}_2 \bm{V}_r \bm{\Sigma}_r^{-1},
\end{equation*}
where $\tilde{\bm{A}} \in \R^{r \times r}$ is much smaller in size than $\bm{A}$. 
Importantly, Tu \emph{et al.} showed that the 
eigenvalues of $\tilde{\bm{A}}$ 
are precisely the nonzero eigenvalues of $\bm{A}$~\cite{tu2013dynamic}.
The corresponding eigenvectors $\bm{\phi}_i$ of $\bm{A}$ can be found by first computing the eigenvectors $\bm{w}_i$ of $\tilde{\bm{A}}$,
\begin{equation*}
  \tilde{\bm{A}} \bm{w}_i = \lambda_i \bm{w}_i,
\end{equation*}
and then projecting into the original measurement space,
\begin{equation}
  \bm{\phi}_i = \frac{1}{\lambda_i} \bm{X}_2 \bm{V}_r \bm{\Sigma}_r^{-1} \bm{w}_i.
  \label{eq:exact_modes}
\end{equation}
In the case where the ranges of $\bm{X}_1$ and $\bm{X}_2$ are equal, \eqref{eq:exact_modes} reduces to $\bm{\phi}_i = \bm{U}_r \bm{w}_i$.

\subsection{Companion Matrix Approach} \label{subsec:companionmatrix}
An alternative formulation of DMD focuses on the computation of a
so-called companion matrix.
Although it is less commonly used in practice, this original formulation by Schmid~\cite{schmid2010dynamic} is analytically simpler and has been used in some key theoretical work~\cite{chen2012variants,arbabi2017ergodic}.

We again consider $T+1$ snapshots $\bm{x}_1, \ldots, \bm{x}_{T + 1} \in \R^n$ which satisfy \eqref{eq:approx}. 
We may express the last snapshot $T+1$ as a linear combination of the first $T$ states and a residual $\bm{r} \in \R^n$ which is orthogonal to these $T$ states,
\begin{equation*}
  \bm{x}_{T+1} = \sum_{j = 1}^{T} c_j \bm{x}_j + \bm{r} \text{ such that } \bm{r} \perp \text{span}\set{{\bm{x}_1, \ldots \bm{x}_{T}}},
\end{equation*}
where $c_j \in \R$.
Equivalently, we may write in matrix form,
\begin{equation}
  \bm{X}_2 = \bm{X}_1 \bm{C} + \bm{r} \bm{e}_{T}^{\intercal}
  \label{eq:companion_solution}
\end{equation}
where $\bm{e}_T = [0, \ldots, 0 , 1]^{\intercal}$, and
\begin{equation}
  \bm{C} = \begin{bmatrix} 
    0 & 0 &\cdots & 0 & c_1 \\
    1 & 0 & \cdots & 0 & c_2 \\
    0 & 1 & & 0 & c_3 \\
    \vdots & & \ddots & & \vdots \\
    0 & 0 & & 1 & c_T
   \end{bmatrix}
   \label{eq:companion_matrix}
\end{equation}
is called the companion matrix.
The least squares solution for 
$\bm{c} = [c_1, \ldots, c_T ]$ 
is then given by $\bm{c} =  \bm{X}_1^{\dagger} \bm{x}_{T+1}$. 
Note that all of the residual error in the model is placed on the last time snapshot.
The least squares solution $\bm{C}$ to \eqref{eq:companion_solution} is unique if and only if $\bm{x}_1, \ldots, \bm{x}_T$ are linearly independent \cite{chen2012variants}. 
If $\bm{x}_1, \ldots, \bm{x}_T$ are linearly independent, then $\bm{C}$ must also equal the least squares solution, 
\begin{equation*}
  \bm{C} = \bm{X}_1^{\dagger} \bm{X}_2.
\end{equation*} 
In some cases, 
the DMD modes (eigenvalues and eigenvectors of $\bm{A}$,
assuming \eqref{eq:approx})
are related to the eigenvalues and eigenvectors
of the companion matrix $\bm{C}$ \cite{schmid2010dynamic},
but these eigenvalues are, in general, not equal. 
In particular, the eigenvalues are only guaranteed to be equal if the columns of $\bm{X}_1$ are linearly independent \cite{chen2012variants,schmid2010dynamic}.

\subsection{Rank vs.\ Effective Rank}
\label{subsec:effective_rank}
If $\bm{X}_1$ has full column rank, then the companion matrix approach described in Section~\ref{subsec:companionmatrix} is equivalent to computing the DMD modes as in \eqref{eq:DMD_modes}. 
In the presence of measurement noise, $\bm{X}_1$ will almost surely have full column rank
even in the case where $\bm{A}$ is low-rank ($r < T < n$).
In that case, even though the companion matrix approach \eqref{eq:companion_matrix} has a well-posed solution, it yields the wrong number of eigenvalues.
Specifically, the companion matrix approach yields $T$ modes while there are only $r$ signal modes masked by noise. 
On the other hand, the SVD-based approach \eqref{eq:x1svd}
can filter out these noise modes
with a good estimate of $\mathrm{rank}(\bm{A})$.
Formally, we define the effective rank as follows:
\begin{definition}
Given a set of noisy measurements
$
  \bm{Y} = \bm{X} + \eta\bm{Z},
$
where $\bm{X}$ is low rank and  elements of $\bm{Z}$ are drawn independently from a random distribution with zero mean
and finite variance, we define the \textbf{effective rank} of $\bm{Y}$ to be the rank of $\bm{X}$. 
\end{definition}

In other words, the effective rank of $\bm{Y}$ is the rank of the data with no measurement noise ($\eta = 0$).
In general, the effective rank of the data is unknown. 
However, it may be estimated from the SVD spectrum~\cite{gavish2014optimal,wall2003singular}.
We now claim (and later show, in Section~\ref{sec:DFT}) that the companion matrix approach yields the DMD modes if and only if $\bm{X}_1$ not only has full column rank but also 
\textit{full effective column rank}. 
Although subtle, this distinction will play an important role in Section~\ref{sec:DFT}.

\section{Centering Data}
\label{sec:centering_data}


DMD as defined in \eqref{eq:x1andx2} and \eqref{eq:A_approx} can be thought of as a multivariate regression of the dynamics.
If the mean of $\bm{X}$ is not zero, as would occur with data measured about a non-zero equilibrium or data acquired over a short time interval, then the DMD model would be improved with an additional affine term:

\begin{equation}
	\bm{X}_2 = \bm{A} \bm{X}_1 + \bm{b} \bm{1}^{\intercal},
	\label{eq:affine}
\end{equation}
where $\bm{b} \in \R^n$ and $\bm{1}$ is a vector of length $T$ whose elements are all one. The corresponding minimization problem to find $\bm{A}$ and $\bm{b}$ is given by
\begin{equation}
  \tilde{\bm{A}}, \tilde{\bm{b}} = \begin{cases} \argmin_{\bm{A},\bm{b}} \norm{\bm{A}}_F \text{ s.t. } \bm{A} \bm{X}_1 + \bm{b} \bm{1}^{\intercal} = \bm{X}_2 & \text{if  \eqref{eq:affine} is consistent}\\
  \argmin_{\bm{A},\bm{b}} \norm{\bm{A} \bm{X}_1 + \bm{b} \bm{1}^{\intercal} - \bm{X}_2}_F^2 & \text{if \eqref{eq:affine} is inconsistent}.
  \end{cases}
  \label{eq:min_affine}
\end{equation}


As illustrated in Fig.~\ref{fig:motivation}, the incorporation of an affine term in the one-dimensional regression model is equivalent to centering $x_j$ and $y_j$ in the data.
For high-dimensional data, we compute the means of $\bm{X}_1$ and $\bm{X}_2$ as
\begin{equation*}
  \bm{\mu}_1 = \frac{\bm{X}_1 \bm{1}}{\bm{1}^{\intercal} \bm{1}}
  \quad \text{and} \quad
  \bm{\mu}_2 = \frac{\bm{X}_2\bm{1}}{\bm{1}^{\intercal} \bm{1}}.
\end{equation*}
The corresponding mean-subtracted or {\it centered}
data matrices are
\begin{equation*}
  \bar{\bm{X}}_1 = \bm{X}_1 - \bm{\mu}_1 \bm{1}^{\intercal}
  \quad \text{and} \quad
  \bar{\bm{X}}_2 = \bm{X}_2 - \bm{\mu}_2 \bm{1}^{\intercal}, 
\end{equation*} and we now solve the unbiased regression problem
\begin{equation}
  \bar{\bm{X}}_2 = \bar{\bm{A}} \bar{\bm{X}}_1.
  \label{eq:center}
\end{equation}
The least squares solution to \eqref{eq:center} is given by
\begin{equation}
  \bar{\bm{A}} = \begin{cases} \argmin_{\bm{A}} \norm{\bm{A}}_F \text{ s.t. } \bm{A}  \bar{\bm{X}}_1  =\bar{\bm{X}}_2 & 
  \text{if \eqref{eq:center} is consistent} \\
  \argmin_{\bm{A}} \norm{\bm{A} \bar{\bm{X}}_1  - \bar{\bm{X}}_2}_F^2 & 
  \text{if \eqref{eq:center} is inconsistent}
  \end{cases}
  \label{eq:min_center}.
\end{equation}

Importantly, the minimization problem \eqref{eq:min_center} is simpler to solve than the one in \eqref{eq:min_affine}.
We show in Proposition~\ref{prop:affine_center} that they are equivalent, yielding $\tilde{\bm{A}} = \bar{\bm{A}}$. 
The following Proposition, which we include for completeness,
is well-known among statisticians
in the setting of multivariate regression:



\begin{proposition}
Let
$\bm{X}_1$ and $\bm{X}_2 \in \R^{n \times T}$ 
be arbitrary matrices. 
The minimization problems \eqref{eq:min_affine} and \eqref{eq:min_center} are equivalent, 
with solutions $\tilde{\bm{A}} = \bar{\bm{A}}$ and $\tilde{\bm{b}} = \bm{\mu}_2 -  \bar{\bm{A}} \bm{\mu}_1$.
\label{prop:affine_center}
\end{proposition}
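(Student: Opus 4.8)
The plan is to minimize the affine objective \eqref{eq:min_affine} by \emph{partial minimization}: for each fixed $\bm{A}$, I would first solve the inner problem over the bias term $\bm{b}$ in closed form, and then show that substituting the optimal $\bm{b}$ back into the objective reduces \eqref{eq:min_affine} exactly to the centered problem \eqref{eq:min_center}. The single computation driving everything is the algebraic identity $\bm{A}\bm{X}_1 + (\bm{\mu}_2 - \bm{A}\bm{\mu}_1)\bm{1}^{\intercal} - \bm{X}_2 = \bm{A}\bar{\bm{X}}_1 - \bar{\bm{X}}_2$, which follows immediately from $\bar{\bm{X}}_i = \bm{X}_i - \bm{\mu}_i\bm{1}^{\intercal}$. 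This says that once $\bm{b}$ is chosen to be $\bm{\mu}_2 - \bm{A}\bm{\mu}_1$, the affine residual coincides with the centered residual, for every $\bm{A}$.

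For the inconsistent case I would fix $\bm{A}$, write the objective as a function of $\bm{b}$ only, and minimize. Setting the gradient with respect to $\bm{b}$ to zero gives $\bm{b}(\bm{A}) = (\bm{X}_2 - \bm{A}\bm{X}_1)\bm{1}/(\bm{1}^{\intercal}\bm{1}) = \bm{\mu}_2 - \bm{A}\bm{\mu}_1$, and since the inner problem is strictly convex in $\bm{b}$ (the quadratic term is $T\norm{\bm{b}}^2$ with $T = \bm{1}^{\intercal}\bm{1} > 0$) this minimizer is unique. Plugging $\bm{b}(\bm{A})$ back in and applying the identity above, the profiled objective becomes exactly $\norm{\bm{A}\bar{\bm{X}}_1 - \bar{\bm{X}}_2}_F^2$. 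Hence minimizing \eqref{eq:min_affine} over $(\bm{A},\bm{b})$ is the same as minimizing \eqref{eq:min_center} over $\bm{A}$, so the optimal $\bm{A}$'s agree, giving $\tilde{\bm{A}} = \bar{\bm{A}}$, and the profiling formula then yields $\tilde{\bm{b}} = \bm{\mu}_2 - \bar{\bm{A}}\bm{\mu}_1$.

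For the consistent case, I would instead argue that the two \emph{feasible sets} of $\bm{A}$ coincide, so that minimizing $\norm{\bm{A}}_F$ over them gives the same answer. If $(\bm{A},\bm{b})$ satisfies $\bm{A}\bm{X}_1 + \bm{b}\bm{1}^{\intercal} = \bm{X}_2$, then right-multiplying by $\bm{1}$ forces $\bm{b} = \bm{\mu}_2 - \bm{A}\bm{\mu}_1$, and substituting this via the identity shows the constraint is equivalent to $\bm{A}\bar{\bm{X}}_1 = \bar{\bm{X}}_2$; conversely, any $\bm{A}$ feasible for the centered constraint extends to a feasible affine pair by the same choice of $\bm{b}$. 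Thus the affine and centered constraints cut out the same set of admissible $\bm{A}$ (and consistency of one is equivalent to consistency of the other), so the minimum-Frobenius-norm selections coincide and again $\tilde{\bm{A}} = \bar{\bm{A}}$, $\tilde{\bm{b}} = \bm{\mu}_2 - \bar{\bm{A}}\bm{\mu}_1$.

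The proof is essentially routine, so rather than a genuine obstacle the only point requiring care is bookkeeping across the two cases of \eqref{eq:min_affine} and \eqref{eq:min_center}: one must verify that consistency is preserved under the $\bm{b}$-elimination and that the minimum-norm tie-breaking in the consistent case is unaffected, since it is carried out over an identical feasible set. Everything else reduces to the one displayed identity and a one-line convex minimization over $\bm{b}$.
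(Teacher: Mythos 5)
Your proposal is correct and follows essentially the same route as the paper's proof: eliminating $\bm{b}$ via the stationarity condition (equivalently, right-multiplying the constraint by $\bm{1}/(\bm{1}^{\intercal}\bm{1})$) to obtain $\bm{b} = \bm{\mu}_2 - \bm{A}\bm{\mu}_1$, and then observing that the residual collapses to the centered one. Your explicit remarks on strict convexity of the inner problem in $\bm{b}$ and on the two feasible sets coinciding (including the converse direction) are slightly more careful than the paper's write-up, but the argument is the same.
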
 
\begin{proof}
We have two cases to consider, depending on whether the affine system of equations \eqref{eq:affine} is linearly consistent (has at least one solution) or inconsistent (has no solution). 
We will show that system \eqref{eq:affine} is consistent if and only if \eqref{eq:center} is consistent as well.

\begin{caseof}
\case{Consistent}
When \eqref{eq:affine} is consistent, 
the affine problem \eqref{eq:min_affine} is in the constrained 
(at least one solution for $\bm{A}$) case.
Note that we do not minimize over the norm of $\bm{b}$. 
Multiplying the constraint by $\frac{\bm{1}}{\bm{1}^{\intercal} \bm{1}}$ yields,
\begin{equation*}
  \bm{A} \frac{\bm{X}_1 \bm{1}}{\bm{1}^{\intercal} \bm{1}} + \bm{b} \frac{\bm{1}^{\intercal} \bm{1}}{\bm{1}^{\intercal} \bm{1}} = \frac{\bm{X}_2 \bm{1}}{\bm{1}^{\intercal} \bm{1}},
  \label{eq:affine_underdetermined}
\end{equation*}
which can be rearranged to find
$
  \tilde{\bm{b}} = \bm{\mu}_2 - \bm{A} \bm{\mu}_1.
$
Thus we can write \eqref{eq:min_affine} as
\begin{equation*}
  \min_{\bm{A}} \norm{\bm{A}}_F \text{ such that } \bar{\bm{X}}_2 = \bm{A} \bar{\bm{X}_1},
\end{equation*}
which is precisely \eqref{eq:min_center}.
Note that, since we assumed the constraint is satisfiable,
this implies that the centered system of equations \eqref{eq:center}
is consistent.

\case{Inconsistent}
If no solution to \eqref{eq:affine} exists, 
then we minimize the residual error without constraints.
Taking the gradient with respect to $\bm{b}$ and setting it equal to $0$ yields
\begin{equation*}
\bm{1}^{\intercal} \bm{X}_1^{T} \bm{A}^{\intercal} + \bm{b}^{\intercal} \bm{1}^{\intercal} \bm{1} = \bm{1}^{\intercal} \bm{X}_2
\end{equation*}
and rearranging,
we again find that
$
	\tilde{\bm{b}} = \bm{\mu}_2 - \bm{A} \bm{\mu}_1.
$
Plugging this into \eqref{eq:min_affine}, yields the minimization problem
\begin{equation*}
\begin{split}
  \min_{\bm{A}} \norm{\bm{A} \bm{X}_1 + \left( \bm{\mu}_2 - \bm{A} \bm{\mu}_1 \right) \bm{1}^{\intercal} - \bm{X}_2}_F^2  &= \min_{\bm{A}} \norm{\bm{A} \left( \bm{X}_1 - \bm{\mu}_1  \bm{1}^{\intercal} \right) - \left( \bm{X}_2 - \bm{\mu}_2 \bm{1}^{\intercal} \right) }_F^2 \\
  &= \min_{\bm{A}} \norm{\bm{A} \bar{\bm{X}}_1 - \bar{\bm{X}}_2  }_F^2,
\end{split}
\end{equation*}
which is precisely \eqref{eq:min_center}.
Note that this also must be inconsistent, otherwise the affine problem
would be consistent, and we would obtain a contradiction.
\end{caseof}
\end{proof}

\begin{remark} We make no assumptions about the matrices 
$\bm{X}_1$ and $\bm{X}_2$ in Proposition~\ref{prop:affine_center}. 
Therefore, this result does not depend on the system being linear or being generated by a dynamical system, and thus it is applicable in all regression settings.
\end{remark}

Instead of centering $\bm{X}_1$ and $\bm{X}_2$ individually, we may also choose to subtract the overall mean  $\bm{\mu} = \frac{1}{T + 1} \sum_{j = 1}^{T+1} \bm{x}_j$ from the data. Mean-subtraction of data and normalization of variance is standard in matrix factorization algorithms such as PCA~\cite{wold1987principal} and ICA~\cite{hyvarinen1999fast}. 
In many cases, $\bm{\mu}$ is very similar to $\bm{\mu}_1$ and $\bm{\mu}_2$. 
In particular, $\bm{\mu}_1, \bm{\mu}_2$, and $\bm{\mu}$ are all approximately equal in the case of neutral dynamics (all of the DMD eigenvalues lie near the unit circle). 
However, in the presence of transients or unstable behavior, these three values may be very different.

\section{Uniqueness of Modes}
\label{sec:Uniqueness}
The remainder of this paper compares DMD modes and eigenvalues computed with and without centering. To perform such a comparison, it is necessary that we first establish uniqueness of the DMD modes and corresponding eigenvalues for a linear system (Section \ref{subsec:uniqueness_dmd}). We then follow with a similar proof for the uniqueness of modes data generated by an affine linear system (Section \ref{subsec:uniqueness_affine}). This is key for showing that the modes from DMD with centering are well-defined. 

\subsection{Uniqueness of Dynamic Mode Decomposition}
\label{subsec:uniqueness_dmd}
Following \eqref{eq:approx}, \eqref{eq:x1andx2}, and \eqref{eq:A_approx}, assume we have sequential snapshots of data $\bm{x}_1, \ldots, \bm{x}_{T+1} \in \R^n$ that are generated by linear dynamics \eqref{eq:approx}.
In general, there may be infinitely many matrices $\bm{A}'$ that satisfy
\begin{equation*}
  \bm{x}_{j + 1} = \bm{A}' \bm{x}_j.
\end{equation*}
Chen \textit{et al.} show, using the companion matrix approach,
that although $\bm{A}'$ is not unique, 
the corresponding eigenvectors and eigenvalues of $\bm{A}'$ are:
\begin{theorem}[Chen et al.\ \cite{chen2012variants}, Theorem 1 (rephrased)]
The choice of eigenvalues $\lambda_1, \ldots, \lambda_n$ and corresponding eigenvectors $\bm{v}_1, \ldots, \bm{v}_n$ are unique up to a reordering in $j$, if and only if $\bm{x}_1, \ldots, \bm{x}_T$ are linearly independent and $\lambda_1, \ldots, \lambda_n$ are distinct.
\end{theorem}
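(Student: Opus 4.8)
The plan is to work entirely inside the companion matrix formulation of Section~\ref{subsec:companionmatrix}, because that representation exposes the eigenstructure directly and has already been shown there to be unique exactly when $\bm{x}_1, \ldots, \bm{x}_T$ are linearly independent. The first step I would carry out is to establish a correspondence between the eigenpairs of \emph{any} admissible $\bm{A}'$ and those of the companion matrix $\bm{C}$. Every $\bm{A}'$ obeying $\bm{x}_{j+1} = \bm{A}'\bm{x}_j$ satisfies $\bm{A}'\bm{X}_1 = \bm{X}_2$, and in the setting of exact linear dynamics the companion relation \eqref{eq:companion_solution} reduces to $\bm{X}_2 = \bm{X}_1\bm{C}$, so $\bm{A}'\bm{X}_1 = \bm{X}_1\bm{C}$. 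Consequently, if $(\lambda, \bm{w})$ is an eigenpair of $\bm{C}$ then $\bm{v} = \bm{X}_1\bm{w}$ gives $\bm{A}'\bm{v} = \bm{X}_1\bm{C}\bm{w} = \lambda\bm{X}_1\bm{w} = \lambda\bm{v}$, an eigenpair of $\bm{A}'$ that is independent of which admissible $\bm{A}'$ was chosen. Thus the DMD eigenvalues are the eigenvalues of $\bm{C}$, and the modes are the images under $\bm{X}_1$ of the eigenvectors of $\bm{C}$.

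For the ``if'' direction I would then argue as follows. Linear independence of $\bm{x}_1, \ldots, \bm{x}_T$ yields, by the result cited in Section~\ref{subsec:companionmatrix}, a unique companion matrix $\bm{C} = \bm{X}_1^{\dagger}\bm{X}_2$, hence a unique characteristic polynomial and a unique multiset of eigenvalues $\lambda_1, \ldots, \lambda_n$; this settles uniqueness of the spectrum. Distinctness of the $\lambda_i$ forces every eigenspace to be one-dimensional, so each $\bm{w}_i$ is determined up to scaling, and since linear independence makes $\bm{X}_1$ injective, each mode $\bm{v}_i = \bm{X}_1\bm{w}_i$ is likewise determined up to scaling. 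Up to the unavoidable reordering of the index and rescaling of each vector, the eigenpairs are therefore unique.

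For the ``only if'' direction I would prove the contrapositive in two parts. If the columns of $\bm{X}_1$ are linearly \emph{dependent}, then $\bm{X}_1$ has a nontrivial null space, so the constraint $\bm{X}_1\bm{c} = \bm{x}_{T+1}$ admits a whole affine family of solutions $\bm{c}$; distinct choices of $\bm{c}$ produce distinct characteristic polynomials $\lambda^T - \sum_{j} c_j \lambda^{j-1}$, and since a monic polynomial is determined by its root multiset, the eigenvalues genuinely change, so the spectrum is not unique. If instead the eigenvalues are not distinct, I would locate the degeneracy at the level of eigenvectors: a repeated eigenvalue either enlarges the associated eigenspace or breaks diagonalizability, and in either case there is no canonical choice of the corresponding modes.

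I expect the hard part to be making the ``only if'' direction for repeated eigenvalues fully rigorous. A companion matrix is non-derogatory—each eigenvalue has geometric multiplicity one—so the mere presence of a repeated eigenvalue of $\bm{C}$ does not by itself enlarge an eigenspace, and the naive argument stalls. The genuine failure of uniqueness must instead be traced either to the breakdown of the Vandermonde eigenvector construction, which requires the $\lambda_i$ to be distinct in order to furnish a complete independent set of modes, or to the extra freedom in the unobserved part of $\bm{A}'$ when a repeated eigenvalue also sits off the data subspace. Pinning down exactly which eigenvectors are being claimed unique, namely those lying in the column space of $\bm{X}_1$, and showing the decomposition truly fails to be well defined when eigenvalues collide, is the delicate step; by contrast, the linear-independence half reduces cleanly to the already-established uniqueness of $\bm{C}$ together with the polynomial–root bijection.
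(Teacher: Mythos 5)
First, a framing point: the paper does not actually prove this statement --- it is imported (rephrased) from Chen et al.\ \cite{chen2012variants} and used as a citation, and the uniqueness machinery the paper does develop (for the generalization in Theorem~\ref{theorem:uniqueness}) is the factorization $\bm{X}=\bm{V}\bm{\Lambda}^{\intercal}$ with $\bm{\Lambda}$ a rectangular Vandermonde matrix, combined with Lemma~\ref{lemma:vandermonde}; it never routes the argument through the companion matrix. So your approach is genuinely different from the surrounding text, but it has gaps that the Vandermonde route is specifically designed to avoid.

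The concrete problems are these. (1) Your opening step asserts that exact linear dynamics makes the residual in \eqref{eq:companion_solution} vanish, so that $\bm{A}'\bm{X}_1=\bm{X}_1\bm{C}$. That is not a consequence of $\bm{x}_{j+1}=\bm{A}'\bm{x}_j$ alone: it requires $\bm{x}_{T+1}\in\mathrm{span}\{\bm{x}_1,\ldots,\bm{x}_T\}$, which fails whenever the Krylov sequence has not closed by step $T$ (e.g.\ $T<n$ with generic $\bm{A}$). When $\bm{r}\neq\bm{0}$, the eigenvalues of $\bm{C}$ are not eigenvalues of $\bm{A}'$ --- exactly the caveat raised in Sections~\ref{subsec:companionmatrix}--\ref{subsec:effective_rank} --- and your eigenpair correspondence, on which both directions of the proof rest, collapses. (2) In the ``only if'' direction for dependent columns, you show that different solutions $\bm{c}$ of $\bm{X}_1\bm{c}=\bm{x}_{T+1}$ yield different characteristic polynomials of $\bm{C}$. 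But in precisely that regime $\sigma(\bm{C})$ and the DMD spectrum decouple, so non-uniqueness of $\bm{C}$ does not by itself give non-uniqueness of the eigenpairs of $\bm{A}'$; indeed the whole point of the paper's Theorem~\ref{theorem:uniqueness} is that the \emph{nonzero} eigenpairs remain unique for low-rank data, so the failure you must exhibit lives in the zero/off-data part of the spectrum and has to be argued directly. (3) You correctly identify that the repeated-eigenvalue half stalls; the clean repair for both (2) and (3) is the paper's own device: write $\bm{X}=\bm{V}\bm{\Lambda}^{\intercal}$, observe that a competing eigenvalue adjoined to $\bm{\Lambda}$ would force a rank-deficient rectangular Vandermonde matrix in contradiction with Lemma~\ref{lemma:vandermonde}, and note that when eigenvalues coincide $\bm{\Lambda}$ loses column rank so $\bm{V}=\bm{X}\bm{\Lambda}^{\intercal^{\dagger}}$ is no longer determined --- which is the precise sense in which uniqueness fails.
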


In other words, even though $\bm{A}'$ is not unique, 
all $n$ eigenvalues and eigenvectors of $\bm{A}'$ 
are unique if and only if $\bm{X}$ has full column rank and the eigenvalues are distinct. In the case of low-rank data, $\bm{X}$ will not have full column rank and the eigenvalues of $\bm{A}'$ will not be distinct, since $\bm{A}'$ may have a zero eigenvalue with multiplicity greater than $1$. 
Consequently, this Theorem does not provide much relevant information about uniqueness in the case of low-rank dynamics. 
To remedy this, we generalize this result to the case of low-rank data and prove that the nonzero eigenvalues and corresponding eigenvectors are unique. 
We first establish two useful lemmas:
\begin{lemma}
Consider the $(p+1) \times q$ {\em rectangular} Vandermonde matrix
\begin{equation*}
  \bm{\Lambda} = \begin{bmatrix} 1 & 1 & \cdots & 1 \\
                                \lambda_1 & \lambda_2 & \cdots & \lambda_q \\
                                \vdots & \vdots & \vdots & \vdots \\
                                \lambda_1^p & \lambda_2^p & \cdots & \lambda_q^p
   \end{bmatrix} .
\end{equation*}
Then the $q$ columns of $\bm{\Lambda}$ are linearly independent 
($\bm{\Lambda}$ has full column rank) if and only if 
$q \leq p+1$ and $\lambda_1, \lambda_2, \ldots, \lambda_q$ are distinct.
\label{lemma:vandermonde}
\end{lemma}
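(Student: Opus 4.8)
The plan is to prove the two implications separately, with essentially all of the work concentrated in the sufficiency direction, where the classical square Vandermonde determinant does the heavy lifting. Throughout, the $q$ columns of $\bm{\Lambda}$ are vectors in the $(p+1)$-dimensional space $\C^{p+1}$ (over $\R$ if all $\lambda_i$ are real), and "full column rank" means these $q$ vectors are linearly independent.

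First I would dispatch necessity. If the columns of $\bm{\Lambda}$ are linearly independent, then since they lie in $\C^{p+1}$ we immediately get $q \leq p+1$. For distinctness, note that if $\lambda_i = \lambda_j$ for some $i \neq j$, then the $i$th and $j$th columns coincide entrywise (each equals $[1, \lambda_i, \ldots, \lambda_i^p]^{\intercal}$), so the columns are linearly dependent, contradicting full column rank. Hence the $\lambda_i$ must be distinct.

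For sufficiency, assume $q \leq p+1$ and that $\lambda_1, \ldots, \lambda_q$ are distinct. The key reduction is that it suffices to exhibit $q$ rows of $\bm{\Lambda}$ whose corresponding $q \times q$ submatrix is invertible: any dependence $\bm{\Lambda}\bm{c} = \bm{0}$ among the columns restricts to a dependence among the columns of that submatrix, which then forces $\bm{c} = \bm{0}$. I would take the top $q$ rows, giving the square Vandermonde matrix $\bm{W}$ with entries $W_{k+1,i} = \lambda_i^k$ for $k = 0, \ldots, q-1$; this is well-defined precisely because $q \leq p+1$. It then remains to show $\det \bm{W} \neq 0$, and this is the only real content. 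I would establish the standard evaluation $\det \bm{W} = \prod_{1 \leq i < j \leq q} (\lambda_j - \lambda_i)$ by induction on $q$: subtracting $\lambda_1$ times each row from the one below it (a determinant-preserving operation) clears the first column below its top entry, after which a factor $(\lambda_i - \lambda_1)$ can be pulled from the $i$th column for each $i \geq 2$, reducing the problem to a smaller Vandermonde determinant in $\lambda_2, \ldots, \lambda_q$ that the inductive hypothesis handles. Since distinctness makes every factor $\lambda_j - \lambda_i$ nonzero, $\det \bm{W} \neq 0$, and hence $\bm{\Lambda}$ has full column rank.

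The main obstacle here is purely bookkeeping: arranging the row and column operations in the induction so that they yield exactly the claimed product $\prod_{i<j}(\lambda_j - \lambda_i)$ without sign or indexing slips; the conceptual steps are all immediate. If one prefers to avoid the explicit determinant formula, the same fact can be obtained by a polynomial-interpolation argument, reading $\bm{\Lambda}\bm{c} = \bm{0}$ as the statement that the power sums $\sum_i c_i \lambda_i^k$ vanish for $k = 0, \ldots, q-1$ at the $q$ distinct nodes $\lambda_i$, which again forces $\bm{c} = \bm{0}$; either route closes the argument.
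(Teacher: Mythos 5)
Your proposal is correct and follows essentially the same route as the paper: reduce to the $q \times q$ submatrix formed by the top $q$ rows (valid since $q \leq p+1$) and invoke the nonvanishing of the square Vandermonde determinant for distinct nodes, which the paper cites from the literature while you prove it by the standard induction. The extra detail you supply for the necessity direction and the determinant evaluation is fine but not a departure in method.
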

\begin{proof}
Assume $q \leq p + 1$ (if not, $\mathrm{rank}(\bm{\Lambda}) \leq p + 1 < q$).
We form the $q \times q$ submatrix 
\begin{equation*}
\begin{bmatrix} 1 & 1 & \cdots & 1 \\
                                \lambda_1 & \lambda_2 & \cdots & \lambda_r \\
                                \vdots & \vdots & \vdots & \vdots \\
                                \lambda_1^{q-1} & \lambda_2^{q-1} & \cdots & \lambda_q^{q-1}
   \end{bmatrix},
\end{equation*}
which has nonzero determinant \cite{turner1966inverse} if and only if the eigenvalues are distinct.
\end{proof}
\begin{lemma} Suppose we have sequential time series snapshots $\bm{x}_1, \ldots, \bm{x}_{T+1}$  such that $\bm{x}_{j+1} = \bm{A} \bm{x}_j$ for $j = 1, \ldots, T$, where $\bm{A}$ is diagonalizable.
\begin{itemize}
  \item If $\mathrm{range}(\bm{X}_1) = \mathrm{range}(\bm{X}_2)$, then $\bm{X}$ may be expressed as
  \begin{equation}
    \bm{X} = \underbrace{\begin{bmatrix} 
  | & | &  & | \\
  \bm{v}_1 & \bm{v}_2 & \cdots & \bm{v}_r \\
  | & | & & |
  \end{bmatrix}}_{\bm{V}} \underbrace{\begin{bmatrix}
  1 & \lambda_1 & \lambda_1^2 & \cdots & \lambda_1^{T} \\
  1 & \lambda_2 & \lambda_2^2 & \cdots & \lambda_2^{T} \\
  \vdots & \vdots & \vdots & \cdots & \vdots \\
  1 & \lambda_r & \lambda_r^2 & \cdots & \lambda_r^{T}
  \end{bmatrix}}_{\bm{\Lambda}^{\intercal}},
  \label{eq:decomp1}
  \end{equation}
  where $\lambda_1, \ldots, \lambda_r$ and  $\bm{v}_1, \ldots \bm{v}_r$ are distinct nonzero eigenvalues and eigenvectors of $\bm{A}$, respectively.

  \item If $\mathrm{range}(\bm{X}_1) \neq \mathrm{range}(\bm{X}_2)$, then $\bm{X}$ may be expressed as 
  \begin{equation}
      \bm{X} = \underbrace{\begin{bmatrix} 
  | & | & | &  & | \\
  \bm{v}_0 & \bm{v}_1 & \bm{v}_2 & \cdots & \bm{v}_r \\
  |& | & | & & |
  \end{bmatrix}}_{\bm{V}} \underbrace{\begin{bmatrix}
  1 & 0 & 0 & \cdots & 0 \\
  1 & \lambda_1 & \lambda_1^2 & \cdots & \lambda_1^{T} \\
  1 & \lambda_2 & \lambda_2^2 & \cdots & \lambda_2^{T} \\
  \vdots & \vdots & \vdots & \cdots & \vdots \\
  1 & \lambda_r & \lambda_r^2 & \cdots & \lambda_r^{T}
  \end{bmatrix}}_{\bm{\Lambda}^{\intercal}},
  \label{eq:decomp2}
  \end{equation}
  where $\bm{v}_0 \in \mathrm{Null}(\bm{A})$.
\end{itemize}

\label{lemma:factor}
\end{lemma}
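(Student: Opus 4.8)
The plan is to use the diagonalizability of $\bm{A}$ to expand every snapshot in one fixed eigenbasis, which produces the Vandermonde structure directly, and then to read off the two cases according to whether the initial condition $\bm{x}_1$ has a component in $\mathrm{Null}(\bm{A})$. First I would expand the initial snapshot: since $\bm{A}$ is diagonalizable its eigenvectors span $\R^n$, so I can write $\bm{x}_1 = \sum_i c_i \bm{u}_i$ with $\bm{A}\bm{u}_i = \mu_i \bm{u}_i$. The dynamics give $\bm{x}_j = \bm{A}^{j-1}\bm{x}_1 = \sum_i c_i \mu_i^{j-1}\bm{u}_i$ for $j = 1, \ldots, T+1$. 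Absorbing each $c_i$ into its eigenvector, dropping the terms with $c_i = 0$, and collapsing repeated eigenvalues (each distinct eigenvalue $\mu$ contributes the single direction $\sum_{\mu_i = \mu} c_i \bm{u}_i$), I am left with a sum over the distinct eigenvalues whose eigenspace meets $\bm{x}_1$ nontrivially. Reading off the coefficient of the $j$-th column, $\bm{x}_j = \sum_i \bm{v}_i \lambda_i^{j-1}$, is precisely the statement $\bm{X} = \bm{V}\bm{\Lambda}^{\intercal}$, where the $i$-th column of $\bm{V}$ is the rescaled eigenvector $\bm{v}_i$ and the $i$-th row of $\bm{\Lambda}^{\intercal}$ is $(1, \lambda_i, \ldots, \lambda_i^T)$; by construction the $\lambda_i$ are distinct.

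Next I would split off the zero eigenvalue. If $\bm{x}_1$ has a nonzero projection $\bm{v}_0$ onto $\mathrm{Null}(\bm{A})$, then the associated eigenvalue is $0$ and, using $0^0 = 1$, its row of $\bm{\Lambda}^{\intercal}$ is $(1, 0, \ldots, 0)$; hence $\bm{v}_0$ contributes only to the first column $\bm{x}_1$ and appears exactly as the extra row and column in \eqref{eq:decomp2}. The remaining nonzero eigenvalues $\lambda_1, \ldots, \lambda_r$ are distinct, and the corresponding block of $\bm{\Lambda}^{\intercal}$ is the transpose of the rectangular Vandermonde matrix of Lemma \ref{lemma:vandermonde}.

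Finally I would connect the null-space component to the range condition, which is the crux of the argument. Every column $\bm{x}_2, \ldots, \bm{x}_{T+1}$ of $\bm{X}_2$ is a combination of only $\bm{v}_1, \ldots, \bm{v}_r$ (the $\bm{v}_0$ term vanishes for $j \geq 2$), whereas $\bm{x}_1$ additionally carries the $\bm{v}_0$ direction. Invoking Lemma \ref{lemma:vandermonde} to certify that the relevant Vandermonde blocks attain full column rank, I obtain $\mathrm{range}(\bm{X}_2) = \mathrm{span}\{\bm{v}_1, \ldots, \bm{v}_r\}$ and $\mathrm{range}(\bm{X}_1) = \mathrm{span}\{\bm{v}_0, \bm{v}_1, \ldots, \bm{v}_r\}$, so the two ranges coincide exactly when $\bm{v}_0 = \bm{0}$. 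This yields the dichotomy: $\mathrm{range}(\bm{X}_1) = \mathrm{range}(\bm{X}_2)$ gives the form \eqref{eq:decomp1}, while $\mathrm{range}(\bm{X}_1) \neq \mathrm{range}(\bm{X}_2)$ forces a nonzero $\bm{v}_0 \in \mathrm{Null}(\bm{A})$ and the form \eqref{eq:decomp2}.

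The step I expect to be the main obstacle is this last range identification. It requires enough snapshots — essentially $T \geq r$ — so that the Vandermonde blocks supplied by Lemma \ref{lemma:vandermonde} have full column rank and the column spans of $\bm{X}_1$ and $\bm{X}_2$ genuinely equal the claimed eigenvector spans; it also requires verifying that $\bm{v}_0$, when present, truly lies outside $\mathrm{span}\{\bm{v}_1, \ldots, \bm{v}_r\}$, so that the inequality of ranges is strict. Once these span identifications are pinned down, the eigenbasis expansion and the Vandermonde bookkeeping are routine.
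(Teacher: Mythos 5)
Your proposal is correct and follows essentially the same route as the paper's proof: expand $\bm{x}_1$ in the eigenbasis of the diagonalizable $\bm{A}$, absorb coefficients and collapse repeated eigenvalues, iterate the dynamics to produce the Vandermonde factor, and split off the component of $\bm{x}_1$ lying in $\mathrm{Null}(\bm{A})$. The one place you diverge is the step you flag as the main obstacle---identifying the ranges with eigenvector spans via Lemma~\ref{lemma:vandermonde}, which needs $T \geq r$---whereas the paper sidesteps this entirely by noting that $\bm{X}_2 = \bm{A}\bm{X}_1$ gives $\mathrm{range}(\bm{X}_2) \subseteq \mathrm{range}(\bm{A})$ unconditionally, so $\mathrm{range}(\bm{X}_1) = \mathrm{range}(\bm{X}_2)$ already places $\bm{x}_1$ in $\mathrm{range}(\bm{A})$, which for diagonalizable $\bm{A}$ is exactly the span of the eigenvectors with nonzero eigenvalues (and in the unequal-range case one may simply take $\bm{v}_0$ to be the, possibly zero, null-space component of $\bm{x}_1$).
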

\begin{proof}
Assume $\text{range}(\bm{X}_1) = \text{range}(\bm{X}_2)$. Since $\bm{X}_2 = \bm{A} \bm{X}_1$, and $\text{range}(\bm{X}_1) = \text{range}(\bm{X}_2)$, then $\text{range}(\bm{X}_2) \subseteq \text{range}(\bm{A})$. Since $\bm{A}$ is diagonalizable, then we express $\bm{x}_1$ as a linear combination of the nonzero eigenvectors of $\bm{A}$, scaled appropriately, so that
\begin{equation*}
  \bm{x}_1 = \sum_{i = 1}^r \bm{v}_i.
\end{equation*}
We note that eigenvalues corresponding to the $\bm{v}_i$'s are distinct. Otherwise, they can be summed together in the initial condition $\bm{x}_1$.
Recursively applying $\bm{A}$ to $\bm{x}_1$,
\begin{gather*}
  \bm{x}_2 = \bm{A} \bm{x}_1 = \bm{A} \sum_{i = 1}^r \bm{v}_i = \sum_{i = 1}^r \lambda_i \bm{v}_r, \\
  \bm{x}_3 = \bm{A}^2 \bm{x}_1 = \sum_{i = 1}^r \lambda_i^2 \bm{v}_i, 
\end{gather*}
and in general,
\begin{equation*}
  \bm{x}_k = \bm{A}^{k - 1} \bm{x}_1 = \sum_{i = 1}^r \lambda_i^{k - 1} \bm{v}_i.
\end{equation*}
Putting this in matrix form yields \eqref{eq:decomp1}.

If $\text{range}(\bm{X}_1) \neq \text{range}(\bm{X}_2)$, then $\bm{x}_1$ has a component $\bm{v}_0$ which lies in $\text{Null}(\bm{A})$. We scale $\bm{v}_0$ so that $\bm{x}_1 = \sum_{i = 0}^r \bm{v}_i$.
Since $\bm{v}_0$ has an associated eigenvalue $\lambda_0 = 0$, for $k \geq 1$,
\begin{equation*}
  \bm{x}_k = \bm{A}^{k - 1} \bm{x}_1 = \sum_{i = 0}^r \lambda_i^{k - 1} \bm{v}_i = \sum_{i = 1}^r \lambda_i^{k - 1} \bm{v}_i, 
\end{equation*}
which yields the decomposition in \eqref{eq:decomp2}.
\end{proof}
Note that since the columns of $\bm{V}$ correspond to eigenvectors of distinct eigenvalues of $\bm{A}$, $\bm{V}$ has full column rank.
We now introduce a definition of well-posedness for the 
DMD problem.

\begin{definition} 
Suppose we have sequential time series snapshots $\bm{x}_1, \ldots, \bm{x}_{T+1}$ such that $\bm{x}_{j+1} = \bm{A} \bm{x}_j$. Let $\bm{A}$ have $r$ nonzero and distinct eigenvalues
$\lambda_1, \ldots, \lambda_r$
and corresponding eigenvectors 
$\bm{v}_1, \ldots, \bm{v}_r$.
We say that the {\bf DMD problem is well-posed}
if the conditions
\begin{enumerate}
 \item $\bm{x}_1$ is not orthogonal to any $\bm{v}_1, \ldots, \bm{v}_r$, and either
  \item $T \geq r$ and $\bm{X}_1$ and $\bm{X}_2$ share the same range, or
  \item $T \geq r+1$,
\end{enumerate}
are satisfied. 
\end{definition}

Now we prove our main uniqueness theorem.
\begin{theorem}[Uniqueness of Dynamic Mode Decomposition] Suppose we have sequential time series snapshots $\bm{x}_1, \ldots, \bm{x}_{T+1}$ 
such that $\bm{x}_{j+1} = \bm{A} \bm{x}_j$ for $j = 1,\ldots,T$, 
where $\bm{A}$ has $r$ nonzero and distinct eigenvalues $\lambda_1, \ldots, \lambda_r$ and corresponding eigenvectors, $\bm{v}_1, \ldots, \bm{v}_r$. 
Let $\bm{A}'$ be any other rank $r$ matrix which satisfies
$\bm{x}_{j+1} = \bm{A}' \bm{x}_j$. 
If the DMD problem is well-posed,
then $\bm{A}'$ has the same $r$ nonzero eigenvalues $\lambda_1, \ldots, \lambda_r$ and corresponding eigenvectors $\bm{v}_1, \ldots, \bm{v}_r$ as $\bm{A}$, and these are unique up to scaling.
\label{theorem:uniqueness}
\end{theorem}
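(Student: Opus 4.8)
The plan is to leverage the factorization of $\bm{X}$ from Lemma~\ref{lemma:factor}, which expresses every snapshot in terms of the genuine eigenpairs of $\bm{A}$, and then to show that any competing rank-$r$ matrix $\bm{A}'$ generating the same sequence is forced to act on each $\bm{v}_i$ exactly as multiplication by $\lambda_i$. Condition~1 of well-posedness guarantees that $\bm{x}_1$ has a nonzero component along each eigendirection, so that all $r$ eigenvectors actually appear in the data and Lemma~\ref{lemma:factor} applies with the $\bm{v}_i$ scaled to unit coefficients; that is, $\bm{x}_k = \sum_{i=1}^r \lambda_i^{k-1}\bm{v}_i$ for the relevant indices $k$.

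The central step is to define the defect vectors $\bm{w}_i \coloneqq \bm{A}'\bm{v}_i - \lambda_i \bm{v}_i$ and extract them from the recursion. Substituting the eigen-expansion of $\bm{x}_k$ into $\bm{x}_{k+1} = \bm{A}'\bm{x}_k$ and comparing with the expansion of $\bm{x}_{k+1}$ gives $\sum_{i=1}^r \lambda_i^{k-1}\bm{w}_i = \bm{0}$ for a range of $k$. Collecting these into $\bm{W}\bm{M} = \bm{0}$, where $\bm{W} = [\bm{w}_1,\ldots,\bm{w}_r]$ and $\bm{M}$ is the matrix of powers $\lambda_i^{k-1}$, I would then apply Lemma~\ref{lemma:vandermonde}: since the $\lambda_i$ are distinct, $\bm{M}$ has full row rank $r$ as soon as enough indices $k$ are available, forcing $\bm{W} = \bm{0}$ and hence $\bm{A}'\bm{v}_i = \lambda_i\bm{v}_i$ for every $i$. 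The bookkeeping of ``enough indices'' is exactly where the two well-posedness alternatives enter: under condition~2 the ranges agree, $\bm{v}_0$ is absent, and all $k = 1,\ldots,T$ are usable, so $T \geq r$ suffices; under condition~3 the index $k=1$ must be discarded because $\bm{x}_1$ carries the spurious null-space part $\bm{v}_0$, leaving $k = 2,\ldots,T$, and after factoring the nonzero $\lambda_i$ out of $\bm{M}$ one needs $T - 1 \geq r$, i.e.\ $T \geq r+1$.

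Having shown $\bm{A}'\bm{v}_i = \lambda_i\bm{v}_i$, I would finish with a rank count. The $\bm{v}_i$ are linearly independent (distinct eigenvalues), so $\bm{A}'$ has at least the $r$ distinct nonzero eigenvalues $\lambda_1,\ldots,\lambda_r$; because $\mathrm{rank}(\bm{A}') = r$ forces the algebraic multiplicity of the zero eigenvalue to be at least $n-r$, there can be at most $r$ nonzero eigenvalues counted with multiplicity. Hence $\lambda_1,\ldots,\lambda_r$ are precisely the nonzero eigenvalues of $\bm{A}'$, each simple, and the corresponding eigenspaces are one-dimensional, which yields uniqueness up to scaling.

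I expect the main obstacle to be the careful alignment of indices in the Vandermonde argument---making sure the right set of equations $\sum_{i} \lambda_i^{k-1}\bm{w}_i = \bm{0}$ is available in each case and that the surviving power matrix genuinely has full row rank under the stated hypothesis---rather than the final rank count, which is routine. A secondary subtlety is justifying that discarding $k=1$ under condition~3 is both necessary (because of the contribution $\bm{v}_0 \in \mathrm{Null}(\bm{A})$) and harmless (because $T \geq r+1$ restores the lost equation).
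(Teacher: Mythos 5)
Your proposal is correct, and it reaches the conclusion by a genuinely different route than the paper, even though both rest on the same two ingredients (Lemma~\ref{lemma:factor} and the full-rank property of rectangular Vandermonde matrices from Lemma~\ref{lemma:vandermonde}). The paper argues by contradiction on factorizations of the data matrix: it posits a second decomposition $\bm{X} = \bm{V}'\bm{\Lambda}'^{\intercal}$ attached to $\bm{A}'$, shows that a discrepant eigenvalue $\lambda_i'$ would force the augmented $(r+1)\times(T+1)$ Vandermonde matrix in \eqref{eq:vandermonde} to be row-rank deficient (impossible by Lemma~\ref{lemma:vandermonde}), and then recovers the eigenvectors uniquely as $\bm{V} = \bm{X}\bm{\Lambda}^{\intercal^{\dagger}}$. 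You instead work directly with the operator: the defect vectors $\bm{w}_i = \bm{A}'\bm{v}_i - \lambda_i\bm{v}_i$ satisfy $\bm{W}\bm{M} = \bm{0}$ with $\bm{M}$ of full row rank, so $\bm{A}'\bm{v}_i = \lambda_i\bm{v}_i$ outright, and a rank count on $\bm{A}'$ shows these $r$ simple eigenvalues exhaust its nonzero spectrum. Your version buys something real: the paper's construction of the second factorization tacitly assumes $\bm{A}'$ is diagonalizable and that the data excites exactly $r$ of its eigenmodes, whereas your argument needs only that $\bm{A}'$ is a rank-$r$ matrix reproducing the recursion, which matches the theorem's actual hypothesis. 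The paper's version, in exchange, isolates the Vandermonde row-space argument in a form that is reused almost verbatim in the affine uniqueness proof (Theorem~\ref{theorem:rank_one_update}). Your index bookkeeping for the two well-posedness alternatives ($k=1,\ldots,T$ giving $T\geq r$ when the ranges agree; discarding $k=1$ and factoring out the nonzero $\lambda_i$ giving $T\geq r+1$ otherwise) is exactly right and mirrors where the paper appends the row $[1\ 0\ \cdots\ 0]$ to $\bm{\Lambda}$.
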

\begin{proof} First, suppose $\text{range}(\bm{X}_1) = \text{range}(\bm{X}_2)$. Since $\bm{x}_{j+1} = \bm{A} \bm{x}_j$, if we define $\bm{X}$ to be the matrix containing all $T+1$ snapshots, then by \ref{lemma:factor} we can factor $\bm{X}$ as follows,
\begin{equation}
\bm{X} = \begin{bmatrix} 
| & | & & | \\
\bm{x}_1 & \bm{x}_2 & \cdots & \bm{x}_{T+1} \\
| & | & & |
\end{bmatrix} = 
\underbrace{\begin{bmatrix} 
  | & | &  & | \\
  \bm{v}_1 & \bm{v}_2 & \cdots & \bm{v}_r \\
  | & | & & |
  \end{bmatrix}}_{\bm{V}} \underbrace{\begin{bmatrix}
  1 & \lambda_1 & \lambda_1^2 & \cdots & \lambda_1^{T} \\
  1 & \lambda_2 & \lambda_2^2 & \cdots & \lambda_2^{T} \\
  \vdots & \vdots & \vdots & \cdots & \vdots \\
  1 & \lambda_r & \lambda_r^2 & \cdots & \lambda_r^{T}
  \end{bmatrix}}_{\bm{\Lambda}^{T}},
\label{eq:Xfactorization}
\end{equation}
where $\bm{v}_j$ are the eigenvectors of $\bm{A}$ scaled appropriately so that $\bm{x}_1 = \sum_{i = 1}^r \bm{v}_j$. We denote these matrices $\bm{V}$ and $\bm{\Lambda}$ and note that $\bm{V}$ and $\bm{\Lambda}$ have full column rank
(Lemma~\ref{lemma:vandermonde}).

Suppose there exists another solution with corresponding eigenvalues $\lambda_1', \ldots \lambda'_r$ and eigenvectors $\bm{v}'_1, \ldots, \bm{v}'_r$. We construct another factorization,
\begin{equation*}
\bm{X} = \begin{bmatrix} 
  | & | &  & | \\
  \bm{v}'_1 & \bm{v}'_2 & \cdots & \bm{v}'_r \\
  | & | & & |
  \end{bmatrix} \begin{bmatrix}
  1 & \lambda_1' & \lambda_1^{'2} & \cdots & \lambda_1^{'T} \\
  1 & \lambda_2' & \lambda_2^{'2} & \cdots & \lambda_2^{'T} \\
  \vdots & \vdots & \vdots & \cdots & \vdots \\
  1 & \lambda_r' & \lambda_r^{'2} & \cdots & \lambda_r^{'T}
  \end{bmatrix}.
\end{equation*}
For the eigenvalues of $\bm{A}$ and $\bm{A}'$ to be different, there must exist some $\lambda_i'$ which does not equal $\lambda_1, \ldots, \lambda_r$. Since $\left\{[1, \ \lambda_i, \ \lambda_i^2, \cdots, \ \lambda_i^T] \text{ for } i = 1, \ldots, r \right\}$ spans the row space of $\bm{X}_2$, then $[1 \text{ } \lambda_i' \text{ } \lambda_i^{'2} \cdots \lambda_i^{'T}]$ must lie in this row space. Hence the $(r + 1) \times (T + 1)$ matrix,
\begin{equation}
\begin{bmatrix}
  1 & \lambda_1 & \lambda_1^2 & \cdots & \lambda_1^T \\
  1 & \lambda_2 & \lambda_2^2 & \cdots & \lambda_2^{T} \\
  \vdots & \vdots & \vdots & \cdots & \vdots \\
  1 & \lambda_r & \lambda_r^2 & \cdots & \lambda_r^{T} \\
  1 & \lambda_i' & \lambda_i^{'2} & \cdots & \lambda_i^{'T}
\end{bmatrix}
\label{eq:vandermonde}
\end{equation}
must have low row rank. However,since $r + 1 \leq T+1$ by assumption, and all the $\lambda_j$'s and $\lambda_i'$ are all distinct, by Lemma \ref{lemma:vandermonde} \eqref{eq:vandermonde} must have full row rank. With this contradiction we conclude that the nonzero eigenvalues of $\bm{A}$ are unique. 

For the eigenvectors of $\bm{A}$, note that $\bm{X} = \bm{V \Lambda }^{\intercal}$. Since $\bm{\Lambda}$ has full column-rank there is a unique solution for $\bm{V} = \bm{X} \bm{\Lambda}^{\intercal^{\dagger}}$. Thus, the eigenvectors of $\bm{A}$ must be unique up to a scaling. Note that since $\bm{X}$ and $\bm{\Lambda}$ have rank $r$, $\bm{V}$ must also have rank $r$ and thus have full column rank.

Note that we assumed that 
$\bm{X}_1$ and $\bm{X}_2$ share the same range. 
If they do not, since $\bm{X}_2 = \bm{A} \bm{X}_1$, there must be a component of $\bm{x}_1$ which is in the nullspace of $\bm{A}$. This results in appending an extra column $\bm{v_0}$, which is in the nullspace of $\bm{A}$, to $\bm{V}$ and an extra row $[1 \text{ } 0 \cdots 0]$ to $\bm{\Lambda}$ as in \eqref{eq:decomp2}. Following the same method, we find that $T \geq r$ must be replaced with $T \geq r+1$. 
\end{proof}

\subsection{Uniqueness of Affine Linear Model}
\label{subsec:uniqueness_affine}
As we have seen in Section \ref{sec:centering_data}, DMD with centering is equivalent to an additional affine term. Thus, in addition to proving that the DMD modes are unique it also important to show uniqueness of the modes for an affine dynamical system of the form $\bm{x}_{j+1} = \bm{A} \bm{x}_j + \bm{b}$. In the following Theorem, we assume that the matrix $\bm{A}$ does not contain an eigenvalue equal to $1$. If $1$ is an eigenvalue of $\bm{A}$, then there is an inherent ambiguity in whether this mode is an eigenvector of $\bm{A}$ or incorporated into $\bm{b}$.
First, we again define some conditions for the problem to
be well-posed:
\begin{definition} 
Suppose we have sequential time series snapshots $\bm{x}_1, \ldots, \bm{x}_{T+1}$ such that $\bm{x}_{j+1} = \bm{A} \bm{x}_j + \bm{b}$. Let $\bm{A}$ have $r$ nonzero and distinct eigenvalues, $\lambda_1, \ldots, \lambda_r$ and corresponding eigenvectors 
$\bm{v}_1, \ldots, \bm{v}_r$.
We say that the {\bf affine DMD problem is well-posed}
if the conditions
\begin{enumerate}
  \item $\bm{A}$ does not have an eigenvalue equal to $1$,
\item $\bm{x}_1 - \bm{c}$ is not orthogonal to $\bm{v}_1, \ldots, \bm{v}_r$, and either
  \item $T \geq r+1$ and $\bm{X}_1 - \bm{c} \bm{1}^{\intercal}$ and $\bm{X}_2 - \bm{c} \bm{1}^{\intercal}$ share the same range, or
  \item $T \geq r+2$,
\end{enumerate}
are satisfied, where $\bm{c} = \left( \bm{I} - \bm{A} \right)^{-1} \bm{b}$. 
\end{definition}

\begin{theorem}[Uniqueness of Affine DMD]
\label{theorem:rank_one_update}
Suppose we have sequential time series snapshots $\bm{x}_1, \ldots, \bm{x}_{T+1}$ such that $\bm{x}_{j+1} = \bm{A} \bm{x}_j + \bm{b}$ for $j = 1, \ldots, T$, where $\bm{A}$ has $r$ nonzero and distinct eigenvalues, $\lambda_1, \ldots, \lambda_r$ and corresponding eigenvectors $\bm{v}_1, \ldots, \bm{v}_r$. 
Let $\bm{A}'$ and $\bm{b}'$ be any other rank $r$ matrix and vector which satisfy $\bm{x}_{j+1} = \bm{A}' \bm{x}_j + \bm{b}'$. If the affine DMD problem is well-posed,
then $\bm{b}' = \bm{b}$ and $\bm{A}'$ has the same $r$ nonzero eigenvalues $\lambda_1, \ldots, \lambda_r$ and corresponding eigenvectors $\bm{v}_1, \ldots, \bm{v}_r$ as $\bm{A}$,
and these are unique up to scaling.
\end{theorem}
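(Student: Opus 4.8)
The plan is to reduce the affine recursion to the homogeneous linear case already settled in Theorem~\ref{theorem:uniqueness}. The device is the equilibrium $\bm{c} = (\bm{I}-\bm{A})^{-1}\bm{b}$, which is well defined precisely because condition~1 forbids the eigenvalue $1$, and which satisfies $\bm{A}\bm{c}+\bm{b}=\bm{c}$. Setting $\bm{y}_j := \bm{x}_j - \bm{c}$ gives $\bm{y}_{j+1} = \bm{A}\bm{x}_j + \bm{b} - \bm{c} = \bm{A}(\bm{x}_j-\bm{c}) = \bm{A}\bm{y}_j$, so the shifted snapshots obey linear dynamics. Equivalently, and more convenient for invoking the earlier theorem verbatim, I would augment the state to $\tilde{\bm{x}}_j = [\bm{x}_j^\intercal,\, 1]^\intercal$, for which $\tilde{\bm{x}}_{j+1} = \tilde{\bm{A}}\tilde{\bm{x}}_j$ with $\tilde{\bm{A}} = \left[\begin{smallmatrix}\bm{A} & \bm{b}\\ \bm{0}^\intercal & 1\end{smallmatrix}\right]$. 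This matrix has rank $r+1$; its nonzero eigenpairs are $(\lambda_i,\,[\bm{v}_i^\intercal,0]^\intercal)$ together with the new pair $(1,\,[\bm{c}^\intercal,1]^\intercal)$, and the extra eigenvalue $1$ is distinct from all the $\lambda_i$ again by condition~1.

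With this reduction I would mirror Lemma~\ref{lemma:factor}: expanding $\bm{x}_1 = \bm{c} + \sum_{i=1}^r \bm{v}_i$ along the eigenbasis and iterating yields $\bm{x}_k = \bm{c} + \sum_{i=1}^r \lambda_i^{k-1}\bm{v}_i$, so the full snapshot matrix factors as $\bm{X} = \bm{V}_{\mathrm{aug}}\,\bm{\Lambda}_{\mathrm{aug}}^\intercal$, where $\bm{V}_{\mathrm{aug}} = [\bm{c},\bm{v}_1,\ldots,\bm{v}_r]$ and $\bm{\Lambda}_{\mathrm{aug}}$ is the rectangular Vandermonde built from the distinct nodes $1,\lambda_1,\ldots,\lambda_r$ (an extra zero node and a null eigenvector $\bm{v}_0$ are appended when $\mathrm{range}(\bm{X}_1-\bm{c}\bm{1}^\intercal)\neq \mathrm{range}(\bm{X}_2-\bm{c}\bm{1}^\intercal)$, exactly as in \eqref{eq:decomp2}). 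The all-ones row of $\bm{\Lambda}_{\mathrm{aug}}$ is the equilibrium mode $\lambda_0=1$, and its coefficient in every column is fixed at $1$; this is why condition~2 needs only to keep the coefficients of $\bm{v}_1,\ldots,\bm{v}_r$ from vanishing. The shift of the sample thresholds to $T\ge r+1$ (same range) and $T\ge r+2$ (different range) is exactly what guarantees that, after appending one candidate eigenvalue, the enlarged Vandermonde still has more columns than rows, so that Lemma~\ref{lemma:vandermonde} applies.

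I would then run the contradiction argument of Theorem~\ref{theorem:uniqueness} on this augmented factorization. A competing solution $(\bm{A}',\bm{b}')$ produces a second factorization of the same $\bm{X}$ with node set $\{1,\lambda_1',\ldots\}$; any node not already among $1,\lambda_1,\ldots,\lambda_r$ would force the $(r+2)\times(T+1)$ (respectively $(r+3)\times(T+1)$) rectangular Vandermonde to be row-rank-deficient, contradicting Lemma~\ref{lemma:vandermonde}. Hence the nonzero eigenvalues coincide, and since $\bm{\Lambda}_{\mathrm{aug}}$ has full column rank the factor $\bm{V}_{\mathrm{aug}} = \bm{X}\,\bm{\Lambda}_{\mathrm{aug}}^{\intercal\dagger}$ is determined uniquely, giving $\bm{v}_1,\ldots,\bm{v}_r$ up to scaling and pinning the equilibrium column $\bm{c}$ (its node $1$ carries the fixed all-ones coefficients, so no scaling freedom remains). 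The same reasoning also shows $\tilde{\bm{A}}'$ carries eigenvalue $1$ with multiplicity one, so $\bm{A}'$ has no eigenvalue $1$, the fixed point $\bm{c}' = (\bm{I}-\bm{A}')^{-1}\bm{b}'$ is well defined, and $\bm{c}'=\bm{c}$.

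The step I expect to be the crux is upgrading uniqueness of the equilibrium $\bm{c}$ to the stated uniqueness of the bias $\bm{b}$. Writing $\bm{b}=(\bm{I}-\bm{A})\bm{c}$ and $\bm{b}'=(\bm{I}-\bm{A}')\bm{c}$, the desired identity $\bm{b}'=\bm{b}$ is equivalent to $(\bm{A}-\bm{A}')\bm{c}=0$. The previous step shows $\bm{A}$ and $\bm{A}'$ act identically (as multiplication by $\lambda_i$) on $\mathrm{span}(\bm{v}_1,\ldots,\bm{v}_r)$, so their difference already annihilates that range and everything reduces to the component of $\bm{c}$ lying in $\ker\bm{A}$ versus $\ker\bm{A}'$. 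Since two distinct rank-$r$ matrices can share all nonzero eigenpairs yet have different kernels, this is the delicate point: I would lean on the well-posedness and same-range hypotheses to force the equilibrium to be determined within the subspace actually excited by the data, so that its null component cannot be silently reassigned between $\bm{A}$ and $\bm{b}$. Making that final reduction rigorous—rather than merely recovering the equilibrium $\bm{c}$—is the part of the argument I would scrutinize most carefully.
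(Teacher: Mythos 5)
Your reduction is essentially the paper's own: shift by the fixed point $\bm{c}=(\bm{I}-\bm{A})^{-1}\bm{b}$ so that $\bm{X}-\bm{c}\bm{1}^{\intercal}=\bm{V}\bm{\Lambda}^{\intercal}$, treat the bias as one extra Vandermonde node at $1$ (your augmented factorization $[\bm{V}\;\bm{c}][\bm{\Lambda}\;\bm{1}]^{\intercal}$ is exactly the paper's $\tilde{\bm{\Lambda}}=[\bm{\Lambda}'\;\bm{1}]$ device), and obtain the contradiction from Lemma~\ref{lemma:vandermonde} under the shifted sample thresholds $T\ge r+1$, resp.\ $T\ge r+2$. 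That part is sound, and it recovers everything the paper's proof actually establishes: uniqueness of the nonzero eigenvalues, of the eigenvectors up to scaling, and of the equilibrium $\bm{c}$.

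The step you single out as the crux---upgrading $\bm{c}'=\bm{c}$ to $\bm{b}'=\bm{b}$---cannot be closed, and your suspicion is exactly right: the paper's proof also stops at ``$\bm{c}=\bm{c}'$ and $\bm{\Lambda}'=\bm{\Lambda}$, $\bm{V}'=\bm{V}$'' and never returns to $\bm{b}$. Since $\bm{b}'-\bm{b}=(\bm{A}-\bm{A}')\bm{c}$ and two rank-$r$ matrices sharing all nonzero eigenpairs may have different kernels, the bias can absorb the kernel component of $\bm{c}$. Concretely, take
\begin{equation*}
\bm{A}=\begin{bmatrix}2&0\\0&0\end{bmatrix},\quad \bm{b}=\begin{bmatrix}0\\1\end{bmatrix},\quad \bm{x}_1=\begin{bmatrix}1\\1\end{bmatrix}
\;\Longrightarrow\;
\bm{x}_j=\begin{bmatrix}2^{j-1}\\1\end{bmatrix},\qquad \bm{c}=\begin{bmatrix}0\\1\end{bmatrix}.
\end{equation*}
For every scalar $a$, the pair $\bm{A}'=\left[\begin{smallmatrix}2&2a\\0&0\end{smallmatrix}\right]$, $\bm{b}'=\left[\begin{smallmatrix}-2a\\ \;1\end{smallmatrix}\right]$ reproduces the data exactly, has rank $r=1$, the same nonzero eigenvalue $2$ with the same eigenvector $\bm{e}_1$, and the same fixed point $\bm{c}$; the well-posedness conditions all hold for $T\ge 2$, yet $\bm{b}'\neq\bm{b}$ whenever $a\neq 0$. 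So the honest conclusion of this theorem is uniqueness of the equilibrium $\bm{c}$ (and of the nonzero eigenpairs), not of $\bm{b}$; the bias is determined only under an additional hypothesis such as $\bm{c}\in\mathrm{span}(\bm{v}_1,\ldots,\bm{v}_r)$, a prescribed common kernel for $\bm{A}$ and $\bm{A}'$, or $r=n$. You should state and prove the $\bm{c}$ version rather than try to force the $\bm{b}$ version.
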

\begin{proof}
Since $\bm{A}$ does not have an eigenvalue of $1$, then $\bm{I} - \bm{A}$ is invertible. 
Therefore, we can shift the origin in order to express 
$\bm{x}_{j+1} = \bm{A} \bm{x}_j + \bm{b}$ 
as 
$\bm{x}_{j+1} - \bm{c} = \bm{A} ( \bm{x}_j - \bm{c} )$, 
where $\bm{c} = ( \bm{I} - \bm{A} )^{-1} \bm{b}$. 
By Lemma \ref{lemma:factor},
we may express $\bm{X} - \bm{c} \bm{1}^{\intercal}$ as
\begin{equation}
    \bm{X} - \bm{c} \bm{1}^{\intercal} = \bm{V} \bm{\Lambda}^{\intercal}.
    \label{eq:shift_origin_factor}
\end{equation}
Similarly for $\bm{A}'$ and $\bm{b}'$, $\bm{X} - \bm{c}' \bm{1}^{\intercal} = \bm{V}' \bm{\Lambda}'^{\intercal}$. Taking the difference in these equations,
\begin{equation*}
    \bm{V} \bm{\Lambda}^{\intercal} = \bm{V}' \bm{\Lambda}'^{\intercal} + (\bm{c} - \bm{c}') \bm{1}^{\intercal}.
\end{equation*}
First, assume that both $\bm{c} \neq \bm{c}'$ and that $\bm{A}$ and $\bm{A}'$ do not share all of their eigenvalues. 
(We will show that this yields a contradiction.)
Without loss of generality, let $\lambda$ be an eigenvalue of $\bm{A}$ but not $\bm{A}'$. Hence, $\bm{\lambda} = \left[ 1 \mbox{ } \lambda \mbox{ } \cdots \mbox{ } \lambda^T \right]^{\intercal}$ is a column of $\bm{\Lambda}$ but not $\bm{\Lambda}'$. Definining, $\tilde{\bm{\Lambda}} = [ \bm{\Lambda}' ~ \bm{1} ]$, then
\begin{equation*}
    \bm{V} \bm{\Lambda}^T = \begin{bmatrix} \bm{V}' & \bm{c} - \bm{c}'  \end{bmatrix} \begin{bmatrix} \bm{\Lambda}'^{\intercal} \\  \bm{1}^{\intercal} \end{bmatrix} = \begin{bmatrix} \bm{V}' & \bm{c} - \bm{c}'  \end{bmatrix} \tilde{\bm{\Lambda}}^{\intercal}.
\end{equation*} Since $\bm{V}$ has full column rank, applying $\bm{V}^{\dagger}$ on the left to both sides yields,
\begin{equation*}
      \bm{\Lambda}^{\intercal} = \bm{V}^{\dagger} \begin{bmatrix} \bm{V}' & \bm{c} - \bm{c}'  \end{bmatrix} \tilde{\bm{\Lambda}}^{\intercal}.
\end{equation*}
If we apply the orthogonal projection $\bm{I} - \tilde{\bm{\Lambda}}^{\intercal^\dagger} \tilde{\bm{\Lambda}}^{\intercal}$ on the right to both sides, we see that the right hand side is $\bm{0}$.
However, $\bm{\Lambda}^{\intercal} \left( \bm{I} - \tilde{\bm{\Lambda}}^{\intercal^\dagger} \tilde{\bm{\Lambda}}^{\intercal} \right)$ cannot be $\bm{0}$ since, by assumption, $\bm{\Lambda}$ and $\bm{\Lambda}'$ have different column spaces. To see this, consider the solution $\bm{Y}$ to $\tilde{\bm{\Lambda}} \bm{Y} = \bm{\Lambda} $. In particular, consider a single column of this equation:
\begin{equation}
    \tilde{\bm{\Lambda}} \bm{y} = \bm{\lambda}.
\label{eq:lambda_column}
\end{equation}
If $\bm{x}_1 - \bm{c}$ is in ${\rm range}(\bm{A})$, then we have
\begin{equation*}
\underbrace{
    \begin{bmatrix}
    1 & 1 & \cdots & 1 & 1 \\
    \lambda_1' & \lambda_2' & \cdots & \lambda_r' & 1 \\    
    {\lambda_1'}^2 & {\lambda_2'}^2 & \cdots & {\lambda_r^2}' & 1 \\
    \vdots & \vdots & \vdots & \vdots & \vdots \\
    {\lambda_1'}^T & {\lambda_2'}^T & \cdots & {\lambda_r'}^T & 1 
    \end{bmatrix}}_{\tilde{\bm{\Lambda}}} \bm{y}
    = 
    \begin{bmatrix}
    1 \\ \lambda \\ \lambda^2 \\ \vdots \\ \lambda^T
    \end{bmatrix}.
\end{equation*}
By Lemma \ref{lemma:vandermonde}, if $T+1 \geq r+2$, then since $\lambda, \lambda_1', \ldots, \lambda_r', 1$ are all distinct, $\bm{\lambda}$ cannot be expressed as a linear combination of the columns of $\tilde{\bm{\Lambda}}^{\intercal}$. 
(If $\bm{x}_1 - \bm{c} \notin {\rm range}(\bm{A})$, 
then we must append an extra column to $\tilde{\bm{\Lambda}}$, 
and the condition in this case is $T+1 \geq r+3$).
Thus, there does not exist a solution for $\bm{y}$ in \eqref{eq:lambda_column}, and consequently there does not exist a solution for $\bm{Y}$. Thus, $\tilde{\bm{\Lambda}} \tilde{\bm{\Lambda}}^{\dagger} \bm{\Lambda} \neq \bm{\Lambda}$. Taking the transpose and rearranging we have
\begin{equation*}
    \bm{\Lambda}^{\intercal} \left( \bm{I} - \tilde{\bm{\Lambda}}^{\intercal^{\dagger}} \tilde{\bm{\Lambda}}^{\intercal} \right) \neq \bm{0}.
\end{equation*}
This yields a contradiction, from which we conclude that either $\bm{c} = \bm{c}'$, or $\bm{A}$ and $\bm{A}'$ have the same nonzero eigenvalues, or both.

First, consider the case where $\bm{c} = \bm{c}'$. This implies that $\bm{V}' \bm{\Lambda}'^{\intercal} = \bm{V} \bm{\Lambda}^{\intercal}$, and using the same logic as Theorem \ref{theorem:uniqueness}, then the nonzero eigenvalues of $\bm{A'}$ and $\bm{A}$ are equal.

Next, suppose that the nonzero eigenvalues of $\bm{A}$ and $\bm{A}'$ are equal.  Thus, $\bm{\Lambda} = \bm{\Lambda}'$ and
\begin{equation*}
    \left( \bm{V} - \bm{V}'  \right) \bm{\Lambda}^{\intercal} = \left( \bm{c} - \bm{c}' \right) \bm{1}^{\intercal}.
\end{equation*}
Applying $\bm{I} - \bm{\Lambda}^{\intercal^{\dagger}} \bm{\Lambda}^{\intercal}$ to both sides,
\begin{equation*}
    \bm{0} = \left( \bm{c} - \bm{c}' \right) \bm{1}^{\intercal} \left( \bm{I} - \bm{\Lambda}^{\intercal^{\dagger}} \bm{\Lambda}^{\intercal} \right),
\end{equation*}
and note that the right hand side is an $n \times (T+1)$ 
rank-1 matrix.
Now, $\bm{1}^{\intercal} \left( \bm{I} - \bm{\Lambda}^{\intercal^{\dagger}} \bm{\Lambda}^{\intercal} \right) \neq \bm{0}$, since $\bm{1}$ is not in the column space of $\bm{\Lambda}$. Thus, $\bm{c} = \bm{c}'$.

In either case we have that both $\bm{c} = \bm{c}'$ and $\bm{\Lambda}' = \bm{\Lambda}$. From $\bm{X} = \bm{V} \bm{\Lambda}^{\intercal} + \bm{c} \bm{1}^{\intercal}$, we see that $\bm{V}$ is the unique solution, $\bm{V} = \left( \bm{X} - \bm{c} \bm{1}^{\intercal} \right) \bm{\Lambda}^{\intercal^{\dagger}}$.
\end{proof}

\section{Comparison of DMD with Centering to DMD without Centering}
\label{sec:comparing_without_centering}
In this section, we show that, for linear systems (dynamics generated by $\bm{x}_{j+1} = \bm{A} \bm{x}_j$), both DMD with centering and without centering can be used to compute the modes of $\bm{A}$. 
In particular, DMD with and without centering will yield the same modes, except for the background mode. For DMD without centering, this background mode corresponds to an eigenvalue equal to $1$, while for DMD with centering, this is replaced by an eigenvalue equal to $0$ (see Theorem \ref{theorem:one_rank_update_1}).

For affine systems (dynamics generated by $\bm{x}_{j+1} = \bm{A} \bm{x}_j + \bm{b}$), DMD with centering can be used to extract $\bm{b}$ and the modes of $\bm{A}$. In some cases, DMD with centering can also be used to compute the modes of $\bm{A}$ and model the dynamics of the system, but in many cases it cannot,
most notably if $\bm{A}$ is full-rank. 
Here we provide necessary and sufficient conditions
for when DMD will and will not be able to successfully model the dynamics.

We then illustrate these results with synthetic examples in Section \ref{subsec:synthetic_examples}, and show that these results generalize to the case of measurement noise in Section \ref{subsec:noise}.

\subsection{Linear Systems without Bias}
\label{subsec:DMD_equivalent_without_bias}
Consider a set of snapshots $\bm{x}_j$ which satisfy $\bm{x}_{j+1} = \bm{A} \bm{x}_j$. From Theorem \ref{theorem:uniqueness}, we know that DMD without centering can be used to extract the nonzero eigenvalues and eigenvectors of $\bm{A}$. We now show that DMD with centering can also be used to compute the same modes. In particular, if one is not an eigenvalue of $\bm{A}$, DMD with and without centering yield the same DMD modes (Theorem \ref{theorem:one_rank_update_1}). If one is an eigenvalue of $\bm{A}$, then DMD with and without centering will share the same modes, except for the background mode. For DMD without centering this mode corresponds to an eigenvalue equal to one, while for DMD with centering this is replaced with an eigenvalue equal to zero.

We first prove a useful lemma.
\begin{lemma}
Suppose we have sequential time series such that $\bm{x}_{j+1} = \bm{A} \bm{x}_j$, and the DMD problem is well-posed. Then $\bm{A}$ has an eigenvalue equal to $1$ if and only if $\bm{1}^{\intercal} \bm{X}_1^{\dagger} \bm{X} = \bm{1}^{\intercal}$.
\label{lemma:mystery}
\end{lemma}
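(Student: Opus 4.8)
The plan is to reduce the matrix identity to a statement about the row space of a rectangular Vandermonde matrix, after which Lemmas~\ref{lemma:factor} and~\ref{lemma:vandermonde} do all the work. First I would invoke Lemma~\ref{lemma:factor} to factor $\bm{X} = \bm{V}\bm{\Lambda}^{\intercal}$, and write $\bm{X}_1 = \bm{V}\bm{\Lambda}_1^{\intercal}$, where $\bm{\Lambda}_1^{\intercal}$ denotes the first $T$ columns of $\bm{\Lambda}^{\intercal}$. Under well-posedness, $\bm{V}$ has full column rank (its columns are eigenvectors for distinct eigenvalues) and $\bm{\Lambda}_1^{\intercal}$ has full row rank by Lemma~\ref{lemma:vandermonde}, using $T \geq r$ in the shared-range case and $T \geq r+1$ in the distinct-range case, where the extra row $[1,0,\dots,0]$ is the Vandermonde row for the null eigenvalue $\lambda_0 = 0$. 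Because $\bm{V}$ has full column rank and $\bm{\Lambda}_1^{\intercal}$ has full row rank, the product rule for pseudoinverses gives $\bm{X}_1^{\dagger} = (\bm{\Lambda}_1^{\intercal})^{\dagger}\bm{V}^{\dagger}$, and since $\bm{V}^{\dagger}\bm{V} = \bm{I}$ this collapses the target identity to
\[
  \bm{1}^{\intercal}\bm{X}_1^{\dagger}\bm{X} = \bm{1}^{\intercal}(\bm{\Lambda}_1^{\intercal})^{\dagger}\bm{\Lambda}^{\intercal},
\]
so I only need to analyze the purely Vandermonde quantity on the right.

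For the forward direction, suppose $\lambda_i = 1$ for some $i$. Then the $i$-th row of $\bm{\Lambda}_1^{\intercal}$ is exactly $\bm{1}^{\intercal}$ of length $T$, i.e. $\bm{1}^{\intercal} = \bm{e}_i^{\intercal}\bm{\Lambda}_1^{\intercal}$ for the standard basis vector $\bm{e}_i$. Full row rank of $\bm{\Lambda}_1^{\intercal}$ gives $\bm{\Lambda}_1^{\intercal}(\bm{\Lambda}_1^{\intercal})^{\dagger} = \bm{I}$, hence $\bm{1}^{\intercal}(\bm{\Lambda}_1^{\intercal})^{\dagger} = \bm{e}_i^{\intercal}$, and multiplying by $\bm{\Lambda}^{\intercal}$ recovers the $i$-th row of $\bm{\Lambda}^{\intercal}$, namely $[1,\lambda_i,\dots,\lambda_i^{T}] = \bm{1}^{\intercal}$ of length $T+1$. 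This is exactly the claimed identity.

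For the converse, I would read $\bm{1}^{\intercal}\bm{X}_1^{\dagger}\bm{X} = \bm{1}^{\intercal}$ as the statement that the all-ones row vector of length $T+1$ lies in the row space of $\bm{\Lambda}^{\intercal}$. But $[1,1,\dots,1]$ is precisely the rectangular Vandermonde row associated with the node $\lambda = 1$. If $1$ were not an eigenvalue of $\bm{A}$, appending this row to the rows of $\bm{\Lambda}^{\intercal}$ would produce a Vandermonde matrix built from the distinct nodes $\{\lambda_1,\dots,\lambda_r,1\}$ (or $\{0,\lambda_1,\dots,\lambda_r,1\}$ in the distinct-range case); by Lemma~\ref{lemma:vandermonde} this matrix has full row rank under the well-posedness bound on $T$, so the appended row is independent of the others, contradicting its membership in the row space. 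Hence $1$ must be an eigenvalue.

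The main obstacle is bookkeeping rather than a single hard idea. I need to confirm that the full-rank hypotheses required for the pseudoinverse product rule and for $\bm{V}^{\dagger}\bm{V}=\bm{I}$ are exactly guaranteed by the two cases of well-posedness, and that the Vandermonde rank counts line up in both the shared-range case (where $\bm{\Lambda}^{\intercal}$ has $r$ rows, needing $T\geq r$) and the distinct-range case (where the null-space row makes it $r+1$ rows, needing $T\geq r+1$). Keeping the two lengths of $\bm{1}$ straight, length $T$ inside $\bm{X}_1^{\dagger}$ versus length $T+1$ on the right-hand side, is the other place to be careful.
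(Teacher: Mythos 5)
Your proof is correct and follows essentially the same route as the paper's: both factor the data via Lemma~\ref{lemma:factor}, strip off $\bm{V}$ using the pseudoinverse product rule, and reduce the identity to the question of whether the all-ones Vandermonde row for the node $1$ lies in the row space of $\bm{\Lambda}^{\intercal}$, settled by Lemma~\ref{lemma:vandermonde}. The only organizational difference is that you treat all $T+1$ columns of $\bm{X}$ at once, whereas the paper verifies $\bm{1}^{\intercal}\bm{X}_1^{\dagger}\bm{X}_1 = \bm{1}^{\intercal}$ and $\bm{1}^{\intercal}\bm{X}_1^{\dagger}\bm{x}_{T+1} = 1$ separately; your version is marginally cleaner.
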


\begin{proof}
Let $\bm{X}_1$ have rank $r$.\footnote{In general, we define $r$ to be the rank of $\bm{A}$. In many cases, 
$\text{rank}(\bm{A}) = \text{rank}(\bm{X}_1)$. 
However, if $\bm{x}_1$ has a component in the nullspace of $\bm{A}$, then $\text{rank}(\bm{X}_1) = \text{rank}(\bm{A}) + 1$ and one of the $\lambda_j$'s will be $0$.} By Lemma \ref{lemma:factor}, we may decompose $\bm{X}$ into the product of two matrices $\bm{V}$ and $\bm{\Lambda}$ which have full column rank.

\begin{equation}
  \bm{X}_1 = \underbrace{\begin{bmatrix} 
  | & | &  & | \\
  \bm{v}_1 & \bm{v}_2 & \cdots & \bm{v}_r \\
  | & | & & |
  \end{bmatrix}}_{\bm{V}}
\underbrace{
  \begin{bmatrix}
  1 & \lambda_1 & \lambda_1^2 & \cdots & \lambda_r^{T-1} \\
  1 & \lambda_2 & \lambda_2^2 & \cdots & \lambda_2^{T-1} \\
  \vdots & \vdots & \vdots & \cdots & \vdots \\
  1 & \lambda_r & \lambda_r^2 & \cdots & \lambda_r^{T-1}
  \end{bmatrix}}_{\bm{\Lambda}^{\intercal}}
  \label{eq:x1factor}
\end{equation}
Thus, $\bm{V}^{\dagger} \bm{V} = \bm{I}$ and
\begin{equation*}
  \bm{X}_1^{\dagger} \bm{X}_1 = \bm{\Lambda}^{\dagger^{\intercal}} \bm{V}^{\dagger} \bm{V} \bm{\Lambda}^{\intercal} = \bm{\Lambda}^{\dagger^{\intercal}} \bm{\Lambda}^{\intercal} = \bm{\Lambda} (\bm{\Lambda}^{\intercal} \bm{\Lambda})^{-1} \bm{\Lambda}^{\intercal} = \bm{\Lambda} \bm{\Lambda}^{\dagger}.
\end{equation*}

First, suppose that $1$ is an eigenvalue of $\bm{A}$. Without loss of generality, let $\lambda_1 = 1$. Consider the solution $\bm{\beta}$ to the equation $\bm{\Lambda} \bm{\beta} = \bm{1}$, or more explicitly,
\begin{equation}
\begin{bmatrix}
  1 & 1 & \cdots & 1 \\
  1 & \lambda_2 & \cdots & \lambda_r \\ 
  1 & \lambda_2^2 & \cdots & \lambda_r^2  \\ 
  \vdots & \vdots & \cdots & \vdots \\
  1 & \lambda_2^{T-1} & \cdots & \lambda_k^{T-1} \\
  \end{bmatrix} \bm{\beta} = \begin{bmatrix} 1 \\ 1 \\ \vdots  \\ 1 \end{bmatrix}
  \label{eq:beta}
  \end{equation}
Clearly, there exists a solution for $\bm{\beta}$, namely $\bm{\beta} = [1 \text{ } 0 \cdots 0]^{\intercal}$. Since the columns of $\bm{\Lambda}$ are linearly independent, $\bm{\beta}$ is unique and hence $\bm{\beta} = \bm{\Lambda}^{\dagger} \bm{1} = [1 \text{ } 0 \cdots 0]^{\intercal}$ and
$\bm{X}_1^{\dagger} \bm{X}_1 \bm{1} = \bm{\Lambda} \bm{\Lambda}^{\dagger} \bm{1} = \bm{1}$. Since $\bm{X}_1^{\dagger} \bm{X}_1$ is symmetric $\bm{1}^{\intercal} = \bm{1}^{\intercal} \bm{X}_1^{\dagger} \bm{X}_1$.
To conclude the proof for this direction we must show that $\bm{1}^{\intercal} \bm{X}_1^{\dagger} \bm{x}_{T+1} = 1$.
We can express $\bm{x}_{T+1}$ as 
\begin{equation*}
\bm{x}_{T+1} = \begin{bmatrix} 
 | & | &  & | \\
 \bm{v}_1 & \bm{v}_2 & \cdots & \bm{v}_r \\
 | & | & & |
 \end{bmatrix} \underbrace{\begin{bmatrix} 1 \\ \lambda_2^T \\ \vdots \\ \lambda_r^T
 \end{bmatrix}}_{\bm{\lambda}_{T}}.
\end{equation*}
Plugging this in,
\begin{equation*}
 \bm{1}^{\intercal} \bm{X}_1^{\dagger} \bm{x}_{T+1} = \bm{1}^{\intercal} \bm{\Lambda}^{\dagger^{\intercal}} \bm{V}^{\dagger} \bm{V} \bm{\lambda}_{T} = \bm{1}^{\intercal} \bm{\Lambda}^{\dagger^{\intercal}} \bm{\lambda}_{T} = \bm{\beta}^{\intercal} \bm{\lambda}_{T+1} = 1.
\end{equation*}

Now suppose $1$ is not an eigenvalue of $\bm{A}$. Similar to \eqref{eq:beta}, we consider the equation, 
\begin{equation*}
\begin{bmatrix}
  1 & 1 & \cdots & 1 \\
  \lambda_1 & \lambda_2 & \cdots & \lambda_r \\ 
  \lambda_1^2 & \lambda_2^2 & \cdots & \lambda_r^2  \\ 
  \vdots & \vdots & \cdots & \vdots \\
  \lambda_1^{T-1} & \lambda_2^{T-1} & \cdots & \lambda_r^{T-1} \\
  \end{bmatrix} \bm{\beta} = \begin{bmatrix} 1 \\ 1 \\ \vdots  \\ 1 
  \end{bmatrix}.
  \end{equation*}
  In this case, all of the $\lambda_i$'s are distinct and not equal to $1$. By Lemma \ref{lemma:vandermonde}, since $T-1 \geq r$, $\bm{1}$ is not in the span of the columns of $\bm{\Lambda}$ and hence $\bm{\Lambda} \bm{\beta} \neq \bm{1}$ for any value of $\bm{\beta}$. Thus, $\bm{\Lambda} \bm{\Lambda}^{\dagger} \bm{1} \neq \bm{1}$ and therefore $\bm{1}^{\intercal} \bm{X}_1 \bm{X}_1^{\dagger} \neq \bm{1}^{\intercal}$.
  \end{proof}
\begin{theorem}[DMD with and without Centering for Linear Systems]
Suppose we have sequential time series snapshots $\bm{x}_1, \ldots, \bm{x}_{T+1}$ such that $\bm{x}_{j+1} = \bm{A} \bm{x}_j$ for $j = 1, \ldots, T$. If the DMD problem is well-posed, then the following holds for $\hat{\bm{A}} = \bm{X}_2 \bm{X}_1^{\dagger}$: 
\begin{enumerate}
    \item If $\hat{\bm{A}}$ has an eigenvalue equal to $1$, then $\bar{\bm{A}} = \bar{\bm{X}}_2 \bar{\bm{X}}_1^{\dagger}$ will have the same eigenvalues and corresponding eigenvectors as $\hat{\bm{A}}$, except the $1$ eigenvalue is replaced with a $0$ eigenvalue.
    \item If $\hat{\bm{A}}$ does not have an eigenvalue equal to $1$, then $\bar{\bm{A}}$ will have the same eigenvalues and corresponding eigenvectors as $\hat{\bm{A}}$.
\end{enumerate}
\label{theorem:one_rank_update_1}
\end{theorem}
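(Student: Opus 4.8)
The plan is to reduce both operators to the form ``$\bm{A}$ times an orthogonal projector'' and then read off the spectrum from which temporal modes survive centering. First I would write the mean-subtraction as right-multiplication by the centering projector $\bm{P} = \bm{I} - \bm{1}\bm{1}^{\intercal}/(\bm{1}^{\intercal}\bm{1}) \in \R^{T\times T}$, so that $\bar{\bm{X}}_1 = \bm{X}_1\bm{P}$ and $\bar{\bm{X}}_2 = \bm{X}_2\bm{P}$, noting that $\bm{P}$ is the orthogonal projection onto $\bm{1}^{\perp}$. Since the data obey $\bm{X}_2 = \bm{A}\bm{X}_1$ exactly, right-multiplying by $\bm{P}$ gives $\bar{\bm{X}}_2 = \bm{A}\bar{\bm{X}}_1$, and therefore $\bar{\bm{A}} = \bar{\bm{X}}_2\bar{\bm{X}}_1^{\dagger} = \bm{A}\,\bm{\Pi}$, where $\bm{\Pi} = \bar{\bm{X}}_1\bar{\bm{X}}_1^{\dagger}$ is the orthogonal projector onto $\mathrm{range}(\bar{\bm{X}}_1)$. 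The identical manipulation yields $\hat{\bm{A}} = \bm{A}\,\bm{\Pi}_0$, with $\bm{\Pi}_0$ the projector onto $\mathrm{range}(\bm{X}_1)$. Thus the whole theorem comes down to comparing these two ranges.

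To compare them I would invoke the factorization $\bm{X}_1 = \bm{V}\bm{\Lambda}^{\intercal}$ from Lemma~\ref{lemma:factor} and equation~\eqref{eq:x1factor}, in which $\bm{V} = [\bm{v}_1 \cdots \bm{v}_r]$ has full column rank and the rows of $\bm{\Lambda}^{\intercal}$ are the Vandermonde vectors $[1,\lambda_i,\ldots,\lambda_i^{T-1}]$. Because $\bm{V}$ has full column rank and $\bm{\Lambda}^{\intercal}$ has full row rank (Lemma~\ref{lemma:vandermonde}, using $T\geq r$), $\mathrm{range}(\bm{X}_1) = \mathrm{span}(\bm{v}_1,\ldots,\bm{v}_r)$, while $\mathrm{range}(\bar{\bm{X}}_1)$ is the image under $\bm{V}$ of the row space of $\bm{\Lambda}^{\intercal}\bm{P}$. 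The decisive point is what $\bm{P}$ does to these Vandermonde rows: it annihilates exactly the direction $\bm{1}$, so it lowers the rank of $\bm{\Lambda}^{\intercal}$ precisely when $\bm{1}$ lies in its row space. By Lemma~\ref{lemma:mystery} (equivalently, directly from Lemma~\ref{lemma:vandermonde}), $\bm{1}$ lies in $\mathrm{rowspace}(\bm{X}_1) = \mathrm{rowspace}(\bm{\Lambda}^{\intercal})$ if and only if $1$ is an eigenvalue of $\bm{A}$, which by Theorem~\ref{theorem:uniqueness} is the same as $\hat{\bm{A}}$ having an eigenvalue $1$.

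In Case~2 ($1$ not an eigenvalue) we have $\bm{1}\notin\mathrm{rowspace}(\bm{\Lambda}^{\intercal})$, so $\bm{P}$ is injective on that row space and $\bm{\Lambda}^{\intercal}\bm{P}$ still has full row rank $r$; hence $\mathrm{range}(\bar{\bm{X}}_1) = \mathrm{span}(\bm{v}_1,\ldots,\bm{v}_r) = \mathrm{range}(\bm{X}_1)$, giving $\bm{\Pi} = \bm{\Pi}_0$ and the strong conclusion $\bar{\bm{A}} = \hat{\bm{A}}$. In Case~1, take $\lambda_1 = 1$ without loss of generality; then the first row of $\bm{\Lambda}^{\intercal}$ equals $\bm{1}^{\intercal}$ and is killed by $\bm{P}$, while the remaining rows stay linearly independent after projection (since $\bm{1}$ is not in their span), so $\mathrm{range}(\bar{\bm{X}}_1) = \mathrm{span}(\bm{v}_2,\ldots,\bm{v}_r)$ has dimension $r-1$ and $\bm{\Pi}$ projects orthogonally onto it. For $i\geq 2$ we have $\bm{v}_i\in\mathrm{range}(\bm{\Pi})$, so $\bm{\Pi}\bm{v}_i = \bm{v}_i$ and $\bar{\bm{A}}\bm{v}_i = \bm{A}\bm{v}_i = \lambda_i\bm{v}_i$; thus $\bm{v}_2,\ldots,\bm{v}_r$ remain eigenvectors with eigenvalues $\lambda_2,\ldots,\lambda_r$. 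Since $\mathrm{rank}(\bar{\bm{A}}) \leq \mathrm{rank}(\bm{\Pi}) = r-1$, these $r-1$ distinct nonzero values exhaust the nonzero spectrum, so the former eigenvalue $1$ has been replaced by $0$, exactly as claimed.

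The main obstacle is the rank-and-range bookkeeping of the middle step---establishing that centering removes precisely the constant temporal mode and nothing more---which is where Lemmas~\ref{lemma:vandermonde} and~\ref{lemma:mystery} do the real work; once $\bar{\bm{A}} = \bm{A}\bm{\Pi}$ is in hand the eigenvalue and eigenvector claims are immediate. The only loose end is the well-posedness subcase $\mathrm{range}(\bm{X}_1)\neq\mathrm{range}(\bm{X}_2)$: there Lemma~\ref{lemma:factor} appends a null-space column $\bm{v}_0$ and a row $[1,0,\ldots,0]$ to the factorization, and I would carry this extra mode through the same argument, noting that it contributes only to the zero eigenspace and does not affect $\lambda_2,\ldots,\lambda_r$.
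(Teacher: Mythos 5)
Your proof is correct, but it follows a genuinely different route from the paper's. The paper computes $\bar{\bm{X}}_1^{\dagger}$ explicitly via a generalized Sherman--Morrison rank-one update of the pseudoinverse (Appendix~\ref{appendix:rank_one_update}), realizes $\bar{\bm{A}}$ as a rank-one perturbation of $\hat{\bm{A}}$ along the left eigenvector $\bm{1}^{\intercal}\bm{X}_1^{\dagger}$, and then invokes an external result on eigenvalues of rank-one-updated matrices to move the eigenvalue $1$ to $0$; the eigenvector claim is handled separately by exhibiting an explicit $\bm{A}'$ in the eigenbasis and appealing to Theorem~\ref{theorem:uniqueness}. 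You instead observe that, because the data are exactly consistent, $\hat{\bm{A}} = \bm{A}\,\bm{X}_1\bm{X}_1^{\dagger}$ and $\bar{\bm{A}} = \bm{A}\,\bar{\bm{X}}_1\bar{\bm{X}}_1^{\dagger}$ are both $\bm{A}$ composed with an orthogonal projector onto the range of the respective first data matrix, and reduce everything to whether centering shrinks that range --- which, via the factorization $\bm{X}_1 = \bm{V}\bm{\Lambda}^{\intercal}$ and Lemmas~\ref{lemma:vandermonde} and~\ref{lemma:mystery}, happens exactly when $\bm{1}$ lies in the column space of $\bm{\Lambda}$, i.e.\ when $1$ is an eigenvalue. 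This buys you a self-contained argument with no pseudoinverse update formula and no external eigenvalue-perturbation theorem, and it yields the eigenvector statement immediately from $\bm{\Pi}\bm{v}_i = \bm{v}_i$; what it gives up is the explicit closed form for $\bar{\bm{A}}$ as a rank-one update of $\hat{\bm{A}}$, which the paper reuses. Two small points to tidy: the set you push forward through $\bm{V}$ is the \emph{column} space of $\bm{\Lambda}^{\intercal}\bm{P}$ (a subspace of $\R^{r}$), not its row space, though your rank bookkeeping and conclusions are unaffected; and the subcase $\mathrm{range}(\bm{X}_1)\neq\mathrm{range}(\bm{X}_2)$, which you flag but defer, should be carried through explicitly by appending the $\bm{v}_0$ column and the $[1,0,\ldots,0]$ row of Lemma~\ref{lemma:factor} --- the extra mode only enlarges the zero eigenspace, so the argument goes through, but as written it is a promissory note rather than a proof.
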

\begin{proof}
DMD with centering obtains the centered matrix
\begin{equation}
\begin{split}
\bar{\bm{A}} &= \bar{\bm{X}}_2 \bar{\bm{X}}_1^{\dagger} = (\bm{X}_2 - \bm{\mu}_2 \bm{1}^{\intercal}) (\bm{X}_1 - \bm{\mu}_1 \bm{1}^{\intercal})^{\dagger}. 
\end{split}
\label{eq:rank_one_update}
\end{equation}
For part 1, suppose that $\hat{\bm{A}}$ has an eigenvalue equal to $1$. 
Then we have that
$(\bm{I} - \bm{X}_1^{\dagger} \bm{X}_1)^{\intercal} \bm{1} = \bm{0}$ 
by Lemma \ref{lemma:mystery}.
Applying the rank one update formula in \cite{petersen2008matrix}, which is a generalization of the Sherman-Morrison-Woodbury formula \cite{sherman1950adjustment} to the case of non-invertible matrices,
then
\begin{equation}
    \label{eq:rank_1_2}
  \left( \bm{X}_1 - \bm{\mu}_1 \bm{1}^{\intercal} \right)^{\dagger}  = \bm{X}_1^{\dagger} \left( \bm{I} - \frac{\bm{n} \bm{n}^{\intercal}}{\bm{n}^{\intercal} \bm{n}} \right),
\end{equation}
where $\bm{n} = \bm{X}_1^{\dagger^{\intercal}} \bm{1}$ (see Appendix \ref{appendix:rank_one_update}). Plugging \eqref{eq:rank_1_2} into \eqref{eq:rank_one_update}, we find that
\begin{equation*}
  \bar{\bm{A}} = \hat{\bm{A}} - \frac{1}{1 + \norm{\bm{1}^{\intercal} \bm{X}_1^{\dagger} }^2} \bm{X}_2 \bm{X}_1^{\dagger} \bm{X}_1^{\dagger^{\intercal}} \bm{1} \bm{1}^{\intercal} \bm{X}_1^{\dagger}
    = \hat{\bm{A}}
    \left( 
    \bm{I} - \frac{\bm{X}_1^{\dagger^{\intercal}} \bm{1} \bm{1}^{\intercal} \bm{X}_1^{\dagger}}{1 + \norm{\bm{1}^{\intercal} \bm{X}_1^{\dagger} }^2} 
    \right).
\end{equation*} 
Again applying Lemma \ref{lemma:mystery}, 
$\bm{1}^{\intercal} \bm{X}_1^{\dagger} \hat{\bm{A}} = \bm{1}^{\intercal} \bm{X}_1^{\dagger}$,
i.e.\ $\bm{1}^\intercal \bm{X}_1^\dagger$ is a left eigenvector of $\hat{\bm{A}}$ with eigenvalue 1.
By Theorem 2.1 in \cite{ding2007eigenvalues},
we conclude that $\bar{\bm{A}}$ shares all the same eigenvalues $\hat{\bm{A}}$,
except the eigenvalue of $1$ is replaced with $0$.

For the eigenvectors, first note that by 
Lemma \ref{lemma:factor} 
we can express $\bm{X}_1$, $\bm{X}_2$, and $\hat{\bm{A}}$, in terms of the eigenvectors of $\hat{\bm{A}}$,
\begin{equation*}
    \bm{X}_1 = \bm{V} \begin{bmatrix}
    1 & 1 & 1 & \cdots & 1 \\
    1 & \lambda_1 & \lambda_1^2 & \cdots & \lambda_1^{T-1} \\
    \vdots & \vdots & \vdots & & \vdots \\
1 & \lambda_r & \lambda_r^2 & \cdots & \lambda_r^{T-1}
    \end{bmatrix} \mbox{ }
        \bm{X}_2 = \bm{V} \begin{bmatrix}
    1 & 1 & 1 & \cdots & 1 \\
    \lambda_1 & \lambda_1^2 & \lambda_1^3 & \cdots & \lambda_1^T \\
    \vdots & \vdots & \vdots & & \vdots \\
\lambda_r & \lambda_r^2 & \lambda_r^3  &\cdots & \lambda_r^T
    \end{bmatrix} 
\end{equation*}
and
\begin{equation*}
    \hat{\bm{A}} = \bm{V} \begin{bmatrix} 1 & & & \\
& \lambda_1 & & \\
& & \ddots & \\
& & & \lambda_r \\
    \end{bmatrix} \bm{V}^{\dagger}.
\end{equation*}
In this basis, the mean-subtracted data are
\begin{equation*}
    \bar{\bm{X}}_1 =  \frac{\bm{V}}{T} 
    \begin{bmatrix}
    0 & 0 & 0 & \cdots & 0 \\
    1 - \sum_{i = 0}^{T-1} \lambda_1^i & \lambda_1 - \sum_{i = 0}^{T-1} \lambda_1^i & \lambda_1^2 - \sum_{i = 0}^{T-1} \lambda_1^i & \cdots & \lambda_1^{T-1} - \sum_{i = 0}^{T-1} \lambda_1^i \\
    \vdots & \vdots & \vdots & & \vdots \\
1 - \sum_{i = 0}^{T-1} \lambda_r^i & \lambda_r - \sum_{i = 0}^{T-1} \lambda_1^i & \lambda_r^2 - \sum_{i = 0}^{T-1} \lambda_1^i& \cdots & \lambda_r^{T-1} - \sum_{i = 0}^{T-1} \lambda_1^i
    \end{bmatrix}
\end{equation*}
\begin{equation*}
    \bar{\bm{X}}_2 =  \frac{\bm{V}}{T} 
    \begin{bmatrix}
    0 & 0 & 0 & \cdots & 0 \\
    \lambda_1 - \sum_{i = 1}^T \lambda_1^i & \lambda_1^2 - \sum_{i = 1}^T \lambda_1^i & \lambda_1^3 - \sum_{i = 1}^T \lambda_1^i & \cdots & \lambda_1^T - \sum_{i = 1}^T \lambda_1^i \\
    \vdots & \vdots & \vdots & & \vdots \\
\lambda_r - \sum_{i = 1}^T \lambda_r^i & \lambda_r^2 - \sum_{i = 1}^T \lambda_1^i & \lambda_r^3 - \sum_{i = 1}^T \lambda_1^i & \cdots & \lambda_r^T - \sum_{i = 1}^T \lambda_1^i
    \end{bmatrix} .
\end{equation*}
We immediately see that 
\begin{equation*}
    \bm{A}' = \bm{V} \begin{bmatrix} 0 & & & \\ & \lambda_1 & & \\ & & \ddots & \\ & & & \lambda_r \end{bmatrix} \bm{V}^{\dagger}
\end{equation*}
satisfies $\bar{\bm{X}}_2 = \bm{A}' \bar{\bm{X}}_1$. By Theorem \ref{theorem:uniqueness}, the nonzero eigenvalues and corresponding eigenvectors of $\bm{A}'$ must be the same as those of $\bar{\bm{A}}$. 
We conclude that the eigenvectors corresponding to the 
eigenvalues $\lambda_1, \ldots, \lambda_r$ of $\bm{A}$ and $\bar{\bm{A}}$ must be equal up to scaling.


For part 2, suppose $1$ is not an eigenvalue of $\hat{\bm{A}}$. Like before, we can explicitly compute $\bar{\bm{A}}$ as a rank one update to $\hat{\bm{A}}$ (Appendix \ref{appendix:rank_one_update}). 
Since $1$ is not an eigenvalue of $\hat{\bm{A}}$, then by 
Lemma~\ref{lemma:mystery}, 
$\left( \bm{I} - \bm{X}_1^{\dagger} \bm{X}_1 \right)^{\intercal} \bm{1} \neq \bm{0}$. 
We know that, since the data are linearly consistent, 
then the solution $\hat{\bm{A}}$ to DMD without centering satisfies 
$\bm{X}_2 = \hat{\bm{A}} \bm{X}_1$, and thus 
$\bm{X}_2 \left( \bm{I} - \bm{X}_1^{\dagger} \bm{X}_1 \right) = 0$. 
Now,
\begin{equation}
\begin{split}
\hat{\bm{A}} - \bar{\bm{A}} &= \left( \bm{X}_2 \bm{X}_1^{\dagger} - \bar{\bm{X}}_2 \bar{\bm{X}}_1^{\dagger} \right) \\
&= \left( \bm{X}_2 \bm{X}_1^{\dagger} - \bm{X}_2 \left( \bm{I} - \frac{\bm{1} \bm{1}^{\intercal}}{\bm{1}^{\intercal} \bm{1}} \right)  \left( \bm{I} - \frac{ \left( \bm{I} - \bm{X}_1^{\dagger} \bm{X}_1 \right) \bm{1} \bm{1}^{\intercal} }{\bm{1}^{\intercal}\left( \bm{I} - \bm{X}_1^{\dagger} \bm{X}_1 \right) \bm{1} } \right)  \bm{X}_1^{\dagger} \right) \\
&= \left(  \bm{X}_2  \left( \frac{ \left( \bm{I} - \bm{X}_1^{\dagger} \bm{X}_1 \right) \bm{1} \bm{1}^{\intercal} }{\bm{1}^{\intercal}\left( \bm{I} - \bm{X}_1^{\dagger} \bm{X}_1 \right) \bm{1} } \right)  \bm{X}_1^{\dagger} \right) \\
&= \bm{0}.
    \end{split}
\label{eq:rank_one_update_2}
\end{equation}
\end{proof}

\subsection{Linear Systems with Bias}
\label{subsec:linear_systems_with_bias}
We will now illustrate what can go wrong applying DMD
without centering to data generated by a linear system with bias.
Consider the system
\begin{equation*}
    \bm{x}_{j+1} = \begin{bmatrix} 2 & 0 \\ 0 & 3 \end{bmatrix} \bm{x}_j + \begin{bmatrix} 1 \\ 2 \end{bmatrix},
\end{equation*}
with $\bm{x}_1 = [1,1]^{\intercal}$.
This yields data matrices,
\begin{equation*}
    \bm{X}_1 = \begin{bmatrix}
    1 &  3 &  7  \\
    1 &  5 & 17  \\
    \end{bmatrix} \mbox{ and }
        \bm{X}_2 = \begin{bmatrix}
    3 &  7 & 15 \\
    5 & 17 & 53 \\
    \end{bmatrix}.
\end{equation*}
It can easily be shown that these data are linearly inconsistent,
since $\bm{X}_2 \left( \bm{I} - \bm{X}_1^{\dagger} \bm{X}_1 \right) \neq \bm{0}$, 
and so DMD without centering cannot accurately model the data. 

However, for some cases DMD without centering may be able to model data
generated by a linear system with bias, i.e.\ affine dynamics. 
In Theorem~\ref{theorem:dmd_conditions_affine},
we present necessary and sufficient conditions for when this is possible. 
These boil down to 
(1) having ``extra rank'' available to capture the 
bias with an eigenvector of eigenvalue 1 and
(2) a technical condition on the fixed point $\bm{c}$ and the 
eigenvectors of $\bm{A}$ as captured in $\bm{V}$.

\begin{theorem}[DMD without Centering for Affine Systems] 
    Consider data which satisfy the recursive affine equation 
    $\bm{x}_{j+1} = \bm{A} \bm{x}_j + \bm{b}$ for $j = 1, \ldots, T$, such that the affine problem is well-posed. 
    Suppose $\bm{A}$ does not have an eigenvalue equal to $1$, 
    and define the fixed point $\bm{c} = \left( \bm{I} - \bm{A} \right)^{-1} \bm{b}$. 
    Like in \eqref{eq:shift_origin_factor}, we may factor the data $\bm{X}$ as
    \begin{equation*}
    \bm{X} = \begin{bmatrix} \bm{V} & \bm{c} \end{bmatrix} \begin{bmatrix} \bm{\Lambda}^{\intercal}\\ \bm{1}^{\intercal} \end{bmatrix}.  
    \end{equation*}
    Then there exists a diagonalizable $\bm{A}$ such that 
    $\bm{x}_{j+1} = \bm{A} \bm{x}_j$ 
    if and only if $\bm{c}$ is not in the span of the columns of $\bm{V}$.
    \label{theorem:dmd_conditions_affine}
\end{theorem}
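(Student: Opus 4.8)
The plan is to prove both implications by exploiting the fact that DMD without centering can only fit a \emph{purely linear} model $\bm{x}_{j+1} = \hat{\bm{A}} \bm{x}_j$, so it succeeds precisely when the affine orbit can be reinterpreted as a linear orbit. The governing intuition is to promote the fixed point $\bm{c}$ to an eigenvector with eigenvalue $1$: since Lemma~\ref{lemma:factor} applied to the shifted dynamics gives $\bm{x}_j = \bm{c} + \sum_{i=1}^r \lambda_i^{j-1} \bm{v}_i$, any matrix that fixes $\bm{c}$ and scales each $\bm{v}_i$ by $\lambda_i$ advances every snapshot correctly. This is possible exactly when $\bm{c}$ supplies a genuinely new direction, i.e.\ when $\bm{c} \notin \mathrm{span}(\bm{V})$.

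For the ``if'' direction I would assume $\bm{c} \notin \mathrm{span}(\bm{V})$, so that $\bm{v}_1, \ldots, \bm{v}_r, \bm{c}$ are $r+1$ linearly independent vectors. I extend them to a basis of $\R^n$ and define a diagonalizable $\hat{\bm{A}}$ having $\bm{v}_1, \ldots, \bm{v}_r, \bm{c}$ as eigenvectors with eigenvalues $\lambda_1, \ldots, \lambda_r, 1$ (assigning arbitrary eigenvalues on the completing directions so that diagonalizability is automatic). A one-line computation using $\bm{x}_j = \bm{c} + \sum_i \lambda_i^{j-1} \bm{v}_i$ then yields $\hat{\bm{A}} \bm{x}_j = \bm{c} + \sum_i \lambda_i^{j} \bm{v}_i = \bm{x}_{j+1}$, establishing existence.

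For the ``only if'' direction I would argue the contrapositive: assuming $\bm{c} = \bm{V} \bm{\alpha} \in \mathrm{span}(\bm{V})$ with $\bm{c} \neq \bm{0}$ (the genuinely affine case $\bm{b} \neq \bm{0}$), I show the data are linearly inconsistent, so \emph{no} $\hat{\bm{A}}$ at all---let alone a diagonalizable one---satisfies $\bm{X}_2 = \hat{\bm{A}} \bm{X}_1$. Writing $\bm{X}_1 = \bm{V} \bm{P}_1$ and $\bm{X}_2 = \bm{V} \bm{P}_2$ with $\bm{V}$ of full column rank, the identity $\mathrm{rowspace}(\bm{X}_i) = \mathrm{rowspace}(\bm{P}_i)$ reduces existence of $\hat{\bm{A}}$ to existence of an $r \times r$ matrix $\bm{B}$ with $\bm{P}_2 = \bm{B} \bm{P}_1$ (the consistency criterion being $\mathrm{rowspace}(\bm{X}_2) \subseteq \mathrm{rowspace}(\bm{X}_1)$). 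The coefficient columns $\bm{p}_k$ obey the reduced affine recurrence $\bm{p}_{k+1} = \bm{D} \bm{p}_k + (\bm{I} - \bm{D}) \bm{\alpha}$ with $\bm{D} = \diag(\lambda_1, \ldots, \lambda_r)$, so $\bm{P}_2 = \bm{B} \bm{P}_1$ would force $(\bm{B} - \bm{D}) \bm{p}_k = (\bm{I} - \bm{D}) \bm{\alpha}$ to be constant in $k$. Differencing annihilates the constant, and since the consecutive differences $\bm{p}_{k+1} - \bm{p}_k$ equal $(\bm{D} - \bm{I})$ times the exponential vectors with entries $\lambda_i^{k-1}$, they span $\R^r$ by Lemma~\ref{lemma:vandermonde} (using distinct nonzero $\lambda_i$ and $T \geq r+1$). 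This forces $\bm{B} = \bm{D}$, hence $(\bm{I} - \bm{D}) \bm{\alpha} = \bm{0}$; as $\bm{D}$ has no unit eigenvalue, $\bm{\alpha} = \bm{0}$, contradicting $\bm{c} \neq \bm{0}$.

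I expect the ``only if'' direction to be the main obstacle. The construction in the ``if'' direction is immediate, but ruling out \emph{every} linear model---not merely the natural rank-$r$ one---requires the reduction to the $r$-dimensional coefficient recurrence followed by the Vandermonde spanning argument, whose workhorse is the distinctness of the $\lambda_i$ together with the sample-count hypothesis $T \geq r+1$. Finally, I would note that the boundary case $\mathrm{range}(\bm{X}_1) \neq \mathrm{range}(\bm{X}_2)$ is handled identically after appending the null eigenvector $\bm{v}_0$ of Lemma~\ref{lemma:factor}, which merely raises the required sample count to $T \geq r+2$, matching the fourth condition of affine well-posedness.
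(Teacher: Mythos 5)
Your proposal is correct, and while the ``if'' direction coincides with the paper's (promote $\bm{c}$ to an eigenvector with eigenvalue $1$, with linear independence of $[\bm{V}\ \bm{c}]$ by hypothesis and of $[\bm{\Lambda}\ \bm{1}]$ by Lemma~\ref{lemma:vandermonde}), your ``only if'' direction takes a genuinely different route. The paper argues entirely at the level of Vandermonde factorizations: it supposes $[\bm{V}\ \bm{c}][\bm{\Lambda}\ \bm{1}]^{\intercal} = \tilde{\bm{V}}\tilde{\bm{\Lambda}}^{\intercal}$ with both factors of full column rank, notes the rank mismatch forces $\mathrm{rank}(\tilde{\bm{\Lambda}}) < \mathrm{rank}([\bm{\Lambda}\ \bm{1}])$, and then runs a three-way case analysis on the ``spectrum'' of $\tilde{\bm{\Lambda}}$ versus that of $[\bm{\Lambda}\ \bm{1}]$, deriving a contradiction in each case. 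Because that argument goes through Lemma~\ref{lemma:factor}, it only rules out \emph{diagonalizable} linear models, which is all the theorem claims. Your reduction to the $r$-dimensional coefficient recurrence $\bm{p}_{k+1} = \bm{D}\bm{p}_k + (\bm{I}-\bm{D})\bm{\alpha}$, followed by differencing and the observation that the differences $(\bm{D}-\bm{I})\bm{e}(k)$ span $\R^r$ once $T-1 \geq r$ (Lemma~\ref{lemma:vandermonde} again, plus $\lambda_i \neq 1$), shows the stronger statement that the data are linearly \emph{inconsistent}---no matrix whatsoever satisfies $\bm{X}_2 = \hat{\bm{A}}\bm{X}_1$. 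This is cleaner than the paper's case analysis, avoids any appeal to diagonalizability in the negative direction, and connects directly to the inconsistency criterion $\bm{X}_2(\bm{I} - \bm{X}_1^{\dagger}\bm{X}_1) \neq \bm{0}$ used in the paper's motivating $2\times 2$ example. Two small points: (i) both arguments silently require $\bm{c} \neq \bm{0}$, since $\bm{c} = \bm{0}$ lies in $\mathrm{span}(\bm{V})$ yet the dynamics are then purely linear; you state this exclusion explicitly, the paper does not; (ii) your sample-count bookkeeping ($T \geq r+1$, rising to $T \geq r+2$ when $\bm{x}_1 - \bm{c}$ has a nullspace component) matches the affine well-posedness conditions, as required.
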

\begin{corollary} 
If $\bm{A}$ is full rank and has distinct eigenvalues, 
then DMD without centering will not be able to accurately represent the dynamics. 
\end{corollary}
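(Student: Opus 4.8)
The plan is to deduce this corollary directly from Theorem~\ref{theorem:dmd_conditions_affine}, whose condition for a bias-free linear model to reproduce affine data is phrased entirely in terms of whether the fixed point $\bm{c} = (\bm{I} - \bm{A})^{-1}\bm{b}$ lies in the column span of $\bm{V}$. The whole argument reduces to the single observation that, under the stated hypotheses, $\bm{V}$ spans all of $\R^{n}$, so $\bm{c}$ necessarily lies in its span and the ``if and only if'' in the theorem falls on the negative side.

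Concretely, I would first argue that the eigenvector matrix appearing in the factorization has exactly $n$ linearly independent columns. Since $\bm{A}$ has distinct eigenvalues, its eigenvectors $\bm{v}_1, \ldots, \bm{v}_n$ are linearly independent; since $\bm{A}$ is full rank, all of these eigenvalues are nonzero, so $r = n$ and no extra nullspace column $\bm{v}_0$ is appended in Lemma~\ref{lemma:factor}. Hence $\bm{V} = [\bm{v}_1 \ \cdots \ \bm{v}_n]$ is an $n \times n$ invertible matrix, and its columns form a basis of $\R^{n}$. In particular, the fixed point $\bm{c} \in \R^{n}$ lies in $\mathrm{span}(\bm{V})$, and this holds for \emph{every} admissible $\bm{c}$, irrespective of its value. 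Applying the contrapositive of Theorem~\ref{theorem:dmd_conditions_affine}, no diagonalizable bias-free matrix $\bm{A}'$ can satisfy $\bm{x}_{j+1} = \bm{A}' \bm{x}_j$ on these snapshots, so the affine data are linearly inconsistent; DMD without centering is then forced to fit with a nonzero residual and cannot accurately represent the dynamics.

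There is essentially no hard step here: the corollary is a clean specialization of the preceding theorem, and the only thing to verify carefully is that the full-rank and distinct-eigenvalue hypotheses together guarantee $r = n$ with $\bm{V}$ spanning $\R^{n}$ (rather than a proper subspace that might fail to contain $\bm{c}$). The one point worth stating explicitly, to keep the statement non-vacuous, is that we take $\bm{b} \neq \bm{0}$ so that the system is genuinely affine; note, however, that the argument does not even require $\bm{c} \neq \bm{0}$, since the span condition is satisfied automatically once $\bm{V}$ is square and invertible.
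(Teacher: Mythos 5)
Your proposal is correct and matches the paper's (implicit) derivation: the corollary is stated as an immediate consequence of Theorem~\ref{theorem:dmd_conditions_affine}, and your observation that full rank plus distinct eigenvalues forces $\bm{V}$ to be an invertible $n\times n$ matrix, so that $\bm{c}$ automatically lies in its span, is exactly the intended argument. Your side remark that $\bm{b}\neq\bm{0}$ is needed to keep the statement non-vacuous is a fair point that the paper leaves unstated.
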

\begin{proof}
Suppose that $\bm{c}$ is not in the span of the columns of $\bm{V}$.
Lemma \ref{lemma:factor} states that fitting a linear model 
($\bm{x}_{j+1} = \bm{A} \bm{x}_j$) where $\bm{A}$ is diagonalizable is equivalent to the data satisfying
\begin{equation}
  \bm{X} = \tilde{\bm{V}} \tilde{\bm{\Lambda}}^{\intercal},
  \label{eq:tilde_factor}
\end{equation}
where $\tilde{\bm{V}}$ and $\tilde{\bm{\Lambda}}$ have full column rank and $\tilde{\bm{\Lambda}}$ is a rectangular Vandermonde matrix. 
hDefine $\tilde{\bm{V}} = \left[ \bm{V} \mbox{ } \bm{c} \right]$ and 
$\tilde{\bm{\Lambda}} = \left[ \bm{\Lambda} \mbox{ } \bm{1} \right]$.
Then $\bm{X}$ takes the form of \eqref{eq:tilde_factor}. 
By assumption, the columns of $\tilde{\bm{V}}$ are linearly independent,
and, since $1$ is not an eigenvalue of $\bm{A}$, by Lemma \ref{lemma:vandermonde},
the columns of $\tilde{\bm{\Lambda}}$ are linearly independent.
With this factorization $\bm{X}$ satisfies a linear model.
By Theorem \ref{theorem:uniqueness}, reading off from $\tilde{\bm{\Lambda}}$,
the modes of DMD without centering will be the same 
eigenvalues and corresponding eigenvectors of DMD with centering with an additional eigenvalue equal to $1$.

Now suppose that $\bm{c}$ is in the span of the columns of $\bm{V}$. 
To show that a linear system cannot model the data, we will use proof by contradiction.
Suppose that we can express 
\[
[\bm{V} \mbox{ } \bm{c}] [ \bm{\Lambda} \mbox{ } \bm{1}]^{\intercal} 
= \tilde{\bm{V}}\tilde{\bm{\Lambda}}^{\intercal},
\]
where $\tilde{\bm{V}}$ and $\tilde{\bm{\Lambda}}$ have full column rank
(Lemma \ref{lemma:factor}).
Since 
$\mathrm{rank}([\bm{V} \mbox{ } \bm{c}]) 
< 
\mathrm{rank}( [ \bm{\Lambda} \mbox{ } \bm{1}])$, 
this requires that
$\mathrm{rank}(\tilde{\bm{\Lambda}})
< 
\mathrm{rank}( [ \bm{\Lambda} \mbox{ } \bm{1}])$.

For a Vandermonde matrix $\bm{M}$, let
$\sigma(\bm{M})$ denote its ``spectrum,'' i.e.\ the eigenvalues that generate the columns,
so that $\sigma(\bm{\Lambda}) = \{\lambda_1, \ldots, \lambda_r \}$.
We have three cases:
\begin{caseof}
    \case{$\sigma(\tilde{\bm{\Lambda}})$ is a proper subset of $\sigma([ \bm{\Lambda} \mbox{ } \bm{1}])$.}
    In this case, we can partition the columns of $[ \bm{\Lambda} \mbox{ } \bm{1}]$ 
to form two Vandermonde matrices $\tilde{\bm{\Lambda}}$ and $\bm{\Lambda}'$,
where the spectrum of 
$\sigma(\bm{\Lambda}') = 
\sigma([ \bm{\Lambda} \mbox{ } \bm{1} ]) \setminus \sigma(\tilde{\bm{\Lambda}})$. 
Employing the same partition on the columns of $[ \bm{V} \mbox{ } \bm{c} ]$, 
we can form matrices $\bm{V}_1$ and $\bm{V}_2$, such that
\begin{equation}
    \begin{bmatrix} \bm{V}_1 & \bm{V}_2 \end{bmatrix} \begin{bmatrix} \tilde{\bm{\Lambda}}^{\intercal} \\ \bm{\Lambda}'^{\intercal} \end{bmatrix} = \tilde{\bm{V}} \tilde{\bm{\Lambda}}^{\intercal}.
    \label{eq:v1v2}
 \end{equation}
Consider the solution to 
\begin{equation*}
  \bm{B} \begin{bmatrix} \tilde{\bm{\Lambda}}^{\intercal} \\ \bm{\Lambda}'^{\intercal} \end{bmatrix} = \tilde{\bm{\Lambda}}^{\intercal}.
\end{equation*}
By assumption, the columns of 
$[ \tilde{\bm{\Lambda}} \mbox{ } \bm{\Lambda}' ]$ 
are linearly independent, 
and so there is a unique solution
$\bm{B}
=
\tilde{\bm{\Lambda}}^{\intercal} 
\begin{bmatrix} 
    \tilde{\bm{\Lambda}}^{\intercal} \\ 
    \bm{\Lambda}'^{\intercal} 
\end{bmatrix}^{\dagger} 
= 
\left[\bm{I} \mbox{ } \bm{0} \right]$.
Multiplying both sides of \eqref{eq:v1v2} on the right
by 
$\begin{bmatrix} 
    \tilde{\bm{\Lambda}}^{\intercal} \\ 
    \bm{\Lambda}'^{\intercal} 
\end{bmatrix}^{\dagger}$,
we find
\begin{equation*}
 \begin{bmatrix} \bm{V}_1 & \bm{V}_2 \end{bmatrix} 
 = 
 \tilde{\bm{V}} \tilde{\bm{\Lambda}}^{\intercal} 
 \begin{bmatrix} 
    \tilde{\bm{\Lambda}}^{\intercal} \\ 
    \bm{\Lambda}'^{\intercal} 
\end{bmatrix}^{\dagger}  
= \begin{bmatrix} \tilde{\bm{V}} & \bm{0} \end{bmatrix}.
\end{equation*}
However, $\bm{V}_2$ cannot be $\bm{0}$, since it comes from a partition of the nonzero 
columns of $[ \bm{V} \mbox{ } \bm{c} ]$.
    \case{$\sigma(\tilde{\bm{\Lambda}}) = \sigma([ \bm{\Lambda} \mbox{ } \bm{1}])$.}
    The two spectra cannot be equal, since this would imply that 
    $\mathrm{rank}(\tilde{\bm{\Lambda}})
    =
    \mathrm{rank}( [ \bm{\Lambda} \mbox{ } \bm{1}])$.
    \case{There exists some 
    $\lambda \in \sigma( \tilde{\bm{\Lambda}})$ 
    with
    $\lambda \not\in \sigma([ \bm{\Lambda} \mbox{ } \bm{1} ])$.}
    Therefore,
    $\tilde{\bm{\Lambda}}$ 
    contains a column which is not in the column space of 
    $[ \bm{\Lambda} \mbox{ } \bm{1} ]$.
    This means that there is a column in $\tilde{\bm{\Lambda}}$ which is linearly independent
    from the columns of $[ \bm{\Lambda} \mbox{ } \bm{1} ]$.
    Thus, there does not exist a linear combination $\bm{C}$ such that
    \begin{equation*}
      \bm{C} 
        \begin{bmatrix} 
            \bm{\Lambda}^{\intercal} \\ 
            \bm{1}^{\intercal} 
        \end{bmatrix} 
      = \tilde{\bm{V}} \tilde{\bm{\Lambda}}^{\intercal} ,
    \end{equation*}
which is a contradiction.
\end{caseof}
\end{proof}

\subsection{Synthetic Examples}
\label{subsec:synthetic_examples}
\begin{figure}
    \centering
    \begin{overpic}[width=\textwidth]{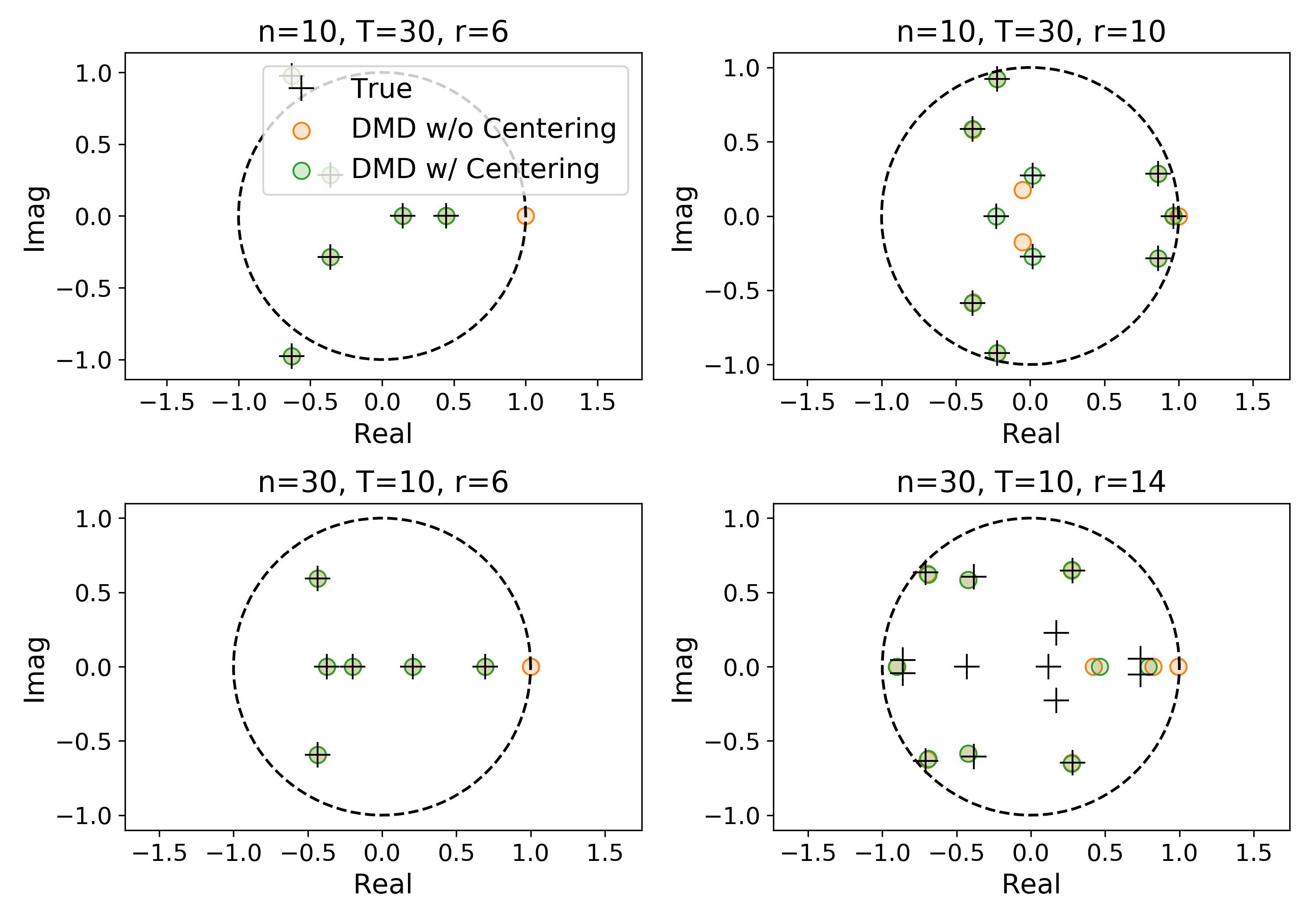}
    \put(3,68){a)}
    \put(3,35){c)}
    \put(50,68){b)}
    \put(50,35){d)}
    \end{overpic}
    \caption{Comparison of the eigenvalues from DMD with centering (green) and DMD without centering (orange) to the true eigenvalues (black) of $\bm{A}$ for four different affine systems $\bm{x}_{j+1} = \bm{A} \bm{x}_j + \bm{b}$. a) $n < T$ and $\bm{A}$ is low rank. DMD with and without centering both yield the eigenvalues of $\bm{A}$, except DMD with centering has an extra eigenvalue of $1$, corresponding to the background mode. b) $n < T$ and $\bm{A}$ is full rank. DMD with centering yields the true eigenvalues of $\bm{A}$ while DMD without centering does not. c) $n > T$ and $\bm{A}$ is low rank. This yields the same result as a). d) Since $T < r$, the DMD problem is not well-posed and neither DMD with centering nor DMD without centering yields the eigenvalues of $\bm{A}$.}
    \label{fig:DMDwithandwithoutcentering}
\end{figure}
To review our results so far, 
we compare the eigenvalue spectra from DMD with centering and DMD without centering for four sets of measurements of affine systems $\bm{x}_{j+1} = \bm{A} \bm{x}_j + \bm{b}$.
The results are shown in Figure~\ref{fig:DMDwithandwithoutcentering}.
The two top spectra correspond to data with $n < T$, while the bottom two correspond to $n > T$. 

For $n < T$ if $\bm{A}$ is low rank ($r < n$), then DMD without centering has the same spectra as DMD with centering, but with an additional eigenvalue equal to $1$. If $\bm{A}$ is full rank ($r = n$), DMD with centering computes the correct modes. However, DMD without centering cannot accommodate the affine term and does not compute the correct eigenvalues and yields a poor one step reconstruction of $\norm{\bm{X}_2 - \hat{\bm{A}} \bm{X}_1} = 0.019$. 

For $n > T$ with $\bm{A}$ low rank, DMD without centering has the same spectrum as DMD with centering, but with an additional eigenvalue equal to $1$. 
If $r>T$, by Theorem \ref{theorem:uniqueness} the system is under sampled then the DMD problem is not well-posed and the modes of $\bm{A}$ are not unique. Consequently, the DMD modes for both with and without centering do not equal the true modes of $\bm{A}$. That being said, since the data are linearly consistent, all of the models are able to reconstruct the data.

\subsection{The Effects of Noise}
\label{subsec:noise}
In Theorem \ref{theorem:one_rank_update_1} we showed that,
for a linear system,
DMD with centering and DMD without centering will yield the same modes, except that the constant mode without centering is replaced with a zero mode.
However, one of the key assumptions in our proofs is that there exists $\bm{A}$, so that $\bm{x}_{j+1} = \bm{A} \bm{x}_j$. 
For real data with measurement noise, 
this assumption will not hold.
We find empirically that these predictions do hold true, 
with uncertainty on the order of the noise level. 

We simulated data
$
 	\bm{Y} = \bm{X} + \eta \bm{Z},
$
where the elements of $\bm{Z}$ are from a standard normal distribution and perform DMD.
We performed this for 
500 instantiations of $\bm{Z}$ ranging from 
$\eta = 10^{-9}$ to $1$, and with $n=10$, $T = 30$, and $r = 7$.
For both DMD with centering and DMD without centering, 
we compute the sum of the distances from the computed eigenvalues
of $\bar{\bm{A}}$ and $\hat{\bm{A}}$ 
to the nearest true eigenvalue of $\bm{A}$,
then report the median.

Our results are shown in Figure~\ref{fig:noise_analysis}.
The eigenvalue distances, shown at top, for
$\hat{\bm{A}}$ and $\bar{\bm{A}}$ scale linearly with $\eta$. 
These distances are very close for these two methods. 
Note that, when computing the sums for DMD without centering, 
we exclude the eigenvalue closest to one to establish a fairer 
comparison between these two methods. 

For a specific example, in the bottom of
Figure \ref{fig:noise_analysis} 
we plot the eigenvalues computed using these two methods for 
$\eta = 0.005$
over 100 realizations of the noise.
The black crosses show the true eigenvalues. As expected, the deviations of the eigenvalues from the true values are roughly the same. 
Note the presence of the additional eigenvalue equal to 1
for DMD without centering.
\begin{figure}
\centering
\begin{overpic}[width=0.9\textwidth]{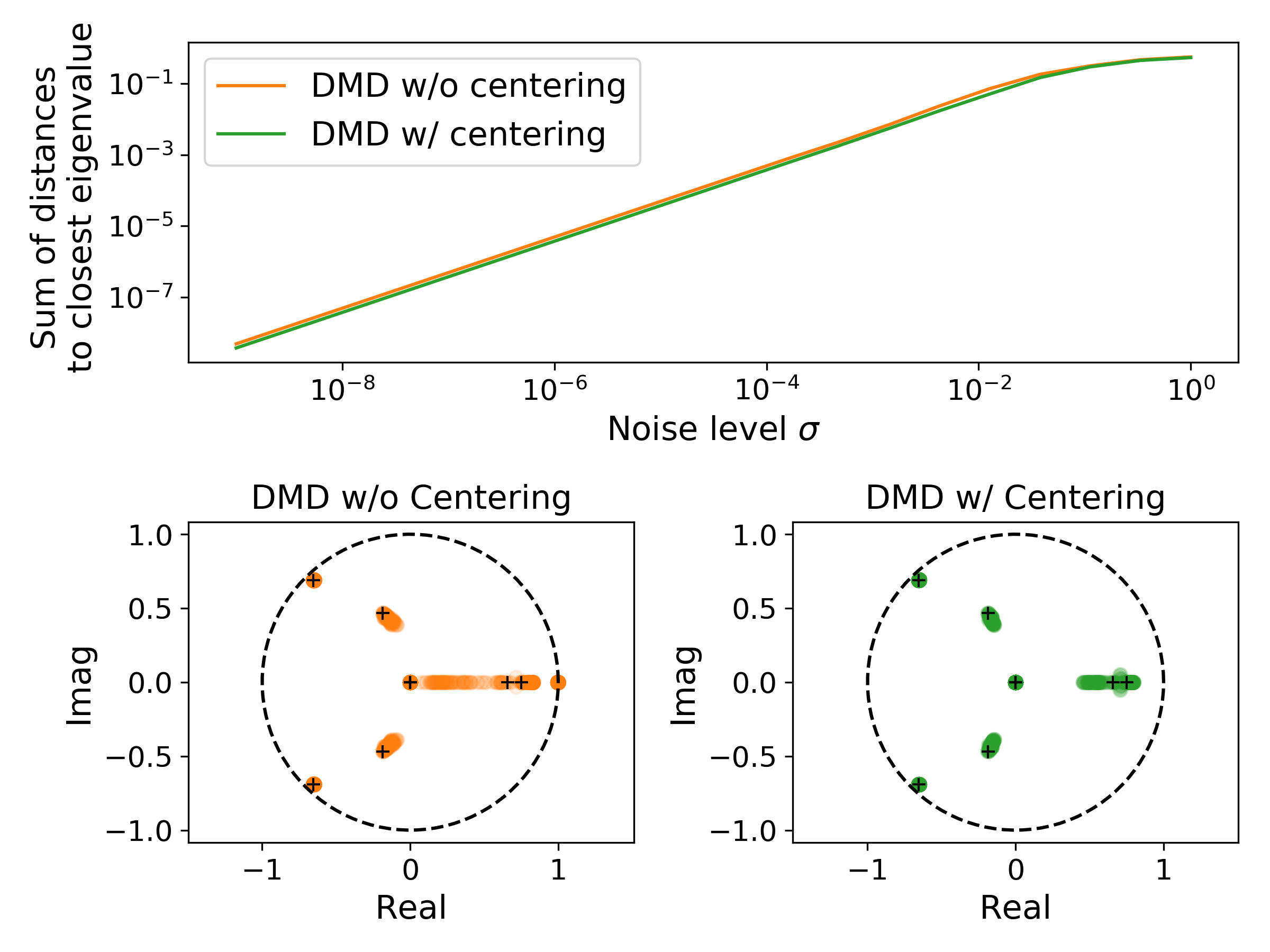}
\put(-5,73){a)}
\put(-5,35){b)}
\end{overpic}
\caption{Comparison of DMD with centering and DMD without centering in the presence of measurement noise. a) For fixed $\bm{A}$, we compute the sum of the distances from the computed eigenvalues of $\bm{A}$ to the nearest true eigenvalue. For both methods, the sum scales linearly with noise level. b) For fixed noise level $0.005$, we plot the eigenvalues of $\bm{A}$ computed using both method for $100$ instantiations of noise. The fluctuations of the eigenvalues from the true values (black crosses) are roughly the same for both methods.}
\label{fig:noise_analysis}
\end{figure}

\section{DMD with Centering is not a Temporal Discrete Fourier Transform}
\label{sec:DFT}

Similar to DMD, the temporal discrete Fourier transform (temporal DFT) can be used to decompose the times series data 
$\bm{x}_1, \ldots \bm{x}_{T+1}$ into linear combinations of modes with exponential time dependence. 
In particular, the temporal DFT is defined as~\cite{daubechies1992ten,harris1978use},
\begin{equation*}
  \hat{\bm{x}}_j := \left\{\frac{1}{T + 1} \sum_{k = 1}^{T+1} \exp{\left( -\frac{2 \pi i (j-1) (k-1)}{T + 1} \right)} \bm{x}_k \right\},
\end{equation*}
with inverse transform 
\begin{equation}
\bm{x}'_k := \left\{ \sum_{k = 1}^{T+1} \exp{\left( \frac{2 \pi i (j-1)(k-1)}{T + 1} \hat{\bm{x}}_k \right) } \right\}.
\label{eq:inverse_fourier_transform}
\end{equation}
In \cite{chen2012variants}, Chen argues that when subtracting the mean $\bm{\mu} = \frac{1}{T+1} \sum_{j = 1}^{T+1} \bm{x}_j$ the eigenvalues $l_j$ of the companion matrix $\bm{C}$ are independent of the data:
\begin{equation*}
  l_j = \exp{\left( \frac{2 \pi i j}{T+1} \right)} \text{ } j = 1, \ldots, T+1.
\end{equation*}
and the eigenvectors $\bm{w}_j$ are given by
\begin{equation*}
  \bm{x}_k = \sum_{j = 2}^{T+1} \exp{\left( \frac{2 \pi i (j-1)(k-1)}{T+1} \right)} \bm{w}_j.
\end{equation*}
Comparing this to \eqref{eq:inverse_fourier_transform} we see that the eigenvectors of the companion matrix correspond to those given by the temporal DFT. Most significantly, this has the unintended consequence of restricting the eigenvalues to be roots of unity. 

It is important to note that this argument is based 
(1) the companion matrix approach and
(2) the  fact that the companion matrix is unique and hence the data matrix 
$\bm{X}_1 - \bm{\mu} \bm{1}^{\intercal}$ has full effective rank. 
Clearly, subtracting the mean $\bm{\mu}_1$ from $\bm{X}_1$ 
will guarantee 
$\bm{X}_1 - \bm{\mu}_1 \bm{1}^{\intercal}$
to be low-rank and therefore have linearly dependent columns. 
Hence, the eigenvalues from the companion matrix approach 
will not equal those from SVD-based DMD, 
and SVD-based DMD will not be equal to the DFT. 
However, if there is a stationary mode, then 
$\bm{X}_1 - \bm{\mu} \bm{1}^{\intercal}$
will also have low effective rank.



\begin{proposition}
Suppose we have sequential time series such that $\bm{x}_{j + 1} = \bm{A} \bm{x}_j$ which are used to define the matrices $\bm{X}$ and $\bm{X}_1$ as in \eqref{eq:x1andx2} and \eqref{eq:Xfactorization}. If
\begin{enumerate}
  \item the DMD problem is well-posed,
  \item $\hat{\bm{A}} = \bm{X}_2 \bm{X}_1^{\dagger}$ has eigenvalue $1$, and
  \item $\bm{X}$ has nonzero mean $\bm{\mu} = \frac{1}{T + 1} \bm{X} \bm{1} \neq 0 $, 
\end{enumerate}
then 
$\mathrm{rank}(\bm{X}_1 - \bm{\mu} \bm{1}^{\intercal})
\leq
\mathrm{rank}(\bm{X}_1) - 1.
$
\end{proposition}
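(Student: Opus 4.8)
The plan is to exploit the rectangular Vandermonde factorization of the data and to show that centering annihilates precisely the row generated by the unit eigenvalue. First I would invoke Lemma~\ref{lemma:factor} (in the concrete forms \eqref{eq:x1factor} and \eqref{eq:Xfactorization}) to write $\bm{X}_1 = \bm{V}\bm{\Lambda}_1^{\intercal}$ and $\bm{X} = \bm{V}\bm{\Lambda}^{\intercal}$, where $\bm{V}\in\R^{n\times r}$ collects the $r$ eigenvectors and has full column rank, $\bm{\Lambda}^{\intercal}$ is the $r\times(T+1)$ Vandermonde matrix whose $i$-th row is $[1,\lambda_i,\ldots,\lambda_i^{T}]$, and $\bm{\Lambda}_1^{\intercal}$ consists of its first $T$ columns. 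By Lemma~\ref{lemma:vandermonde}, both factors have full column rank, so $r=\mathrm{rank}(\bm{X}_1)$. Next I would locate the unit eigenvalue: since $\hat{\bm{A}}$ has eigenvalue $1$ and the DMD problem is well-posed, Theorem~\ref{theorem:uniqueness} (the nonzero eigenvalues are shared) together with Lemma~\ref{lemma:mystery} forces $\bm{A}$ to have the nonzero eigenvalue $1$, so without loss of generality $\lambda_1=1$. This is the crux, because it makes the first row of $\bm{\Lambda}_1^{\intercal}$ equal to the all-ones vector $\bm{1}_T^{\intercal}\in\R^{T}$.

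I would then compute the mean in this basis. Writing $\bm{\mu}=\frac{1}{T+1}\bm{X}\bm{1}_{T+1}=\bm{V}\bm{g}$ with $\bm{g}=\frac{1}{T+1}\bm{\Lambda}^{\intercal}\bm{1}_{T+1}$, we see that $\bm{\mu}$ lies in the column space of $\bm{V}$, and its first coordinate is $g_1=\frac{1}{T+1}\sum_{k=0}^{T}\lambda_1^{k}=\frac{1}{T+1}(T+1)=1$. Consequently the centered matrix factors as $\bm{X}_1-\bm{\mu}\bm{1}_T^{\intercal}=\bm{V}\bigl(\bm{\Lambda}_1^{\intercal}-\bm{g}\bm{1}_T^{\intercal}\bigr)$, and since $\bm{V}$ has full column rank, $\mathrm{rank}(\bm{X}_1-\bm{\mu}\bm{1}_T^{\intercal})=\mathrm{rank}(\bm{\Lambda}_1^{\intercal}-\bm{g}\bm{1}_T^{\intercal})$. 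The first row of this $r\times T$ matrix is $\bm{1}_T^{\intercal}-g_1\bm{1}_T^{\intercal}=\bm{0}$, so the matrix has at most $r-1$ nonzero rows and hence rank at most $r-1=\mathrm{rank}(\bm{X}_1)-1$, which is the claim.

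The only genuine obstacle is bookkeeping around the two range cases of Lemma~\ref{lemma:factor}. When $\mathrm{range}(\bm{X}_1)\neq\mathrm{range}(\bm{X}_2)$, the Vandermonde acquires an extra row $[1,0,\ldots,0]$ coming from a nullspace direction $\bm{v}_0$; here I would simply check that this row is distinct from the $\lambda_1=1$ row and therefore does not disturb the cancellation, so the same first-row-annihilation argument still yields the bound. It is also worth remarking that the nonzero-mean hypothesis (3) is not an extra assumption but is automatically guaranteed by (1)--(2): because $g_1=1$ forces $\bm{g}\neq\bm{0}$ and $\bm{V}$ is injective, we get $\bm{\mu}=\bm{V}\bm{g}\neq\bm{0}$. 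The rank drop is thus driven entirely by the presence of the unit eigenvalue, and condition (3) merely records that the statement is non-vacuous.
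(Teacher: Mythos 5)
Your proof is correct, but it takes a genuinely different route from the paper's. The paper constructs an explicit witness vector $\bm{\alpha} = \frac{1}{T+1}\left(\bm{1} + \bm{X}_1^{\dagger}\bm{x}_{T+1}\right)$, shows $\bm{X}_1\bm{\alpha} = \bm{\mu} \neq \bm{0}$ (so $\bm{\alpha} \notin \mathrm{Null}(\bm{X}_1)$), and then uses Lemma~\ref{lemma:mystery} to get $\bm{1}^{\intercal}\bm{X}_1^{\dagger}\bm{x}_{T+1} = 1$, which makes $\left(\bm{X}_1 - \bm{\mu}\bm{1}^{\intercal}\right)\bm{\alpha} = \bm{0}$; the rank drop is then read off from the enlarged nullspace. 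You instead work entirely in the Vandermonde factorization of Lemma~\ref{lemma:factor}: locating $\lambda_1 = 1$ (via Theorem~\ref{theorem:uniqueness} and Lemma~\ref{lemma:mystery}, the same maneuver the paper uses implicitly), computing $\bm{\mu} = \bm{V}\bm{g}$ with $g_1 = 1$, and observing that the row of $\bm{\Lambda}_1^{\intercal} - \bm{g}\bm{1}^{\intercal}$ generated by the unit eigenvalue vanishes identically. Your version has two advantages: the zero row gives the inequality $\mathrm{rank}(\bm{X}_1 - \bm{\mu}\bm{1}^{\intercal}) \leq r - 1$ immediately and unambiguously (the paper's step from ``one vector lies in $\mathrm{Null}(\bm{X}_1 - \bm{\mu}\bm{1}^{\intercal}) \setminus \mathrm{Null}(\bm{X}_1)$'' to a strict nullity increase is stated rather than argued), and your observation that hypothesis (3) is implied by (1)--(2), since $g_1 = 1$ forces $\bm{g} \neq \bm{0}$ and $\bm{V}$ is injective, is a genuine (if minor) sharpening of the statement. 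What the paper's approach buys in exchange is that it stays closer to the pseudoinverse machinery used throughout Section~5 and does not require re-deriving the mean in eigencoordinates; it also handles the two range cases of Lemma~\ref{lemma:factor} without the extra bookkeeping you correctly flag (and correctly resolve, since the appended row $[1,0,\ldots,0]$ corresponds to the ``eigenvalue'' $0 \neq 1$ and leaves the cancellation intact).
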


\begin{proof}
To prove that $\bm{X}_1 - \bm{\mu} \bm{1}^{\intercal}$ is rank deficient, we will show that there exists a nonzero vector $\bm{v}$ which is in the nullspace of $\bm{X}_1 - \bm{\mu} \bm{1}^{\intercal}$, but not in the nullspace of $\bm{X}_1$. 

First, we need to show that $\text{range}(\bm{X}_2) \subseteq \text{range}(\bm{X}_1)$. To see this, we only need to consider the case where $\text{range}(\bm{X}_2) \neq \text{range}(\bm{X}_1)$.
Then, since the DMD problem is well-posed, $T \geq r+1$. Thus, $\bm{X}_1$ contains at least $r$ linearly independent vectors which are in the range of $\hat{\bm{A}}$. Thus, $\text{range}(\hat{\bm{A}}) \subseteq \text{range}(\bm{X}_1)$, and since $\bm{X}_2 = \hat{\bm{A}} \bm{X}_1$, then $\text{range}(\bm{X}_2) \subseteq \text{range}(\hat{\bm{A}}) \subseteq \text{range}(\bm{X}_1)$. 

Since, $\text{range}(\bm{X}_2) \subseteq \text{range}(\bm{X}_1)$, then there exists $\bm{c} \in \R^{n-1}$ such that $\bm{X}_1 \bm{c} = \bm{x}_n$. One possible solution to this is $\bm{c} = \bm{X}_1^{\dagger} \bm{x}_{T+1}$. Define $\bm{\alpha} = \frac{\bm{1} + \bm{X}_1^{\dagger} \bm{x}_{T+1}}{T + 1}$. By definition, 
\begin{equation*}
\begin{split}
\bm{0} \neq \bm{\mu} &= \frac{1}{T+1} \bm{X} \bm{1} \\
&=  \frac{1}{T+1} \left( \bm{X}_1 \bm{1}  + \bm{x}_{T+1} \right) \\
&= \frac{1}{T+1}  \bm{X}_1 \left( \bm{1} + \bm{c}  \right) \\
&= \bm{X}_1 \bm{\alpha}.
\end{split}
\end{equation*}
Thus, $\bm{\alpha}$ is not in the nullspace of $\bm{X}_1$ and therefore cannot be $\bm{0}$.
By Lemma \ref{lemma:mystery}, since $\hat{\bm{A}}$ has eigenvalue $1$, then $\bm{1}^{\intercal} \bm{X}_1^{\dagger} \bm{x}_{T+1} = \bm{1}^{\intercal} \bm{c} = 1$. Thus, 
\begin{equation*}
\begin{split}
  \left( \bm{X}_1 - \bm{\mu} \bm{1}^{\intercal} \right) \bm{\alpha} &= \bm{X}_1  \left( \bm{I} - \frac{1}{T+1} \left(\bm{1} + \bm{c} \right) \bm{1}^{\intercal} \right) \frac{(\bm{1} + \bm{c})}{T+1} \\
  &= \bm{X}_1  \frac{(\bm{1} + \bm{c})}{T+1} - \bm{X}_1 \frac{1}{T+1} \left( \bm{1} + \bm{c} \right) \bm{1}^{\intercal}\frac{(\bm{1} + \bm{c})}{T+1} \\
  &= \bm{X}_1  \frac{(\bm{1} + \bm{c})}{T+1} - \bm{X}_1  \frac{(\bm{1} + \bm{c})}{T+1} \\ 
  &= \bm{0}.
\end{split}
\end{equation*}
In conclusion, the dimension of the null space of the centered data $(\bm{X}_1 - \bm{\mu} \bm{1}^{\intercal})$ must be greater than the dimension of the null space of the uncentered data $(\bm{X}_1)$ and the centered data must have a lower rank than the uncentered data.
\end{proof}

\begin{remark} Even in the case where the system has measurement noise, $\bm{X}_1 - \bm{\mu} \bm{1}^{\intercal}$ is effectively rank deficient. Thus, the companion matrix modes are not the DMD modes even if the data is mean subtracted using $\bm{\mu}$.
\end{remark}

To illustrate this point we generate data for an affine system $\bm{x}_{j+1} = \bm{A} \bm{x}_j + \bm{b}$ \eqref{eq:affine} with $n = 10$, $T = 7$, and $r = 5$. 
In Figure \ref{fig:DFT}, we plot the true modes (crosses),
the spectrum computed with DMD with centering and the spectrum computed using the companion matrix on data with the total mean subtracted. We see that DMD with centering extracts out the correct modes. However, the companion matrix approach does not. Since the data $\bm{X}_1 - \bm{\mu} \bm{1}^{\intercal}$ is low rank, the eigenvalues of the companion matrix are not the roots of unity. In addition, note that the companion matrix has seven nonzero eigenvalues even though $\bm{A}$ has only $5$. Next, we add some Gaussian distributed measurement noise with zero mean and standard deviation $0.001$. Like the noiseless case, DMD with centering extracts the correct eigenvalues. Since the data are full rank the companion matrix approach yields a temporal DFT. However, since the data has low effective rank, the eigenvalues of the companion matrix do not equal the eigenvalues of $\bm{A}$. 
\begin{figure}
\centering
\begin{overpic}[scale=0.6,unit=1bp]{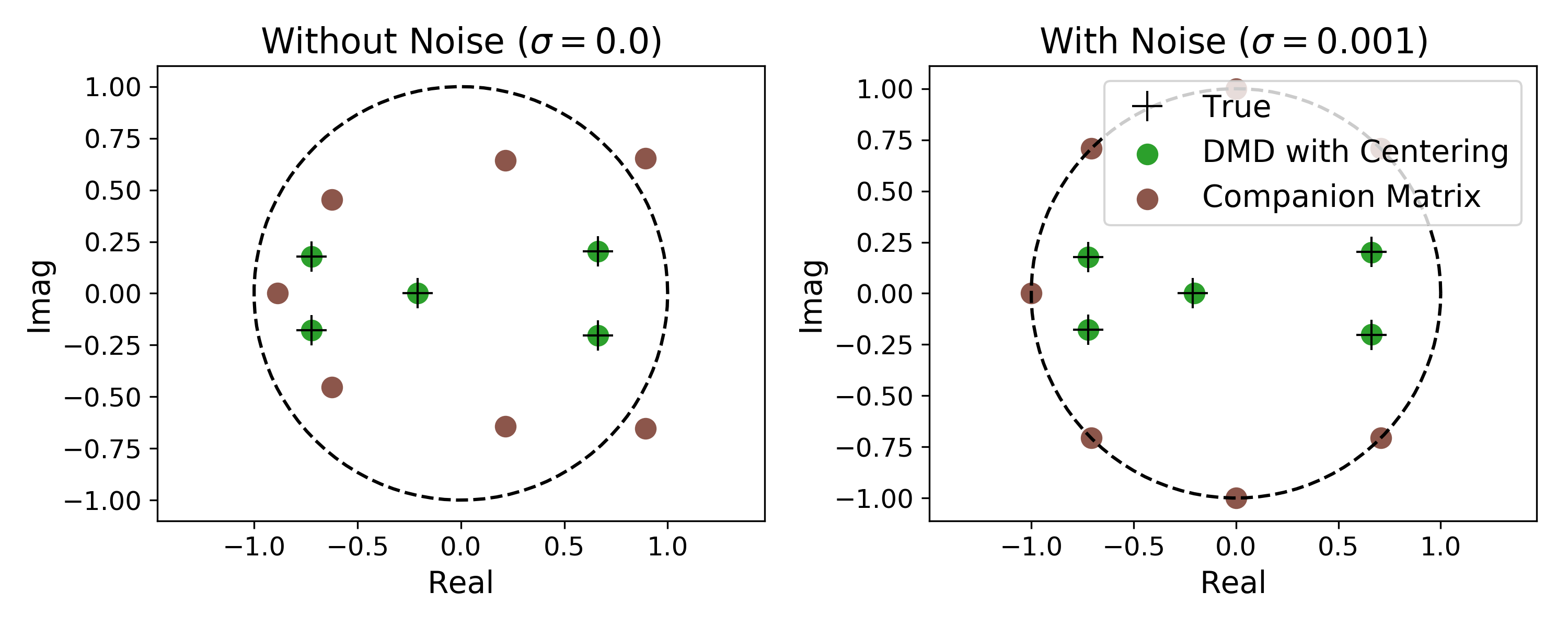}
\put(2,37){a)}
\put(51,37){b)}
\end{overpic}
\caption{Comparison of performance of DMD with centering 
and the companion matrix approach on mean subtracted data. 
a) DMD modes (green) match with the true modes (black crosses).
Since total mean subtracted data 
$\bm{X}_1 - \bm{\mu} \bm{1}^{\intercal}$ is low rank, 
the companion matrix eigenvalues do not equal the true eigenvalues. b) Same system as a) but with added measurement noise. 
DMD with centering yields the correct eigenvalues. 
Since the data is full-rank,
the companion matrix eigenvalues equal the roots of unity. 
However, since the data has low effective rank, 
these modes do not equal the true modes of the system.}
\label{fig:DFT}
\end{figure}





\section{Extracting Arbitrary Frequencies}
\label{sec:arbitraryfrequencies}
By subtracting the means of  $\bm{X}_1$ and $\bm{X}_2$ individually, we have shown we can successfully extract the dynamics about a background mode (corresponding to a DMD mode with eigenvalue $1$). 
We generalize this result to modes with fixed frequencies that correspond to known eigenvalues other than $1$.
As a concrete example, electrical recordings taken in the presence of an alternating current power source are often corrupted with a ``background'' signal at a fixed frequency 
(60 Hz in most countries).
This line noise corresponds to a mode with a precisely known eigenvalue that we want to subtract from the measurements.

To subtract a mode of known frequency, note that in \eqref{eq:affine} the eigenvalue of $1$ comes in through the decision to use $\bm{1}^{\intercal}$. 
By adding this term we enforce that
\begin{equation}
    \bm{1}^{\intercal} = \begin{bmatrix} 1 & 1 \cdots 1\end{bmatrix}, 
    \label{eq:ones}
\end{equation}
appears in the rowspace of the data. We remove this mode by subtracting the mean from the data or equivalently applying the orthogonal projection,
\begin{equation}
    \bm{I} - \frac{\bm{1}\bm{1}^{\intercal}}{\bm{1}^{\intercal} \bm{1}}, 
    \label{eq:orthogonal_projection}
\end{equation}
to $\bm{X}_1$ and $\bm{X}_2$.

If we know that another eigenvalue $\lambda$ exists in the data, then we simply replace \eqref{eq:ones} with
\begin{equation*}
    \bm{\lambda}^{\intercal} = \begin{bmatrix} 1 & \lambda & \lambda^2 & \lambda^3 \cdots \lambda^{T-1} \end{bmatrix}.
\end{equation*}
Thus, \eqref{eq:affine} becomes 
\begin{equation}
    \bm{X}_2 = \bm{A} \bm{X}_1 + \bm{b} \bm{\lambda}^{\intercal}.
    \label{eq:affine_arbitrary}
\end{equation}
Multiplying both sides by $\bm{\lambda}^{T^\dagger} = \frac{\bm{\lambda}^*}{\bm{\lambda}^{\intercal} \bm{\lambda}^*}$, then
\begin{gather*}
    \bm{X}_2 \frac{\bm{\lambda}^*}{\bm{\lambda}^{T^*} \bm{\lambda}} = \bm{b} \frac{\bm{\lambda}^{\intercal} \bm{\lambda}^*}{\bm{\lambda}^{T^*} \bm{\lambda}} + \bm{A} \bm{X}_1 \frac{\bm{\lambda}^*}{\bm{\lambda}^{T^*} \bm{\lambda}} \\
    \bm{b} = \bm{X}_2 \frac{\bm{\lambda}^*}{\bm{\lambda}^{T^*} \bm{\lambda}} - \bm{A} \bm{X}_1 \frac{\bm{\lambda}^*}{\bm{\lambda}^{T^*} \bm{\lambda}}.
\end{gather*}
Plugging this into \eqref{eq:affine_arbitrary}
\begin{equation*}
    \bm{A} \bm{X}_1 \left( \bm{I} - \frac{\bm{\lambda}^* \bm{\lambda}^{\intercal}}{\bm{\lambda}^{\intercal} \bm{\lambda}^*} \right) = \bm{X}_2 \left( \bm{I} - \frac{\bm{\lambda}^* \bm{\lambda}^{\intercal}}{\bm{\lambda}^{\intercal} \bm{\lambda}^*} \right).
\end{equation*}
Thus, solving \eqref{eq:affine_arbitrary} is equivalent to applying the orthogonal projection, $\bm{I} - \frac{\bm{\lambda}^* \bm{\lambda}^{\intercal}}{\bm{\lambda}^{\intercal} \bm{\lambda}^*}$ to the data. 

If there are multiple known distinct eigenvalues $\lambda_1, \ldots, \lambda_k$, then applying the same procedure we construct the matrix 
\begin{equation*}
  \bm{\Lambda}^{\intercal} = \begin{bmatrix} \lambda_1 & \lambda_1^2 & \lambda_1^3 & \cdots & \lambda_1^{\intercal} \\ 
  \lambda_1 & \lambda_1^2 & \lambda_1^3 & \cdots & \lambda_1^T \\
  \vdots & \vdots & \vdots & \ddots & \vdots \\
  \lambda_k & \lambda_k^2 & \lambda_k^3 & \cdots & \lambda_k^T
  \end{bmatrix},
\end{equation*}
and assume that the data satisfies
\begin{equation}
  \bm{X}_2 = \bm{A} \bm{X}_1 + \bm{B} \bm{\Lambda}^{\intercal}.
  \label{eq:big_affine}
\end{equation}
In the case that $k < T$, since $\bm{\Lambda}$ is a Vandermonde matrix it has full column rank (Lemma \ref{lemma:vandermonde}), 
and thus $\bm{\Lambda}^{\dagger} \bm{\Lambda} = \bm{I}$.
Multiplying \eqref{eq:big_affine} by $\bm{\Lambda}^{\intercal^{\dagger}}$, and rearranging terms we get
\begin{equation*}
  \bm{B} = \bm{X}_2 \bm{\Lambda}^{\intercal^{\dagger}} - \bm{A} \bm{X}_1 \bm{\Lambda}^{\intercal^{\dagger}}
\end{equation*}
Plugging this into \eqref{eq:big_affine} yields
\begin{equation*}
  \bm{A} \bm{X}_1 \left( \bm{I} - \bm{\Lambda}^{\intercal^{\dagger}} \bm{\Lambda}^{\intercal} \right) = \bm{X}_2 \left( \bm{I} - \bm{\Lambda}^{\intercal^{\dagger}} \bm{\Lambda}^{\intercal} \right).
\end{equation*}
So, solving \eqref{eq:big_affine} is equivalent to DMD after applying the orthogonal projection $\bm{I} - \bm{\Lambda}^{\intercal^{\dagger}} \bm{\Lambda}^{\intercal}$ to the data.
As an example, we generate data with samples which satisfy,
\begin{equation}
  \bm{x}_{j+1} = \bm{A} \bm{x}_j + \bm{b} \lambda^{j-1} \text{ for } j = 1, \ldots, T+1.
  \label{eq:recursive_affine}
\end{equation}
We choose $n = 10,~T = 9$, $r = 5$ and $\lambda = -i$. The eigenvalues of $\bm{A}$ (black crosses) are shown alongside the eigenvalues computed using DMD without fixing an eigenvalue (orange) and DMD with a fixed eigenvalue (green). As expected, DMD with a fixed eigenvalue extracts out the eigenvalues of $\bm{A}$ while DMD without a fixed eigenvalue includes an additional eigenvalue with value $-i$.

\begin{figure}
    \centering
    \begin{overpic}[unit=1bp,width=\textwidth]{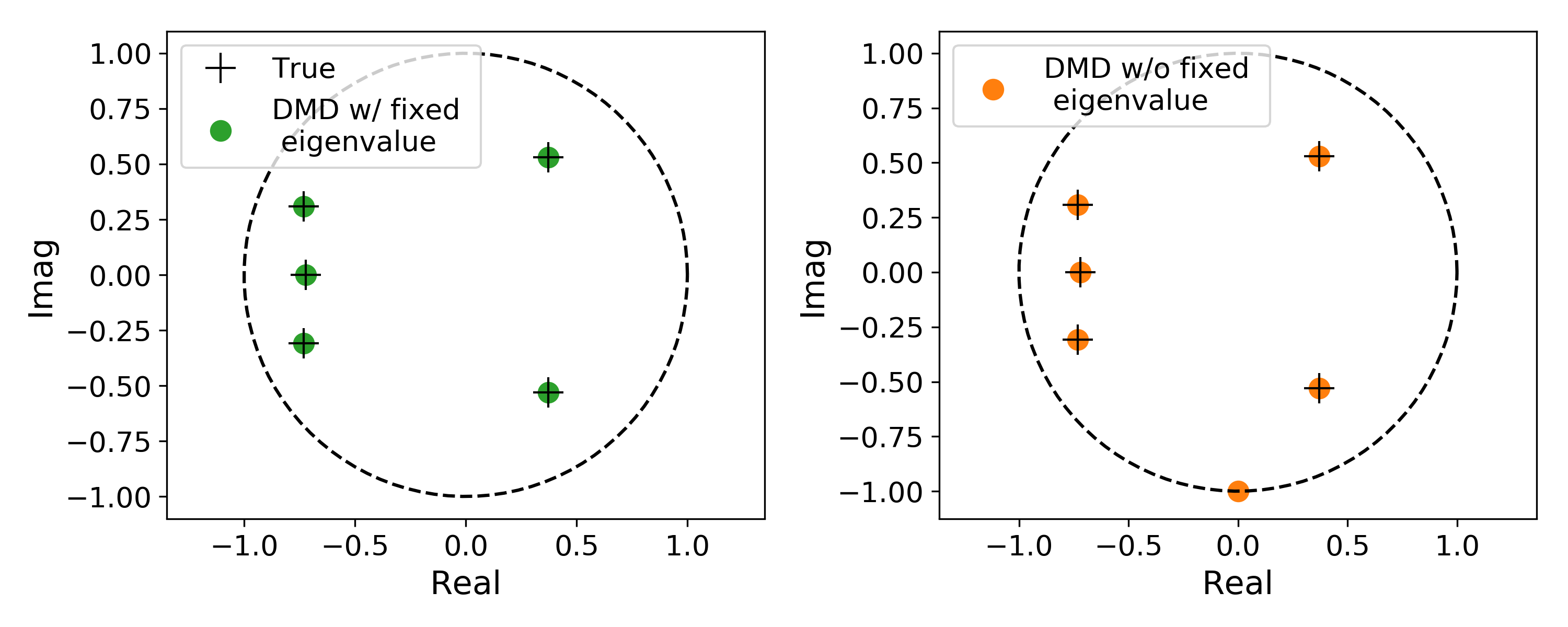}
    \put(2,37){a)}
\put(51,37){b)}
    \end{overpic}
    \caption{Comparison of DMD with fixed 
    frequency subtraction versus ordinary DMD. 
    Data was generated as in \eqref{eq:recursive_affine}, 
    where $\bm{A}$ has six nonzero eigenvalues 
    with one fixed to be $-i$. 
    a) Eigenvalues computed using DMD 
    with fixed frequency subtraction (green) 
    compared to true eigenvalues of $\bm{A}$. 
    b) DMD modes computed without a fixed eigenvalue (orange) 
    compared to true eigenvalues of $\bm{A}$. 
    DMD without fixed frequency subtraction contains the 
    additional eigenvalue equal to $-i$.}
    \label{fig:fixed_eig}
\end{figure}

\section{Examples} \label{sec:examples}

We demonstrate DMD with centering on three nonlinear examples, including one synthetic example and two real-world datasets.
For the Lorenz system, Section~\ref{subsec:lorenz} shows that DMD with centering improves the model of the dynamics, especially in the presence of measurement noise.
Section~\ref{subsec:video} describes the surveillance video example, where the data is effectively low rank; DMD with and without centering extract the same foreground and background modes, as detailed in Section~\ref{sec:comparing_without_centering}.
Last, Section~\ref{subsec:ecog} shows extraction of modes at arbitrary frequencies (Section~\ref{sec:arbitraryfrequencies}) using an example of brain activity recordings contaminated by 60 Hz line noise.

\subsection{Lorenz System} \label{subsec:lorenz}

As an example, we analyze the Lorenz (1963) system~\cite{lorenz1963deterministic} which is defined by the set of differential equations
\begin{align*}
\dot{x}_1 = \sigma (x_2 - x_1) \\
\dot{x}_2 = x_1 \left( \rho - x_3 \right) - x_2 \\
\dot{x}_3 = x_1 x_2 - \beta x_3.
\end{align*}
These equations appear in a variety of systems including, fluid dynamics~\cite{leonov2015homoclinic}, lasers~\cite{weiss1986evidence}, and chemical reactions~\cite{doherty1988chaos}. This system is nonlinear and thus the corresponding data matrix $\bm{X} \in \R^{3 \times T+1}$ has linearly independent rows.

For this analysis, we will focus on applying DMD to a short trajectory that spirals outward from the unstable nonzero fixed point $(\sqrt{\beta (\rho - 1)},\sqrt{\beta (\rho - 1)},\rho - 1)$. We choose to use the common values $\sigma = 10$, $\rho=28$, and $\beta=8/3$. 
We simulate 4800 timepoints using the standard 
Runge-Kutta 4th-order method with fixed timestep $0.001$ and initial condition $\bm{x}_1 = [6.7673,6.1253,25.8706]$.

In Figure \ref{fig:lorenz} we plot the trajectory along 
with the reconstructed trajectories
(forecasts from the initial time)
using DMD with centering 
and DMD without centering.
The corresponding eigenvalues from these two methods are shown on the right. 
DMD with centering and DMD without centering have different eigenvalues in this case. 
However, both methods give similar reconstructions. 
Note that DMD has an eigenvalue very close to 1, 
which indicates that there is a fixed point or nonzero mean in the data.

Next we add Gaussian measurement noise with 
variance $0.03^2$,
which is quite small relative to the variable scales. 
It is well-known that noise shrinks DMD eigenvalues 
towards the origin ~\cite{bagheri2013effects,dawson2016characterizing,hemati2017biasing}. 
For DMD with centering, even though the eigenvalues shrink towards the origin,
the reconstruction is still centered about the fixed point. 
In addition, since two of the DMD with centering eigenvalues remain outside of the unit circle, the reconstructions have the same growing trend as the simulation. 
However, for DMD without centering all of the eigenvalues fall within the unit circle, 
which causes the reconstruction to decay to the origin. So, we conclude that not centering the data can result in drastically different forecasts and estimates of stability.

\begin{figure}
\centering
\begin{overpic}[width=\textwidth,unit=1bp]{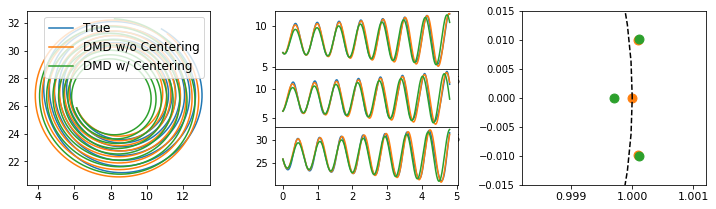}
\put(-1,15){$z$}
\put(34,22){$x$}
\put(34,15){$y$}
\put(34,5){$z$}
\put(64.5,15){Im}
\end{overpic}
\begin{overpic}[scale=0.62,unit=1bp]{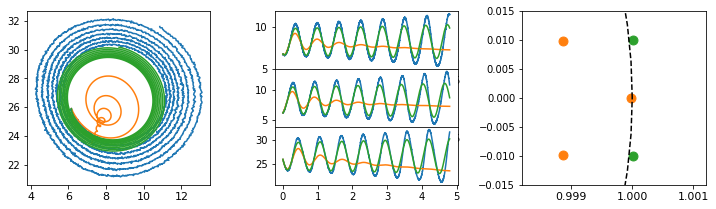}
\put(84,28.5){Re}
\put(50,28.5){$t$}
\put(16,28.5){$y$}
 \put(-1,15){$z$}
\put(16,-0.5){$y$}
\put(34,22){$x$}
\put(34,15){$y$}
\put(34,5){$z$}
\put(50,-0.5){$t$}
\put(64.5,15){Im}
\put(84,-0.5){Re}
\end{overpic}
\caption{Comparison of performance of DMD with and without centering using Lorenz (1963) attractor data. Top left: Reconstruction of $z$ plotted against reconstruction of $y$ for the two methods. 
Top center: Reconstruction of $x$, $y$, and $z$ as a function of $t$ individually using different methods. Both methods produce similar reconstructions. Top right: Eigenvalue spectra for DMD with and without centering. Bottom row: Same as top row except simulation has added Gaussian measurement noise. 
Note that all of the eigenvalues for DMD without centering have magnitude less than one and decay to zero, causing the reconstructed trajectory to decay to zero. However, some of the DMD with centering modes have magnitude greater than one, yielding a better reconstruction. One eigenvalue equal to 0.8866 is not shown for DMD with centering.}
\label{fig:lorenz}
\end{figure}

\subsection{Background Subtraction for Video Surveillance} \label{subsec:video}
Next we analyze an application to video surveillance. Here, we focus on one objective, namely foreground/background separation in video. In particular, we would like to split the data into two pieces: a slowly varying, highly correlated background and a foreground containing moving objects of interest.  Several techniques have been developed to address this objective~\cite{tian2005robust,maddalena2008self,he2012incremental,candes2011robust}. In~\cite{grosek2014dynamic}, Grosek and Kutz show that when DMD is applied to videos with static backgrounds, one of the DMD modes typically has an eigenvalue close to $1$. This mode is a good approximation to the background and consequently the difference in the data and this mode yields an estimate for the foreground. For this type of data, we expect the video to be approximately low rank. Hence, the foreground from Exact DMD should be approximately equal to the mean subtracted data. 
In addition, we expect the background to be given by the fixed point $\bm{c}$, where
\begin{equation*}
  \bm{X}_2 - \bm{c} \bm{1}^{\intercal} = \bar{\bm{A}} (\bm{X}_1 - \bm{c} \bm{1}^{\intercal}).
\end{equation*}
Comparing this to the equation for DMD with centering
\begin{equation*}
  \bm{X}_2 - \bm{\mu}_2 \bm{1}^{\intercal} = \bar{\bm{A}} \left( \bm{X}_1 - \bm{\mu}_1 \bm{1}^{\intercal} \right)
\end{equation*}
we see that 
\begin{equation*}
  \bm{c} - \bm{\bar{A}} \bm{c} = \bm{\mu}_2 - \bar{\bm{A}} \bm{\mu}_1,
\end{equation*}
and hence
\begin{equation*}
  \bm{c} = (\bm{I} - \bar{\bm{A}})^{-1} (\bm{\mu}_2 - \bar{\bm{A}} \bm{\mu}_1).
\end{equation*}
Note that $\bm{I} - \bar{\bm{A}}$ is invertible since $\bar{\bm{A}}$ does not have an eigenvalue equal to $1$. 

We apply these methods to surveillance video of highway traffic from the CDNET dataset \cite{wang2014cdnet}. In this case, the foreground is the cars and the background is the grass, road, trees, etc. In Figure \ref{fig:background} we show a sample frame, the stationary mode from DMD without centering, the fixed point $\bm{c}$ from DMD with centering, and the overall mean of the data. 
The stationary mode and fixed point are visually identical but not equal to the overall mean of the data.
Additionally, as predicted by Theorem \ref{theorem:one_rank_update_1}, 
the spectra for DMD with and without centering are nearly identical except for the presence of the additional eigenvalue equal to $1$ for DMD without centering. 
\begin{figure}
\centering
\begin{overpic}[width=\textwidth,unit=1bp]{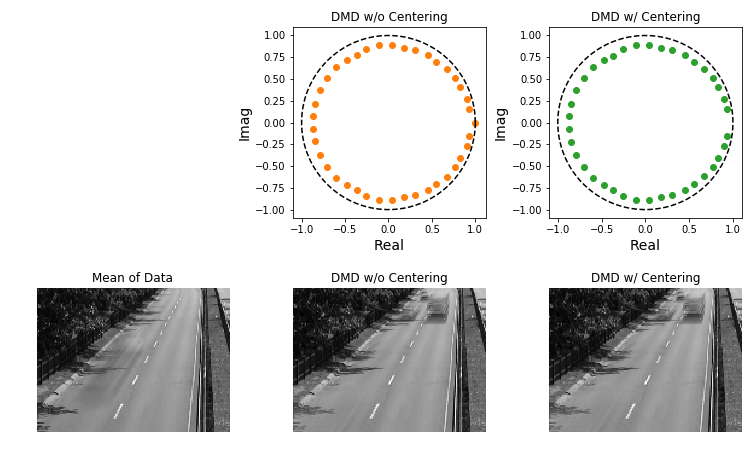}
\put(3,31){\includegraphics[scale=0.11]{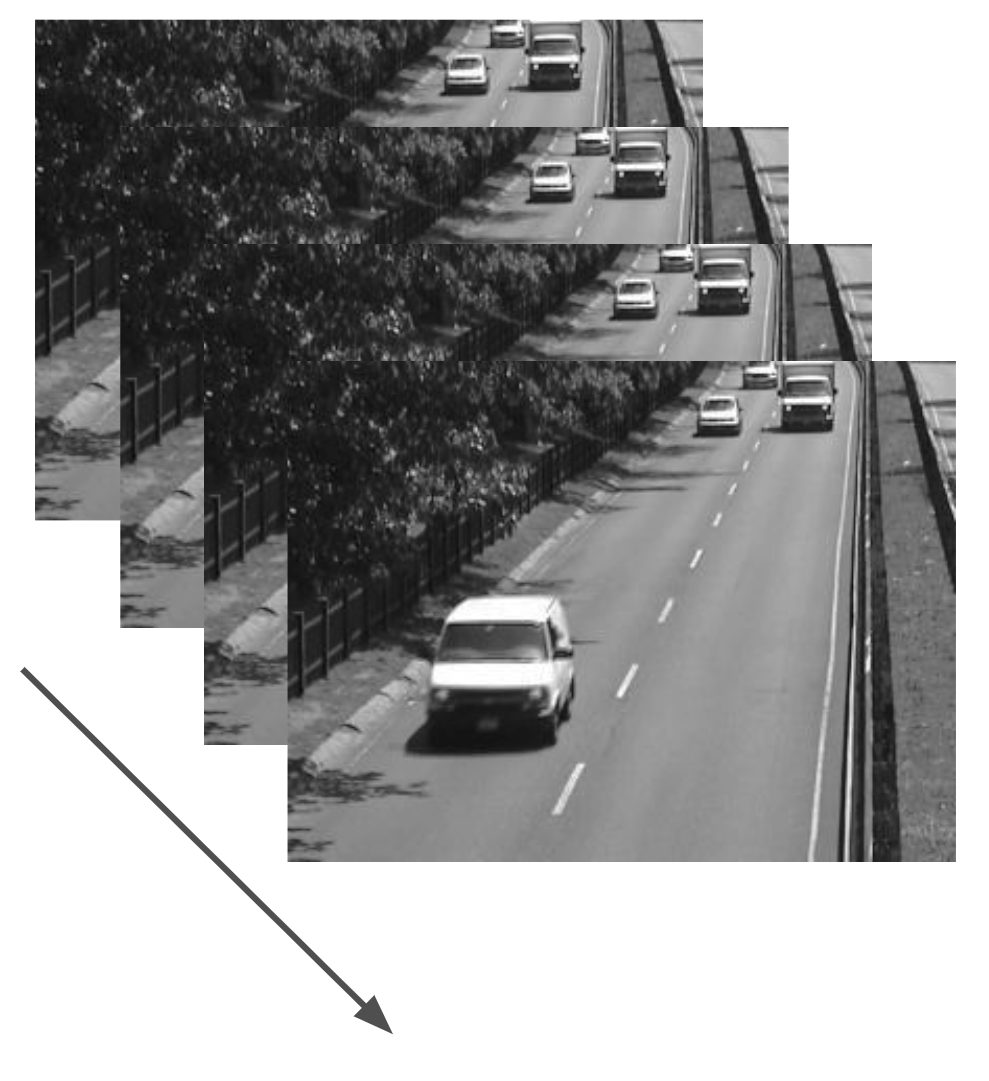}}
\put(6,34){$t$}
\put(0,60){a)}
\put(32,60){c)}
\put(66,60){e)}
\put(0,25){b)}
\put(32,25){d)}
\put(66,25){f)}
\end{overpic}
\caption{a) Sample frames from video of traffic. b) Overall mean $\mu$ of video. c) Eigenvalues of modes computed using DMD and d) static mode corresponding to eigenvalue closest to one. e) Eigenvalues of modes computed using DMD with centering and f) static mode corresponding to fixed point. Note that the spectra for these two methods is nearly identical with the exception of the eigenvalue at $1$ corresponding to the static mode/fixed point.}
\label{fig:background}
\end{figure}

\subsection{Fixed Frequency Subtraction for Brain Activity Recordings} \label{subsec:ecog}
As a final example, we study an application of our methods to brain activity recordings. In particular, we study intracranial electrocorticography (ECoG) measurements from electrodes placed on a human brain surface \cite{wang2018ajile}. 
The data we use contain 
64 channels of measurements 
and are sampled for a duration of 5 seconds 
with a frequency of 1000 Hz.

One common source of signal pollution is 60 Hz power line hum, which results from the AC current in power lines \cite{bai2004adjustable,lee2005design}. 
Since the frequency we would like to remove is known precisely, we may apply the results of Section \ref{sec:arbitraryfrequencies} to denoise the signal.
In the top left of Figure \ref{fig:ecog}, we plot the subset of these channels. 
The corresponding power spectra computed using the temporal DFT 
and DMD are shown in the middle left and bottom left plots,
respectively. 
As expected, there is a distinct peak near 60 Hz in both of these plots. On the right we show the corresponding plots after applying the fixed frequency subraction at 60 Hz. 
In the power spectrum we see that the peak near 60 Hz is suppressed by an order of magnitude. 
In addition, the mode near 60 Hz in the DMD spectrum is completely removed.
Surprisingly, even though the original mode is not at exactly
60 Hz, frequency subtraction is able to remove it.
\begin{figure}
\centering
\begin{overpic}[width=\textwidth,unit=1bp]{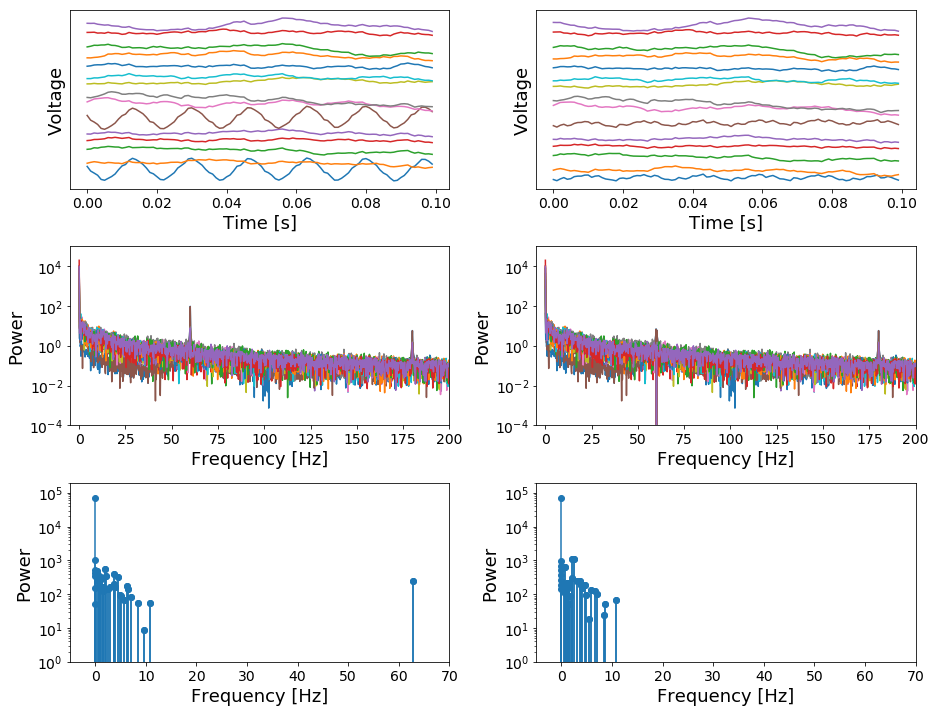}
\end{overpic}
\caption{Application of fixed frequency subtraction to brain activity recordings
. Top left: Raw voltage signals from subset of channels. Center left: Corresponding discrete Fourier transform power spectrum. Bottom left: Power spectrum computed using DMD. Right: Same as left column after fixed frequency of 60 Hz has been subtracted.}
\label{fig:ecog}
\end{figure}

\section{Discussions}

In this paper, we have proposed mean subtraction as a natural and computionally efficient preprocessing step when performing DMD. 
We have shown that DMD on mean subtracted data is equivalent to an additional affine term in the DMD framework, but is not equivalent to a temporal discrete Fourier transform (temporal DFT). 
In addition, we showed that, in a special subset of cases, DMD without centering extracts the same spectra as DMD with centering. 
However, in the case where the data are full rank, DMD with centering can extract the underlying dynamics even when DMD without centering cannot. 
By thinking of centering the data as subtracting a zero-frequency mode, we generalized this result to extracting non-zero, known frequencies in the data. 
Finally, we illustrated DMD with centering on 
three real examples with nonlinear dynamics, 
namely a trajectory of the Lorenz system, 
a surveillance video, and brain recordings.




Many of the Theorems in this work have depended on the assumption of sequential time series sampled at a fixed frequency. 
In particular, the uniqueness of the DMD modes (Theorem~\ref{theorem:uniqueness}) is based on this assumption. 
However, exact DMD has been shown to successfully extract modes from data that is not sequential. 
One potential starting point is the theory of exponential Vandermonde matrices~\cite{robbin2000exponential,yang2001generalization}. 
It remains to be demonstrated that the modes extracted for non-sequential times data by exact DMD, or similar methods such as optimized DMD, are well-posed and unique.
Furthermore, future work remains to more thoroughly explore the 
effects of noise on the DMD estimator and obtain a
fully statistical theory.



Our analysis in this paper has focused on computing DMD 
by what is known as the (SVD-based) 
exact DMD algorithm~\cite{tu2013dynamic}.
There exist many other algorithms for computing the DMD, 
including forward/backward DMD~\cite{dawson2016characterizing}, 
total least squares DMD~\cite{hemati2017biasing}, 
and optimized DMD~\cite{askham2018variable}.
Although we suggest that data centering is generally advantageous,
the consequences of centering remains to be explicitly 
characterized when using these other algorithms.


\section*{Acknowledgments}
We are grateful for discussions with S. H. Singh and S. L. Brunton; and to S. M. Peterson for help with the brain recording dataset. 
This work was funded by the Air Force Office of Scientific Research (FA9550-17-1-0329 to JNK); the Air Force Research Lab (FA8651-16-1-0003 to BWB); the National Science Foundation (award 1514556 to BWB); the Alfred P. Sloan Foundation and the Washington Research Foundation to BWB.
KDH was supported by a Washington Research Foundation Postdoctoral Fellowship.

\bibliography{centered.bib} 
\bibliographystyle{siamplain}

\appendix
\section{Rank one Update}
\label{appendix:rank_one_update}
Here we will derive \eqref{eq:rank_one_update} and \eqref{eq:rank_one_update_2}. Namely we will show that if $\bm{X}_2 = \bm{A} \bm{X}_1$ then 
\begin{equation}
 \bar{\bm{X}}_1^{\dagger} = 
  \begin{cases}
  \bm{X}_1^{\dagger} 
  \left( \bm{I} - \frac{\bm{n} \bm{n}^{\intercal} }{\bm{n}^{\intercal} \bm{n} } \right) & \text{if }  (\bm{I} - \bm{X}_1^{\dagger} \bm{X}_1)^{\intercal} \bm{1} = \bm{0} \\
  \left( \bm{I} - \frac{\left( \bm{I} - \bm{X}_1^{\dagger} \bm{X}_1 \right) \bm{1} \bm{1}^{\intercal} }{\bm{1}^{\intercal} \left( \bm{I} - \bm{X}_1^{\dagger} \bm{X}_1 \right) \bm{1} } \right) \bm{X}_1^{\dagger} & \text{ otherwise }
  \end{cases}
  \label{eq:x1_pseudoinverse}
\end{equation}
where $\bm{n} = \bm{X}_1^{\dagger^{\intercal}} \bm{1}$.
To derive this we use the rank-one update formula (3.2.7) from \cite{petersen2008matrix} to compute $\bar{\bm{X}}_1^{\dagger} = \left( \bm{X}_1 - \bm{\mu}_1 \bm{1}^{\intercal} \right)^{\dagger}$. 

First, let's assume $\left( \bm{I} - \bm{X}_1^{\dagger} \bm{X}_1 \right) \bm{1} = \bm{0}$. Letting $A = \bm{X}_1$, $c = -\bm{\mu}_1$, and $d = \bm{1}$, then
\begin{gather*}
    \beta = 1 - \bm{1}^{\intercal} \bm{X}_1^{\dagger} \bm{\mu}_1 = 1 - \frac{\bm{1}^{\intercal} \bm{X}_1^{\dagger} \bm{X}_1 \bm{1}}{\bm{1}^{\intercal} \bm{1}} = 0 \\
    \bm{w} = - (\bm{I} - \bm{X}_1 \bm{X}_1^{\dagger}) \frac{\bm{X}_1 \bm{1}}{\bm{1}^{\intercal} \bm{1}} = \bm{0} \\
    \bm{m} = (\bm{I} - \bm{X}_1^{\dagger} \bm{X}_1)^{\intercal} \bm{1} = \left( \bm{1}^{\intercal} - \bm{1}^{\intercal} \bm{X}_1^{\dagger} \bm{X}_1 \right)^{\intercal} = \bm{0} \\
    \bm{v} = -\bm{X}_1^{\dagger} \bm{\mu}_1 = -\bm{X}_1^{\dagger} \frac{\bm{X}_1 \bm{1}}{\bm{1}^{\intercal} \bm{1}} = -\frac{\bm{1}}{\bm{1}^{\intercal} \bm{1}} \\
    \bm{n} = \bm{X}_1^{\dagger^{\intercal}} \bm{1} 
\end{gather*}
Note that $\norm{\bm{v}}^2 = \frac{1}{\bm{1}^{\intercal} \bm{1}}$. Since $\beta = \norm{\bm{m}} = \norm{\bm{w}} = 0$, we are in Case $6$ and the pseudoinverse is given by 
\begin{equation*}
     \bar{\bm{X}}_1^{\dagger} = \bm{X}_1^{\dagger} -\frac{1}{\norm{\bm{v}}^2} \bm{v} \bm{v}^{\intercal} \bm{X}_1^{\dagger} - \frac{1}{\norm{\bm{n}}^2} \bm{X}_1^{\dagger} \bm{n} \bm{n}^{\intercal} + \frac{\bm{v}^{\intercal} \bm{X}_1^{\dagger} \bm{n}}{\norm{\bm{v}}^2 \bm{\norm{n}}^2} \bm{v} \bm{n}^{\intercal} 
\end{equation*}

Noting that in first term $\frac{\bm{v}^{\intercal} \bm{X}_1^{\dagger}}{\norm{\bm{v}}^2} = \bm{n}^{\intercal}$ and in the third term $\bm{v}^{\intercal} \bm{X}_1^{\dagger} \bm{n} = \frac{\bm{1}^{\intercal} \bm{X}_1^{\dagger} \bm{X}_1^{\dagger^{\intercal}} \bm{1}}{\bm{1}^{\intercal} \bm{1}} = \norm{\bm{n}}^2  \norm{\bm{v}}^2$,
 then the first and third terms equal $-\bm{v} \bm{n}^{\intercal}$ and $\bm{v} \bm{n}^{\intercal}$. These cancel, yielding the first case of \eqref{eq:x1_pseudoinverse}.
 Thus, by Theorem 2.1 in \cite{ding2007eigenvalues}, $\hat{\bm{A}}$ and $\A$ share all the same eigenvalues and eigenvectors except the eigenvalue of $\hat{\bm{A}}$ equal to $1$ which becomes
 \begin{equation*}
     1 - \frac{\bm{n}^{\intercal} \bm{X}_2 \bm{X}_1^{\dagger} \bm{n}}{\norm{\bm{n}}^2} = 1 - \frac{\bm{n}^{\intercal} \hat{\bm{A}} \bm{n}}{\norm{\bm{n}}^2} = 1 - \frac{\bm{n}^{\intercal} \bm{n}}{\norm{\bm{n}}^2} = 0.
\end{equation*}

Now, let's assume $\bm{1}^{\intercal} \left( \bm{I} - \bm{X}_1^{\dagger} \bm{X}_1 \right) \neq \bm{0}$. This corresponds to Case 3 in \cite{petersen2008matrix}.
\begin{gather*}
  \bm{w} = -\left( \bm{I} - \bm{X}_1 \bm{X}_1^{\dagger} \right) \frac{\bm{X}_1 \bm{1}}{\bm{1}^{\intercal} \bm{1}} = \bm{0} \\
  \bm{m} = \left( \bm{I} - \bm{X}_1^{\dagger} \bm{X}_1 \right)^{\intercal} \bm{1} \neq \bm{0} \\
  \beta = 1 - \frac{\bm{1}^{\intercal} \bm{X}_1^{\dagger} \bm{X}_1 \bm{1}}{\bm{1}^{\intercal} \bm{1}} = \frac{\norm{\bm{m}}^2}{\bm{1}^{\intercal} \bm{1}} \neq 0 \\
  \bm{v} = -\frac{\bm{X}_1^{\dagger} \bm{X}_1 \bm{1}}{\bm{1}^{\intercal} \bm{1}} \\
  \bm{n} = \bm{X}_1^{\dagger^{\intercal}} \bm{1}
\end{gather*}
\begin{equation*}
  \begin{split}
  \bar{\bm{X}}_1^{\dagger} &= \bm{X}_1^{\dagger} + \frac{1}{\beta} \bm{m} \bm{v}^{\intercal} \bm{X}_1^{\dagger} - \frac{\beta}{\norm{\bm{v}}^2 \norm{\bm{m}}^2 + \abs{\beta}^2} \left( \frac{\norm{\bm{v}}^2}{\beta} \bm{m} + \bm{v} \right) \left( \frac{\norm{\bm{m}}^2}{\beta}\left( \bm{X}_1^{\dagger} \right)^{\intercal} \bm{v} + \bm{n} \right)^{\intercal}
  \end{split}
\end{equation*}
Now 
\begin{equation*}
 \frac{\norm{\bm{m}}^2}{\beta} \left( \bm{X}_1^{\dagger} \right)^{\intercal} \bm{v} + \bm{n}  = -\bm{1}^{\intercal} \bm{1} \bm{X}_1^{\dagger^{\intercal}} \bm{X}_1^{\dagger} \frac{\bm{X}_1 \bm{1}}{\bm{1}^{\intercal} \bm{1}} + \bm{X}_1^{\dagger^{\intercal}} \bm{1} = \bm{0} \\
\end{equation*}
since $\bm{X}_1^{\dagger} \bm{X}_1$ is symmetric. Hence,
\begin{equation*}
\begin{split}
  \bar{\bm{X}}_1^{\dagger} &= \bm{X}_1^{\dagger} +  \frac{1}{\beta} \bm{m} \bm{v}^{\intercal} \bm{X}_1^{\dagger} \\
  &= \left( \bm{I} - \frac{\left( \bm{I} - \bm{X}_1^{\dagger} \bm{X}_1 \right) \bm{1} \bm{1}^{\intercal} }{\bm{1}^{\intercal} \left( \bm{I} - \bm{X}_1^{\dagger} \bm{X}_1 \right) \bm{1} } \right) \bm{X}_1^{\dagger}.
\end{split}
\end{equation*}

\end{document}